\definecolor{dred}{rgb}{.8,0.2,.2}
\definecolor{ddred}{rgb}{.8,0.5,.5}
\definecolor{dblue}{rgb}{.2,0.2,.5}
\definecolor{myurlcolor}{rgb}{0.6,0,0}
\definecolor{mycitecolor}{rgb}{0,0,0.8}
\definecolor{myrefcolor}{rgb}{0,0,0.8}
\DeclareRobustCommand{\todo}[1]{
\ifthenelse{\boolean{show_comments}}
{\begingroup\color{dred}{[\textbf{TODO:} #1]}\endgroup}
{}
}
\DeclareRobustCommand{\nick}[1]{
\ifthenelse{\boolean{show_comments}}
{\begingroup\color{Blue}{[\textbf{Nick:} #1]}\endgroup}
{}
}
\DeclareRobustCommand{\francesco}[1]{
\ifthenelse{\boolean{show_comments}}
{\begingroup\color{Green}{[\textbf{Francesco} #1]}\endgroup}
{}
}
\numberwithin{equation}{section}
\newtheorem{proposition}{Proposition}[section]
\newtheorem{theorem}[proposition]{Theorem}
\newtheorem{lemma}[proposition]{Lemma}
\newtheorem{corollary}[proposition]{Corollary}
\theoremstyle{definition}
\newtheorem{definition}[proposition]{Definition}
\newtheorem{remark}[proposition]{Remark}
\newtheorem{example}[proposition]{Example}
\def\expandafter\details\expandafter{\details\small}
\newenvironment{proofof}[1]{\smallskip\noindent{\emph{Proof~of~#1.}}%
  \hspace{1pt}}{\hspace{-5pt}{\nobreak\quad\nobreak\hfill\nobreak%
    $\square$\vspace{2pt}\par}\smallskip\goodbreak}
\newcommand{\spt}{\mathop{\rm spt}}
\newcommand{\co}{\mathop{\rm co}}
\newcommand{\tr}{\mathop{\rm tr}}
\newcommand{\id}{\mathop{\rm id}}
\newcommand{\dom}{\mathop{\rm dom}}
\newcommand{\lip}{\mathop{\rm Lip}}
\newcommand{\esssup}{\mathop{\rm ess\,sup}}
\newcommand{\T}{\intercal}
\newcommand{\Lip}{\mathrm{Lip}}
\newcommand{\R}{\mathbb{R}}
\newcommand{\opnorm}{\@ifstar\@opnorms\@opnorm}
\newcommand{\@opnorms}[1]{%
  \left|\mkern-1.5mu\left|\mkern-1.5mu\left|
   #1
  \right|\mkern-1.5mu\right|\mkern-1.5mu\right|
}
\newcommand{\@opnorm}[2][]{%
  \mathopen{#1|\mkern-1.5mu#1|\mkern-1.5mu#1|}
  #2
  \mathclose{#1|\mkern-1.5mu#1|\mkern-1.5mu#1|}
}
\newcommand{\eps}{\varepsilon}
\begin{document}

\title[Trajectory stabilization of nonlocal continuity equations]{Trajectory stabilization of nonlocal continuity equations by localized controls}

\author{Nikolay Pogodaev${}^\dagger$}
\address{${}^\dagger$Dipartimento di Matematica, Università degli Studi di Padova}
\email{nikolay.pogodaev@unipd.it}
\author{Francesco Rossi${}^\ddagger$}
\address{${}^\ddagger$Dipartimento di Culture del Progetto, Universtà Iuav di Venezia. The Author is a member of G.N.A.M.P.A. (I.N.d.A.M.).}
\email{francesco.rossi@iuav.it}

\keywords{stabilization of Partial Differential Equations, continuity equation, localized control, non-local operators}
\subjclass[2000]{93C20, 93D20, 35Q93}

\begin{abstract} We discuss stabilization around trajectories of the continuity equation with nonlocal vector fields, where the control is localized, i.e., it acts on a fixed subset of the configuration space.

  We first show that the correct definition of stabilization is the following: given an initial error of order $\eps$, measured in Wasserstein distance, one can improve the final error to an order \( \varepsilon^{1+\kappa} \) with $\kappa>0$.

  We then prove the main result: assuming that the trajectory crosses the subset of control action, stabilization can be achieved. The key problem lies in regularity issues: the reference trajectory needs to be absolutely continuous, while the initial state to be stabilized needs to be realized by a small Lipschitz perturbation or being in a very small neighborhood of it.

\end{abstract}

\maketitle

\section{Introduction}

In recent years, the study of systems describing a crowd of interacting autonomous agents
has drawn a great interest from the mathematical and control communities.
A better understanding of such interaction phenomena can have a strong impact in several key applications, such as road traffic and egress problems for pedestrians. For a few reviews about this topic, see e.g.
\cite{axelrod,camazine,CPTbook,helbing,SepulchreReview}.

Clearly, there is a wealth of different mathematical models available to describe crowds. In this article, we use one of the most popular methods in the mathematical community: the continuity equation with non-local velocity, that is,
\begin{equation}
  \label{eq:conteq-uncontrolled}
\partial_t\mu_t+\nabla\cdot \left(\left(V_t(x,\mu_t)\right)\mu_t\right) = 0.
\end{equation}
Here, the crowd is described by a measure $\mu_t$ evolving in time, according to the action of the vector field $V_t(x,\mu_t)$. The key feature of the model is exactly non-locality: the vector field $V$ at point $x$ depends on the whole distribution $\mu_t$, not only on the value of its density at $x$. Analysis of such equation is by now well-established, together with efficient numerical methods and issues related to singular limits, see e.g. \cite{piccoliTransportEquationNonlocal2013,AGS05,Coclite20231205,Keimer2023183,Camilli20187213}.

Beside the description of interactions, it is now relevant to study problems of crowd control, i.e., of controlling such systems by acting on few agents, or on a small subset of the configuration space.
Roughly speaking, basic problems for such models include controllability (i.e., reaching a desired configuration), optimal control (i.e., the minimization of a given functional) and stabilization (i.e., counteract perturbations to stay around a given configuration/trajectory). Many results for different contexts can be found in \cite{blaq,PRT15,DMRcontrol,DMRmin,FS,carmona,CPRT17,bullo,achdou1,CHM,fornasierMeanfieldOptimalControl2018,Chertovskih2023,Ciampa2021185}.

The standard setup for a control system in crowd modeling is as follows:
\begin{equation}
  \label{eq:conteq}
\partial_t\mu_t+\nabla\cdot \left(\left(V_t(x,\mu_t)+u_t(x)\right)\mu_t\right) = 0,
\end{equation}
where \( u_t \) is an external vector field with additive action.

In this article, we focus on the problem of \emph{stabilization around trajectories}. A rough description of the problem is as follows:
\begin{tcolorbox}
Given a reference trajectory $\mu_t$ of \eqref{eq:conteq-uncontrolled} on a time interval $[0,T]$ and a measure $\varrho$, which is \emph{close} to the initial state \( \mu_0=\varrho_{0} \), can we find an admissible control $u$ such that the corresponding solution \( \mu^{u}_t \) of \eqref{eq:conteq} issuing from \( \varrho \) comes \emph{very close} to $\mu_t$, at time \( T \)?
\end{tcolorbox}
The intuitive concept of stabilization is illustrated in Figure~\ref{fig:paths}, while writing its rigorous definition requires some work: we need to give, at least, a precise meaning to the words \emph{close} and \emph{very close}. Moreover, the class of admissible controls need to be precisely set as well.

\begin{figure}[htb]
  \begin{center}
    \includegraphics[width=0.65\textwidth]{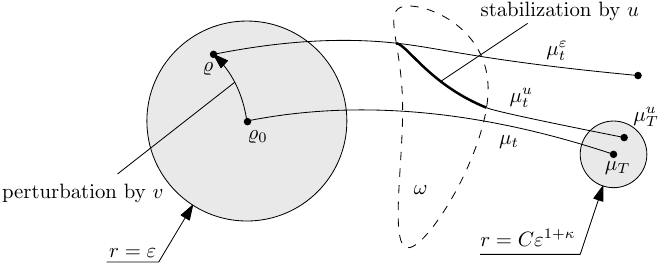}
    \caption{
      The concept of stabilization: we perturb \( \varrho_0 \) by a Lipschitz control \( v \). Our goal is to neutralize this perturbation with another Lipschitz control acting inside \( \omega \) only.
    }
\label{fig:paths}
\end{center}
\end{figure}

We start by choosing a class of admissible controls.
In the present article, this role will be played by \emph{Lipschitz localized} controls, i.e., nonautonomous vector fields that are globally Lipschitz on the \( x \)-space and which support is contained in a fixed non-empty open subset \( \omega \) of \( \mathbb{R}^d \).
More precisely, the family of \emph{admissible controls} is given by
\begin{equation}
  \label{eq:CL}
u\in L^\infty\left([0,T];\Lip(\mathbb{R}^d;\mathbb{R}^{d})\right),\quad \spt(u_{t})\subset \omega,\quad t\in [0,T].
\end{equation}
The corresponding solutions of~\eqref{eq:conteq} will be called \emph{admissible trajectories}.

The nonlocal vector field \( V \) will have similar regularity properties, i.e, global Lipschitz continuity both in \( x \) and \( \mu \), which will be specified in Assumption \( (A_1) \) below.
Such regularity assumption for both $V$ and $u$ is very natural, since it ensures existence and uniqueness of solutions for~\eqref{eq:conteq}, see \cite{ambrosioTransportEquationCauchy2008}.
Moreover, it provides an ``explicit'' formula for the solution: 
\begin{equation}
\label{e-sol}
\mu_t=\Phi_{0,t\sharp}\varrho_{0},
\end{equation}
where \( \Phi_{0,t} \) is the flow of \( V+u \) and \( \sharp \) denotes the pushforward operator (see Section~\ref{sec:wass} below).

The advantage of this formulation is counterbalanced by the very fact that the solution is given by the pushforward of the flow. Indeed, the flow is an homeomorphism, then \eqref{e-sol} shows a form of rigidity in moving from $\varrho_0$ to $\mu_t$. For example, it has the following consequences:
\begin{enumerate}[(a)]
\item The sets \( \spt (\varrho_0) \) and \( \spt(\mu_t) \) are homeomorphic (e.g. one cannot create or destroy holes).
    \item If \( \spt(\varrho_0) \) is compact, then \( \spt (\mu_t) \) is also compact.
    \item If \( \varrho_0 \) has atoms, then \( \mu_t \) has the same atoms, usually  displaced (atoms cannot be split, merged, or removed).
\end{enumerate}
These observations already show that exact stabilization (i.e., exactly reaching the reference trajectory) is usually not possible, as it would require a homeomorphism between initial states $\varrho_0$ and $\varrho$. 
A similar discussion can be found in \cite{DMRcontrol} about exact controllability.
As a possible solution, in this article we will only focus on approximate stabilization, i.e., on getting \emph{very close} to $\mu_t$.
However, the ability to deform \( \varrho \) into \( \varrho_0 \) will play an important part in our discussion.

The second feature of admissible controls is the localization property, i.e., the fact that $\spt(u_t)\subset \omega$.
Clearly, in this context, one can act on a portion of mass only if it crosses $\omega$ at some time.
This condition will be made precise in Assumption $(A_2)$ below.


We now focus on the state space, i.e., the space in which the solutions of \eqref{eq:conteq} are defined.
Here our main choice will be \( \mathcal{P}_c(\mathbb{R}^d) \), the space of Borel probability measures on \( \mathbb{R}^d \) having compact supports.
The restriction to compactly supported measures is reasonable from the modeling point of view. Occasionally, we will restrict the state space even more, by considering only the space \( \mathcal{P}_c^{ac}(\mathbb{R}^d) \) of absolutely continuous measures in \( \mathcal{P}_c(\mathbb{R}^d) \). Moreover, both \( \mathcal{P}_c(\mathbb{R}^d) \) and  \( \mathcal{P}_c^{ac}(\mathbb{R}^d) \) are invariant sets of~\eqref{eq:conteq}, due to the representation~\eqref{e-sol}.

Given the state space, we need to endow it with a topology, that is necessary for dealing with approximate stabilization.
Moreover, we need to quantify stabilization, to make sense of the words ``near'' and ``very near''.
Our choice is to endow $\mathcal{P}_c(\R^d)$ with the (quadratic) Wasserstein distance $\mathcal{W}_2$, that is associated with optimal transportation \cite{villaniTopicsOptimalTransportation2003,zbMATH05306371}.
It is now very clear that such distance is very suitable to describe solutions of continuity equations with non-local velocities \cite{A04,ambrosioHamiltonianODEsWasserstein2008,PR13,gigliGeometrySpaceProbability2004}, eventually with control \cite{PRT15,DMRcontrol,DMRmin}.
We will recall the definition of Wasserstein distance in Section \ref{sec:wass}.
We just mention here that on compact sets it metrizes the weak convergence of measures, which is the natural topology for distributional solutions of \eqref{eq:conteq}.
We are now ready to give a proper definition of $\kappa$-stabilization.

  \begin{definition}  Let $A\subset \mathcal{P}_c (\mathbb{R}^d)$ be a set of measures, considered as initial conditions for \eqref{eq:conteq}. Let the trajectory \( \mu_t \) of~\eqref{eq:conteq} on the time interval $[0,T]$ corresponding to the initial condition \( \mu_0=\varrho_0 \) and the zero control \( u=0 \) be called \emph{the reference trajectory}.

    We say that the set $A\subset \mathcal{P}_c(\mathbb{R}^d)$ is \( \kappa \)-\emph{stabilized around the reference trajectory \( \mu_t \)} of \eqref{eq:conteq} if there exists \( C>0 \) such that for any \( \varepsilon>0 \) and $\varrho\in A$ with \( \mathcal{W}_2(\varrho,\varrho_0)< \varepsilon \) one can find an admissible control $u$ such that the corresponding trajectory $\mu_t^u$ of \eqref{eq:conteq} starting from $\varrho$ satisfies
  \begin{equation}
    \label{eq:stabilization}
     \mathcal{W}_2\left(\mu^u_{T},\mu_{T}\right)< C\varepsilon^{1+\kappa}.
   \end{equation}
   In other words, whatever \( \varepsilon \) we choose, any point of the set \( A\cap{\bf B}_{\varepsilon}(\varrho_0) \) can be steered into the ball \( {\bf B}_{C\varepsilon^{1+\kappa}}(\mu_{T}) \) by an admissible control, see~Fig~\ref{fig:paths}.
   Here \( {\bf B}_\varepsilon(\varrho_0) \) denotes the open Wasserstein ball of radius \( \varepsilon \) centered at \( \varrho_0 \).
 \end{definition}

 \begin{remark}
   Here we collect several known stabilization results.
   \begin{enumerate}[1.]
     \item The whole space \( \mathcal{P}_c(\mathbb{R}^d) \) is trivially \( 0 \)-stabilized, i.e., an initial error of order \( \varepsilon \) keeps being of the same order.
    It is indeed sufficient to choose \( u=0 \) and apply Proposition~\ref{prop:basic} below.

   \item In two particular cases some information about exact stabilization can be derived from the controllability results we presented in~\cite{DMRcontrol}.
   \begin{description}
     \item[Case 1] Let \( V \) only depend on \( x \) and let \( \varrho_0\in \mathcal{P}_c^{ac}(\mathbb{R}^d) \).
      Let Assumptions \( (A_{1,2}) \) below hold.
      Then, there exist a dense subset \( A \) of \( \mathcal{P}_c(\mathbb{R}^d) \) and \( \varepsilon_{*}>0 \) such that \( A\cap {\bf B}_{\varepsilon_{*}}(\varrho_0) \) can be approximately stabilized with order $\kappa$ for any $\kappa>0$.

     \item[Case 2] Let \( u \) act on the whole space, i.e., \( \omega=\mathbb{R}^d \), and \( \varrho_0\in \mathcal{P}^{ac}_c(\mathbb{R}^d) \).
      Let Assumptions \( (A_{1,2}) \) below hold.
    Also in this case, there exists a dense subset \( A\) of \( \mathcal{P}_c(\mathbb{R}^d) \) that can be approximately stabilized with order $\kappa$ for any $\kappa>0$.
   \end{description}
   \end{enumerate}
 \end{remark}

 Note that in the first case, we dropped the \emph{nonlocality assumption} for \( V \), while in the second case, we dropped the \emph{localization assumption} for \( u \).
 Combining both these assumptions poses a real challenge, as they are somewhat contradictory.
 Indeed, if \( V \) is nonlocal (i.e., depending on \( \mu \)), then any change in the mass inside \( \omega \) will affect the mass that has already crossed \( \omega \).
 But we have no option to counterbalance this effect, because \( u \) can act inside \( \omega \) only! In particular, this is the main reason why techniques from~\cite{DMRcontrol,DMRmin} cannot be applied here.


We now describe sets that can be \( \kappa \)-stabilized.
One may hope that the ball \( A={\bf B}_{\varepsilon_{*}}(\varrho_0) \) has this property, as soon as \( \varepsilon_{*} \) is small enough.
However, as we shall see in Section~\ref{s-sharp}, this is not true.
The reason is that \( {\bf B}_{\varepsilon_{*}}(\varrho_0) \) contains atomic measures.
A second hope would then to deal with $A={\bf B}_{\varepsilon_{*}}(\varrho_0)\cap\mathcal{P}^{ac}_c(\R^d)$, i.e., to impose the absolute continuity assumption.
Also in this setting, $\kappa$-stabilization is not achieved.
Indeed, here \( A \) contains absolutely continuous measures that are arbitrarily close to atomic measures, hence it is arbitrarily hard to send them close to the reference trajectory.

In order to proceed, we consider the problem with a different approach.
We start by perturbing the initial measure by a Lipschitz vector field \( v \) during the time interval \( [0,\varepsilon_0] \).
Can we neutralize this perturbation by applying a Lipschitz localized control?
To study this question, we define, for any \( \varepsilon_*>0 \), the following set:
\begin{equation}
  \label{eq:Gamma}
  \Gamma_{\varepsilon_*}(\varrho_0):= \left\{
      \Phi^v_{0,t\sharp}\varrho_0\,\colon \Phi^v \text{ is the flow of } v\in L^{\infty}\!\left([0,\varepsilon_*]; \Lip(\mathbb{R}^{d};\mathbb{R}^d)\right)\!
    \text{ with } \|v\|_{\infty}\le 1
     \right\}.
   \end{equation}
   Here, we endow \( \Lip(\mathbb{R}^d;\mathbb{R}^d) \) with the standard Lipschitz norm \( \|f\|_{\lip} = \max\{\|f\|_{\infty},\lip(f)\} \) making it a Banach space; \( \|f\|_{\infty} \) stands for the standard \( \sup  \)-norm and \( \lip(f) \) denotes the minimal Lipschitz constant of \( f \).
The norm of an admissible control \( v \) is defined by
\[
\|v\|_{\infty} := \esssup_{t\in[0,T]}\|v_t\|_{\lip}.
\]

Basically, the idea is to take a Lipschitz control \( v \), whose norm is at most \( 1 \), and use it to deform $\varrho_0$ on the time interval $[0,\varepsilon_*]$.
The properties of flows recalled above ensure that the set is contained in the closed Wasserstein ball $\overline{\bf B}_{\varepsilon_*}(\varrho_0)$.
We will show that \(\Gamma_{\varepsilon_*}(\varrho_0)\) is $\kappa$-stabilizable, if \( \varepsilon_* \) is sufficiently small.

\subsection{Main result} In this section, we state the key assumptions and the main results.

We first  precisely state the assumptions imposed on the vector field \( V \), the initial measure \(\varrho_0\) and on the action set \( \omega \).

\begin{tcolorbox}
\textbf{Assumption \((A_1)\):} The nonlocal time-dependent vector field  \( V\colon [0,T]\times \mathbb{R}^d \times \mathcal{P}_2(\mathbb{R}^{d})\to \mathbb{R}^d \) satisfies
\begin{itemize}
	\item \( V \) is measurable in \( t \);
	\item \( V \) is bounded and Lipschitz in \( x \) and \( \mu \), i.e., there exists \( M > 0 \) such that
	    \begin{gather*}
        \left|V_t(x,\mu)\right|\le M,\quad
	\left|V_{t}(x,\mu)-V_{t}(x',\mu')\right|\le M\left(|x-x'|+ \mathcal{W}_{2}(\mu,\mu')\right),
	  \end{gather*}
		for all \(x,x'\in \mathbb{R}^d \), \( \mu,\mu'\in \mathcal{P}_2(\mathbb{R}^d) \), \(t\in [0,T] \).
\end{itemize}
\end{tcolorbox}
This assumption, which dates back at least to~\cite{piccoliTransportEquationNonlocal2013}, ensures existence and uniqueness of solutions of the associated continuity equation \eqref{eq:conteq-uncontrolled}. Given an initial condition $\mu_0$, we denote with $\mu_t$ the corresponding (uncontrolled) solution and with \( \Phi^V \) the flow of the corresponding \emph{nonautonomous} vector field \( (t,x)\mapsto V_t(x,\mu_{t}) \).

We now set the geometric condition on $\omega$, the set on which the control is localized.
\begin{tcolorbox}
\textbf{Assumption \((A_2)\):} Given the initial datum of the reference trajectory $\varrho_0$, for any \( x\in \spt (\varrho_0) \) there exists \( t\in (0,T) \) such that \( \Phi_{0,t}(x)\in \omega \).
\end{tcolorbox}
This assumption is illustrated in Figure~\ref{f-geometric}.
As stated above, this condition is very natural, since it requires the reference trajectory to cross the set in which the localized control actually operates.
We highlight that this condition is a property of the reference trajectory and not of its perturbations.

\begin{figure}[htb]
  \begin{center}
    \includegraphics[width=0.55\textwidth]{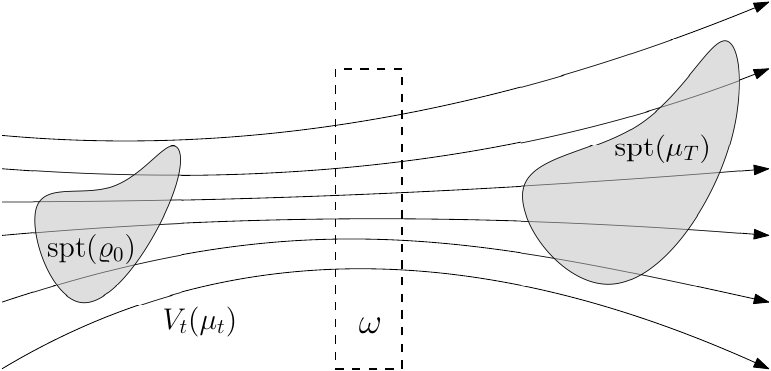}
\caption{Geometric Condition \( (A_2) \):
any trajectory of \( V_t(\mu_t) \) issuing from \( \spt(\varrho_0) \) crosses \( \omega \) by the time \( T \).}
\label{f-geometric}
\end{center}
\end{figure}

We are now ready to state the main result of the paper.

\begin{theorem}
  \label{thm:main}
  Let a nonlocal vector field \( V \), an open set \( \omega\subset \mathbb{R}^d \) and a measure \( \varrho_0\in \mathcal{P}_c(\mathbb{R}^d)\) satisfy Assumptions \( (A_{1,2}) \).
  Then, there exist $\kappa_{*},\varepsilon_{*}>0$ such that the set \( \Gamma_{\varepsilon_{*}}(\varrho_0) \) is \( \kappa \)-stabilizable around the reference trajectory $\mu_t$ of equation~\eqref{eq:conteq}, for any \( \kappa\in [0,\kappa_{*}) \).

  The numbers \( \kappa_{*} \), \( \varepsilon_{*} \) as well as \( C \) from the definition of \( \kappa \)-stabilizability only depend on \( T,M,\omega,\spt(\varrho_0)\).
\end{theorem}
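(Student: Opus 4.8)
The plan is to reduce the stabilization problem to a controllability-type statement and to exploit the structure of $\Gamma_{\eps_*}(\varrho_0)$: every $\varrho\in\Gamma_{\eps_*}(\varrho_0)$ is of the form $\Phi^v_{0,s\sharp}\varrho_0$ for some Lipschitz $v$ with $\|v\|_\infty\le 1$ and some $s\le\eps_*$. The key observation is that this representation gives us not only $\mathcal{W}_2(\varrho,\varrho_0)<\eps$ but an \emph{explicit Lipschitz map} $\Phi^v_{0,s}$ carrying $\varrho_0$ to $\varrho$, whose distance from the identity is controlled by $s\le\eps_*$ and whose Lipschitz constant is controlled by $e^{s}\le e^{\eps_*}$. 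First I would quantify this: using Grönwall estimates for the flow of $v$, show that $\lip(\Phi^v_{0,s}-\id)\le C_1 s$ and $\|\Phi^v_{0,s}-\id\|_\infty\le s$, so in fact $\mathcal{W}_2(\varrho,\varrho_0)$ and the "size" of the perturbing diffeomorphism are comparable up to constants depending only on $T,M$. This is what makes $\Gamma_{\eps_*}$ the right class: a generic absolutely continuous measure $\eps$-close to $\varrho_0$ carries no such Lipschitz deformation of controlled norm, which is exactly the obstruction flagged in the introduction.

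Next I would build the stabilizing control in two stages. Let $t_0\in(0,T)$ be a time such that, by $(A_2)$ and compactness of $\spt(\varrho_0)$, the whole support $\Phi_{0,t_0}(\spt(\varrho_0))$ — or a suitable portion of mass accounting for it — lies well inside $\omega$ (one needs a short sub-interval $[t_0,t_0+\delta]$ during which the relevant mass stays in $\omega$; here is where uniform continuity of the reference flow and a covering argument enter). On $[0,t_0]$ we apply $u=0$, so $\mu^u_t=\Phi^{V+v}_{?}\ldots$ — more precisely the perturbed trajectory issuing from $\varrho$ — and by the stability estimate for nonlocal continuity equations (Proposition~\ref{prop:basic}, Grönwall in $\mathcal{W}_2$) we get $\mathcal{W}_2(\mu^u_{t_0},\mu_{t_0})\le e^{LT}\mathcal{W}_2(\varrho,\varrho_0)\le C_2\eps$, and the map transporting $\mu_{t_0}$ to $\mu^u_{t_0}$ is still a Lipschitz perturbation of the identity of size $\lesssim\eps$, obtained by conjugating $\Phi^v_{0,s}$ with the (Lipschitz) reference flow. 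On $[t_0,t_0+\delta]$ we then choose $u_t$ supported in $\omega$ that, on the support of the relevant mass, pushes $\mu^u$ back onto $\mu$: concretely, let $\psi=\Phi_{0,t_0}\circ(\Phi^v_{0,s})^{-1}\circ\Phi_{0,t_0}^{-1}$ be the Lipschitz map with $\psi_\sharp\mu^u_{t_0}=\mu_{t_0}$, interpolate linearly between $\id$ and $\psi$ over $[t_0,t_0+\delta]$, cut it off smoothly to be supported in $\omega$, and take $u$ to be the generating velocity field of this isotopy (minus the contribution of $V$, which can be absorbed since $V$ is Lipschitz). The resulting $u$ is in the admissible class $\eqref{eq:CL}$ with $\|u\|_\infty\lesssim\eps/\delta$.

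The subtle point — and the main obstacle — is the nonlocality of $V$: after we push $\mu^u$ back towards $\mu$ inside $\omega$ on $[t_0,t_0+\delta]$, the nonlocal field $V_t(\cdot,\mu^u_t)$ differs from $V_t(\cdot,\mu_t)$ by $O(\mathcal{W}_2(\mu^u_t,\mu_t))$, which feeds back on the mass that has \emph{already left} $\omega$ and cannot be corrected. So exact cancellation at time $t_0+\delta$ is impossible; instead I would run a Grönwall/bootstrap estimate on $e(t):=\mathcal{W}_2(\mu^u_t,\mu_t)$ on $[t_0,t_0+\delta]$: the correction term decreases $e$ at rate $\sim e(t_0)/\delta$ while the nonlocal coupling increases it at rate $\le M\,e(t)$, giving $e(t_0+\delta)\le C_3\,\delta\,e(t_0)+$ (uncorrectable residual), and the residual is itself $\le C_4\, e(t_0)^2$ after optimizing — here the square comes from the fact that the nonlocal discrepancy is second-order in the error over a short window, since both the field difference and the duration it acts over scale with $\eps$. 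Then on $[t_0+\delta,T]$ we set $u=0$ again and propagate: $\mathcal{W}_2(\mu^u_T,\mu_T)\le e^{M(T-t_0-\delta)}\,e(t_0+\delta)\le C\,\eps^{1+\kappa}$ with $\kappa$ determined by choosing $\delta=\delta(\eps)$ to balance the $\delta\,\eps$ and $\eps^{2}$ terms (e.g. $\delta\sim\eps$ gives $\kappa$ up to $1$, but the admissibility $\|u\|_\infty<\infty$ and the requirement that the mass remains in $\omega$ throughout $[t_0,t_0+\delta]$ force $\delta$ bounded below, hence only $\kappa<\kappa_*$ for some $\kappa_*>0$). Tracking all constants, $\kappa_*,\eps_*,C$ depend solely on $T,M,\omega,\spt(\varrho_0)$, as claimed. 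The genuinely delicate estimates are: (i) constructing the cutoff isotopy inside $\omega$ while keeping Lipschitz bounds, and (ii) the bootstrap showing the nonlocal residual is genuinely superlinear in $\eps$ — without that, one only recovers $\kappa=0$.
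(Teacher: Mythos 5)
The proposal does not match the paper's approach and, as written, has two genuine gaps.

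\textbf{Gap 1: there is no common crossing time.} Your construction relies on a single correction burst on a window \( [t_0,t_0+\delta] \) during which ``the whole support \( \Phi_{0,t_0}(\spt(\varrho_0)) \) ... lies well inside \( \omega \).'' But Assumption \( (A_2) \) only guarantees that \emph{each} trajectory \( t\mapsto \Phi_{0,t}(x) \) crosses \( \omega \) at \emph{some} time \( t(x)\in(0,T) \), and these times can be disjoint for different \( x \). Lemma~\ref{lem:common_time} of the paper makes exactly this point: every trajectory stays inside a compact \( \omega_0\subset\omega \) for a time interval of \emph{length} \( r/M \), but there is no uniform interval. A single burst cannot touch the mass that has not yet reached (or has already left) \( \omega \). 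The covering argument you gesture at would fragment the correction into finitely many bursts at different times, and after each burst the nonlocal field \( V_t(\cdot,\mu^u_t) \) changes for \emph{all} mass, so the already-corrected pieces drift away again; this feedback loop is precisely what the paper's continuous, cut-off control \( u_t(x)=\alpha(x)(Y_t\circ X_t^{-1}(x)-x) \) is designed to control uniformly in time. Replacing the burst by a feedback that is active throughout \( [0,T] \) and is merely localized via \( \alpha \) with \( \spt\alpha\subset\omega \) is not a cosmetic change; it is what makes the argument compatible with \( (A_2) \).

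\textbf{Gap 2: the \( \varepsilon^{1+\kappa} \) gain is not justified.} You write that the uncorrectable nonlocal residual is \( O(e(t_0)^2)=O(\varepsilon^2) \) because ``both the field difference and the duration it acts over scale with \( \varepsilon \),'' but two sentences later you concede that the requirement that the mass stay in \( \omega \) throughout \( [t_0,t_0+\delta] \) ``forces \( \delta \) bounded below.'' These statements are incompatible: with \( \delta \) bounded below by a geometric constant, the nonlocal field mismatch \( O(\mathcal{W}_2(\mu^u_t,\mu_t))=O(\varepsilon) \) accumulated over a window of length \( \delta\ge\delta_0>0 \) gives a residual of order \( \varepsilon \), i.e.\ only \( \kappa=0 \). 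There is no parameter left to optimize, so the quadratic gain does not follow. The actual mechanism in the paper is different: the cut-off amplitude is taken to be \( a=-\kappa M\log\varepsilon/r \), so that the cumulative contraction factor over the time (of length at least \( r/M \)) each trajectory spends in \( \omega_0 \) is \( e^{-ar/M}=\varepsilon^\kappa \); combined with the initial \( O(\varepsilon) \) error this gives \( O(\varepsilon^{1+\kappa}) \). The cap \( \kappa<\kappa_*=r/(TM) \) is not a by-product of an optimization over \( \delta \) but comes from the requirement that the controlled flow map \( X_t \) stay a bi-Lipschitz homeomorphism (so that the feedback is a genuine, admissible Lipschitz vector field); this is the content of Lemma~\ref{lem:Xinv} and it is the technical heart of the proof. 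Your proposal does not address the invertibility of the controlled flow at all, which is a missing step even if the two gaps above were repaired.

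Two smaller remarks. The transport map you write, \( \psi=\Phi_{0,t_0}\circ(\Phi^v_{0,s})^{-1}\circ\Phi_{0,t_0}^{-1} \), uses the reference nonlocal flow on both sides, but the perturbed measure evolves under the nonlocal flow associated with the \emph{other} initial datum; the correct conjugation mixes two different nonlocal flows, and although their difference is \( O(\varepsilon) \), this needs to be tracked. Finally, you correctly identify that \( \Gamma_{\varepsilon_*}(\varrho_0) \) is the right class because its elements come with an explicit Lipschitz deformation from \( \varrho_0 \); the paper formalizes this via the regular perturbations of Definition~\ref{def:reg_pert}, and, in particular, the positive semi-definiteness of \( D\Psi_\varepsilon \) (which your Grönwall estimate does not yield directly, but Lemma~\ref{lem:flows} does) is used crucially in Lemma~\ref{lem:Xinv}.
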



A careful look at the structure of the stabilized set shows the main limitation of this result, since $\Gamma_{\varepsilon}(\varrho_0)$ is a very small subset of the Wasserstein ball $\overline{\bf B}_{\varepsilon}(\varrho_0)$.
Indeed, if $\varrho_0$ is composed of $N$ atoms, then $\Gamma_{\varepsilon}(\varrho_0)$ is finite-dimensional, since it always can be considered as a subset of $\R^{Nd}$.
If \( \varrho_0 \) is absolutely continuous, then \( \Gamma_\varepsilon(\varrho_0) \) is infinite dimensional, but still small: we will prove in Proposition \ref{p-nondense} below that \( \Gamma_\varepsilon(\varrho_0) \) is a compact, nowhere dense subset of \( \mathcal{P}_c^{ac}(\mathbb{R}^{d})\cap\overline{\bf B}_{\varepsilon}(\varrho_0)  \).

For this reason, we improve the main theorem by building a small neighborhood around $\Gamma_{\varepsilon}(\varrho_0)$ with respect to the Wasserstein distance.

\begin{corollary}
  \label{cor:main}
  Let all the assumptions of Theorem~\ref{thm:main} hold.
  Let \( \kappa_{*} \) and \( \varepsilon_{*} \) be as in Theorem~\ref{thm:main} and \( r(\varepsilon,\kappa) = -\varepsilon^{1+\kappa}/\log \varepsilon \).
  Then, for any \( \varepsilon\in (0,\varepsilon_{*}) \) and \( \kappa\in(0,\kappa_{*}) \), each point $\varrho\in{\bf B}_{r(\varepsilon,\kappa)}(\Gamma_{\varepsilon}(\varrho_0))$ can be steered by an admissible control into the ball \( {\bf B}_{C_1\varepsilon^{1+\kappa}}(\mu_T) \), where the constant \( C_1 \)
  only depends on \( T \), \( M \), \( \omega \), \( \spt(\varrho_0) \).
\end{corollary}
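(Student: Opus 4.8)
The plan is to bootstrap the conclusion of Theorem~\ref{thm:main} from a single point of $\Gamma_{\varepsilon}(\varrho_0)$ to a whole Wasserstein neighbourhood of it, by running the \emph{same} stabilizing control on a nearby initial datum and controlling the divergence of the two trajectories. Fix $\varepsilon\in(0,\varepsilon_{*})$, $\kappa\in(0,\kappa_{*})$ and $\varrho\in{\bf B}_{r(\varepsilon,\kappa)}(\Gamma_{\varepsilon}(\varrho_0))$, and pick $\varrho'\in\Gamma_{\varepsilon}(\varrho_0)$ with $\mathcal{W}_2(\varrho,\varrho')<r(\varepsilon,\kappa)$. First I would record two elementary inclusions: $\Gamma_{\varepsilon}(\varrho_0)\subseteq\Gamma_{\varepsilon_{*}}(\varrho_0)$ (extend any competitor field on $[0,\varepsilon]$ by zero to $[0,\varepsilon_{*}]$, which preserves $\|v\|_{\infty}\le1$), and $\Gamma_{\varepsilon}(\varrho_0)\subseteq\overline{\bf B}_{\varepsilon}(\varrho_0)$ (a field of sup-norm $\le1$ acting for time $\le\varepsilon$ displaces points by at most $\varepsilon$). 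Hence $\varrho'$ belongs to the set stabilized by Theorem~\ref{thm:main} and satisfies $\mathcal{W}_2(\varrho',\varrho_0)\le\varepsilon$, so the theorem furnishes an admissible control $u$, supported in $\omega$, whose trajectory $\mu^{u}_{t}$ of \eqref{eq:conteq} issuing from $\varrho'$ obeys $\mathcal{W}_2(\mu^{u}_{T},\mu_T)<C\varepsilon^{1+\kappa}$ (applying the definition of $\kappa$-stabilizability with radius $2\varepsilon$ upgrades $\mathcal{W}_2(\varrho',\varrho_0)\le\varepsilon$ to a strict inequality at the cost of a harmless factor in $C$).

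Then I would feed the very same $u$ to the initial datum $\varrho$, call $\widetilde{\mu}^{u}_{t}$ the resulting trajectory of \eqref{eq:conteq}, and estimate the drift $\mathcal{W}_2(\widetilde{\mu}^{u}_{T},\mu^{u}_{T})$. Under Assumption $(A_1)$ and the representation formula \eqref{e-sol}, the standard Gronwall estimate for nonlocal continuity equations (of the kind underlying Proposition~\ref{prop:basic}) gives $\mathcal{W}_2(\widetilde{\mu}^{u}_{t},\mu^{u}_{t})\le e^{c(M+\lip(u))t}\,\mathcal{W}_2(\varrho,\varrho')$ for a universal $c$, with $\lip(u)=\esssup_{t}\lip(u_{t})$. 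What remains is purely quantitative: one must extract from the proof of Theorem~\ref{thm:main} that the stabilizing control can be taken with $\lip(u)$ growing slowly enough in $\varepsilon$; the very shape of $r(\varepsilon,\kappa)=-\varepsilon^{1+\kappa}/\log\varepsilon$ is consistent with $\lip(u)=O(\log\lvert\log\varepsilon\rvert)$, equivalently $e^{c(M+\lip(u))T}=O(\lvert\log\varepsilon\rvert)$. Granting this,
\[
\mathcal{W}_2\big(\widetilde{\mu}^{u}_{T},\mu^{u}_{T}\big)\le e^{c(M+\lip(u))T}\,r(\varepsilon,\kappa)\le C_{2}\,\varepsilon^{1+\kappa},
\]
with $C_{2}$ depending only on $T,M,\omega,\spt(\varrho_0)$, and the triangle inequality gives $\mathcal{W}_2(\widetilde{\mu}^{u}_{T},\mu_T)\le(C_{2}+C)\varepsilon^{1+\kappa}=:C_{1}\varepsilon^{1+\kappa}$, which is the claim; all constants depend only on $T,M,\omega,\spt(\varrho_0)$.

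I expect the real obstacle to be exactly the balancing in the previous paragraph: one cannot treat $u$ as a black box but must track, inside the proof of Theorem~\ref{thm:main}, how strong the corrective control is forced to be as $\varepsilon\to0$, and then calibrate the neighbourhood radius so that the exponential trajectory-divergence factor is swallowed by $r$ --- which is precisely why $r$ must shrink by an extra logarithmic factor below $\varepsilon^{1+\kappa}$ rather than being a plain constant multiple of it. A secondary, milder point is to check that applying the control $u$ --- tuned to $\varrho'$ --- to the slightly different datum $\varrho$ does not wreck the geometric crossing of $\omega$ that drives the construction; this should be automatic, since Assumption $(A_2)$ concerns the reference trajectory and the perturbations in play are of size $r(\varepsilon,\kappa)\to0$.
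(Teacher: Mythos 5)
Your overall plan---pick $\varrho'\in\Gamma_{\varepsilon}(\varrho_0)$ close to $\varrho$, steer $\varrho'$ with the control from Theorem~\ref{thm:main}, run the same control from $\varrho$, and bound the drift---matches the shape of the paper's argument, and you correctly diagnose where the real content lies. But the quantitative handle you propose is the wrong one, and the bridge you leave open (``granting this'') is not merely a computation to carry out: the claim $\lip(u)=O(\log|\log\varepsilon|)$ is almost certainly false. Look at the explicit control constructed in the proof: $u_{t}(x)=\alpha(x)\bigl(Y_{t}\circ X_{t}^{-1}(x)-x\bigr)$, where the cut-off $\alpha$ is chosen with amplitude $a=-\kappa M\log\varepsilon/r$ and Lipschitz constant $a/r$, both of size $O(-\log\varepsilon)$. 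Since the factor $Y_{t}\circ X_{t}^{-1}-\id$ has sup-norm $O(\varepsilon)$ but Lipschitz constant of order one (it is built from $DY_{t}$, $P_{t}^{-1}$, $Q_{t}^{-1}$, none of which degenerate), the product rule forces $\lip(u_{t})=O(-\log\varepsilon)$, not $O(\log|\log\varepsilon|)$. Plugged into your crude Gr\"onwall bound $e^{c(M+\lip(u))T}$, this produces a factor $\varepsilon^{-cT}$, polynomial in $1/\varepsilon$, which is not remotely absorbed by $r(\varepsilon,\kappa)=-\varepsilon^{1+\kappa}/\log\varepsilon$.

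The paper avoids this by never passing through $\lip(u)$: it estimates the Lipschitz constant of the flow map $X_{T}$ \emph{directly} from the factorization $DX_{t}=Q_{t}P_{t}$ established in Lemma~\ref{lem:Xinv}, obtaining $\lip(X_{T})\le O(-\log\varepsilon)$ (inequality~\eqref{eq:lipX}). The crucial point is that $\lip(X_{T})$ can be much smaller than $e^{T\lip(V+u)}$ because the control is built to be \emph{damping}: $\|Q_{t}\|$ stays $O(1)$ and $\|P_{t}\|$ only accrues the single logarithm coming from $\int_{0}^{t}\alpha e^{-\int_{s}^{t}\alpha}$. Gr\"onwall, applied to the worst-case Lipschitz constant of the driving field, cannot see this cancellation. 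So the fix is not to re-derive a better bound on $\lip(u)$, but to replace the quantity you are tracking: you should lift estimate~\eqref{eq:keyestim}, $\mathcal{W}_{2}(X_{T\sharp}\nu_{0},X_{T\sharp}\nu_{1})\le O(-\log\varepsilon)\,\mathcal{W}_{2}(\nu_{0},\nu_{1})$, out of the proof of the theorem and combine it with your triangle inequality. With that substitution the bookkeeping you set up goes through exactly as you intended, $O(-\log\varepsilon)\cdot r(\varepsilon,\kappa)=O(\varepsilon^{1+\kappa})$, and the constant $C_{1}$ inherits the stated dependences.
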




\begin{figure}[htb]
  \begin{center}
    \includegraphics[width=0.4\textwidth]{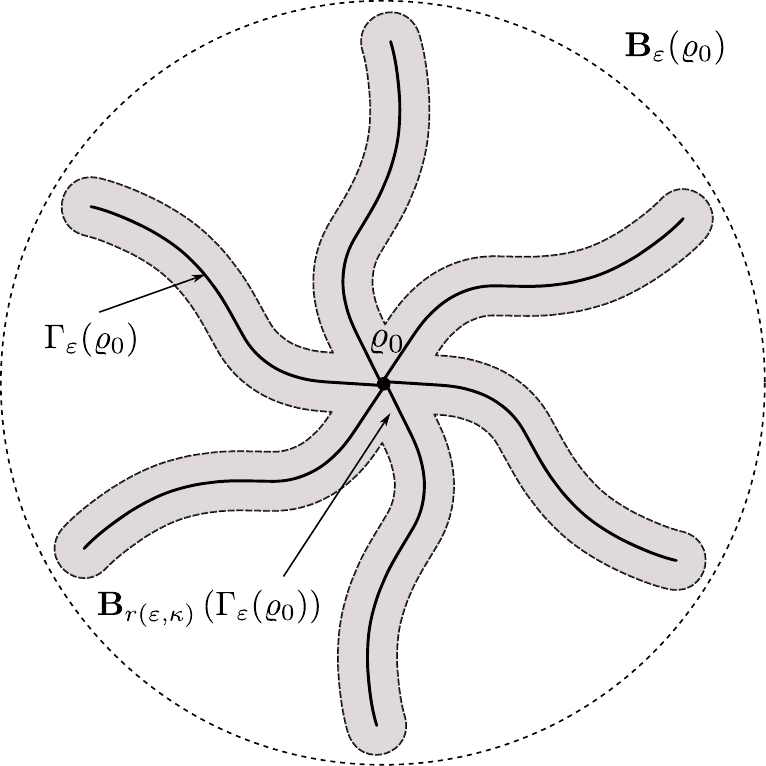}
    \caption{The set \( \Gamma_{\varepsilon}(\varrho_0) \) (black curves) and its enlargement \( {\bf B}_{r(\varepsilon,\kappa)}\left(\Gamma_{\varepsilon}(\varrho_0)\right) \) (grey region) inside \( {\bf B}_{\varepsilon}(\varrho_0) \).}
\label{fig:tree}
\end{center}
\end{figure}


One of the by-products of our result is that it provides some information about controllability (in reversed time). Indeed, consider the dynamics with reversed vector field $-V$ and localized Lipschitz controls: the reachable set starting from a very small neighborhood of \( \mu_T \) contains the set \(  \Gamma_{\varepsilon_{*}}(\varrho_0) \). We plan to explore this interesting possibility in future researches.

We now briefly discuss the rate of convergence.  We do not know if the estimate \( O(\varepsilon^{1+\kappa}) \) can be improved to \( \kappa>\kappa_{*} \).
The control \( u^{\varepsilon} \) constructed in the proof does not guarantee better results. This improvement might be of interest for future researches.


The paper is organized as follows:
Section~\ref{sec:aux} collects notations, as well as some known but useful results, including basic properties of nonlocal flows and bi-Lipschitz homeomorphisms.
The necessary information regarding the geometric structure of the Wasserstein space is provided in Section~\ref{sec:wass}.
Finally, Section~\ref{sec:proof} contains the proof of Theorem~\ref{thm:main} and a discussion about its sharpness.

\section{Ordinary differential equations and bi-Lipschitz homeomorphisms}
\label{sec:aux}
In this section, we first fix the notation. We then recall some basic properties of Ordinary Differential Equations (ODEs from now on) and give a criterion for checking that a map \( f\colon \mathbb{R}^d\to \mathbb{R}^d \) is a bi-Lipschitz homeomorphism.

\subsection{Notation}
We introduce some basic notation:

\begin{longtable}{p{.1\textwidth} p{.9\textwidth}}
  \( \langle x,y\rangle\) &\(\sum_{i=1}^dx_iy_i \), the Euclidean inner product on \( \mathbb{R}^{d} \)\\
  \( |x|\) &\(\sqrt{\langle x,x\rangle}  \), the Euclidean norm on \( \mathbb{R}^{d} \)\\
  \(\id\) & the identity map \(\id\colon \mathbb{R}^d\to \mathbb{R}^d\);\\
  \( \mathcal{L}(\mathbb{R}^d;\mathbb{R}^d) \) & the space of \( d \times d \) matrices or linear maps \( A\colon \mathbb{R}^d\to \mathbb{R}^d \)\\
  \( C(X;Y) \) & the space of continuous maps \( f\colon X\to Y \) between metric spaces \( X \), \( Y \)\\
  \( \Lip(X;Y) \) & the space of bounded Lipschitz maps \( f\colon X\to Y \)\\
  \( L^p(I;X) \) & the Lebesgue space of integrable maps (equivalence classes) \( f\colon I\to X \), \( p \geq 1 \)\\
  \( \langle A,B\rangle\) & \(\tr(A^{\T}B)= \sum_{i,j=1}^da_{ij}b_{ij}   \), the Frobenius inner product on \( \mathcal{L}(\mathbb{R}^{d};\mathbb{R}^{d}) \)\\
  \( \|A\| \) &\(\sqrt{\langle A,B\rangle}   \), the Frobenius norm on \( \mathcal{L}(\mathbb{R}^{d};\mathbb{R}^{d}) \)\\
	\( \|f\|_{\infty}\)&\(\esssup_{x \in \mathbb{R}^{d} }|f(x)|  \), the \( \sup \)-norm on \( L^{\infty}(\mathbb{R}^d;\mathbb{R}^d) \) or \( C(\mathbb{R}^d;\mathbb{R}^d) \)\\
  \( \lip (f) \) & \( \sup_{x \ne y }\frac{|f(x)-f(y)|}{|x-y|}  \), the minimal Lipschitz constant of \( f\colon \mathbb{R}^d\to \mathbb{R}^d \)\\
 \( \|f\|_{\lip} \) & \( \max\left\{\|f\|_{\infty},\lip(f)\right\}  \), the norm on \( \lip(\mathbb{R}^d;\mathbb{R}^d) \)\\
  \( \overline{B}_{r}(A)\) &\(\left\{x\;\colon\; \inf_{y\in A}|x-y|\le r\right\}\), the closed ball of radius \( r>0 \) around \( A \subset \mathbb{R}^d\)\\
  \( {B}_{r}(A)\) &\(\left\{x\;\colon\; \inf_{y\in A}|x-y|<r\right\} \), the corresponding open ball\\
  \(\co (A)\) & the convex hull of \(A\subset \mathbb R^d\)\\
  \(\mathscr L^d\) & the Lebesgue measure on \(\mathbb R^d\)\\
  \( \spt (\mu) \) & the support of the probability measure \( \mu \)\\
  \( \sharp \) & the pushforward operator (see Section~\ref{sec:wass})\\
  \(\mathcal{P}_{2}(\mathbb{R}^{d}) \) & the space of probability measures \( \mu \) on
                                         \( \mathbb{R}^{d} \) with \( \int |x|^{2}\,d\mu<\infty \)\\
  \( \mathcal{P}_{c}(\mathbb{R}^{d}) \) & the space of compactly supported probability measures on \( \mathbb{R}^{d} \)\\
  \( \mathcal{W}_2 \) & the 2-Wasserstein distance on \( \mathcal{P}_{2}(\mathbb{R}^{d}) \) (see Section~\ref{sec:wass})\\
  \( \Phi_{s,t} \) & the flow of a \emph{nonautonomous} vector field \( v\in L^1\left([0,T];\Lip(\mathbb{R}^d;\mathbb{R}^d)\right) \) on \( [0,T]^{2} \)\\
  \( O\left(g(\varepsilon)\right) \) & a \emph{non-negative} function \( f \) such that \( \limsup_{\varepsilon\to 0}\frac{f(\varepsilon)}{g(\varepsilon)}<\infty \), where \( g\ge0 \)\\
\end{longtable}

Remark that, in contrast to the standard Bachmann–Landau notation~\cite[p. 443]{grahamConcreteMathematicsFoundation1994}, we always assume that \( O(g(\varepsilon)) \) is non-negative.

\subsection{Matrix ordinary differential equations and differential inequalities}

Consider the following matrix differential equation:
\[
\dot X(t)=A(t)X(t) + B(t),\quad X(t)=M,
\]
where \( A,B\colon [0,T]\to \mathcal{L}(\mathbb{R}^d;\mathbb{R}^d) \) are measurable and bounded maps.
By differentiation, one can check that the solution is given by the formula
\begin{equation}
  \label{eq:odesol}
  X(t) = U(t) \left(M + \int_0^tU(s)^{-1}B(s)\,ds\right),
\end{equation}
where \( U(t) \) is the fundamental matrix, i.e., the unique solution of
\[
  \dot U(t)=A(t)U(t),\quad U(0)=\id.
\]

Formula~\eqref{eq:odesol} will be used later, together with the following lemma.

\begin{lemma}[Generalized Gr\"onwall inequality]
  \label{lem:diffineq}
  Let \( x=x(t) \) and \( y=y(t) \) satisfy
  \begin{gather*}
    \dot x(t) = a(t)x(t)+b(t),\quad x(0)=x_0,\\
    \dot y(t) \le a(t)y(t)+b(t),\quad y(0)=x_0,
  \end{gather*}
  where \( a,b\colon[0,T]\to \mathbb{R} \) are bounded measurable maps, \( x_0\in \mathbb{R} \).
  Then
  \[
  y(t)\le x(t)  := e^{\int_0^ta(s)ds}\left(x_0+ \int_0^te^{-\int_0^sa(\tau)d\tau}b(s)\,ds\right).
  \]
\end{lemma}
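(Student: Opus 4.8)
The plan is to reduce the differential inequality for $y$ to a genuinely decreasing quantity by multiplying through with an integrating factor. Set $E(t) := e^{-\int_0^t a(s)\,ds}$, which is absolutely continuous, strictly positive, and satisfies $\dot E(t) = -a(t)E(t)$ for a.e.\ $t$. I would first consider the function $z(t) := E(t)y(t)$. By the product rule for absolutely continuous functions, $\dot z(t) = \dot E(t)y(t) + E(t)\dot y(t) = -a(t)E(t)y(t) + E(t)\dot y(t)$ for a.e.\ $t$, and using the hypothesis $\dot y(t) \le a(t)y(t) + b(t)$ together with $E(t) > 0$, this gives $\dot z(t) \le -a(t)E(t)y(t) + E(t)(a(t)y(t)+b(t)) = E(t)b(t)$ for a.e.\ $t \in [0,T]$.

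Next I would integrate this inequality on $[0,t]$. Since $z$ is absolutely continuous (being a product of a bounded absolutely continuous function and a locally absolutely continuous one, with $a,b$ bounded so that $y$ is Lipschitz), the fundamental theorem of calculus applies and yields $z(t) - z(0) \le \int_0^t E(s)b(s)\,ds$, i.e.\ $E(t)y(t) \le x_0 + \int_0^t e^{-\int_0^s a(\tau)\,d\tau}b(s)\,ds$, where I used $z(0) = E(0)y(0) = x_0$. Dividing by $E(t) > 0$, that is multiplying by $e^{\int_0^t a(s)\,ds}$, produces exactly
\[
y(t) \le e^{\int_0^t a(s)\,ds}\left(x_0 + \int_0^t e^{-\int_0^s a(\tau)\,d\tau}b(s)\,ds\right).
\]
Finally, running the identical computation with the inequality $\dot y \le \dots$ replaced by the equality $\dot x = a x + b$ shows that the right-hand side above is precisely $x(t)$ — equivalently, one differentiates the claimed formula for $x(t)$ directly and checks it solves the linear ODE with $x(0) = x_0$, which is the uniqueness part already implicit in formula \eqref{eq:odesol} specialized to dimension one. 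This gives $y(t) \le x(t)$ for all $t \in [0,T]$.

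There is no serious obstacle here: the only point requiring a little care is the regularity bookkeeping, namely checking that $y$ (hence $z$) is absolutely continuous so that integrating the a.e.\ differential inequality is legitimate — this follows because $a$ and $b$ are bounded, so any solution of $\dot y \le a y + b$ with the matching lower-type structure is Lipschitz on $[0,T]$. Everything else is the classical integrating-factor argument. One could alternatively phrase the conclusion by noting $w := x - y$ satisfies $\dot w \ge a w$, $w(0) = 0$, and then $\frac{d}{dt}(E(t)w(t)) \ge 0$ forces $E(t)w(t) \ge 0$, hence $w(t) \ge 0$; I would mention this as the shortest route but the integrating-factor computation above is the one I would write out.
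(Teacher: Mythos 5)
Your argument is correct and complete. The paper itself does not write out a proof; it simply observes that the statement ``is a direct consequence of~\eqref{eq:odesol} and~\cite[Theorem 4.1 p.~26]{hartmanOrdinaryDifferentialEquations2002}'', i.e.\ it cites the variation-of-constants formula together with Hartman's differential-inequality comparison theorem. You instead give a short, self-contained integrating-factor proof: multiply by $E(t)=e^{-\int_0^t a}$, observe $\frac{d}{dt}\bigl(E(t)y(t)\bigr)\le E(t)b(t)$ a.e., integrate, and divide back by $E(t)>0$. The underlying mathematical idea is identical — this is exactly what Hartman's comparison theorem encapsulates, and formula~\eqref{eq:odesol} in dimension one is the variation-of-constants expression you recover — so the two routes are equivalent in content; what yours buys is that the lemma becomes independent of the external reference, at the cost of a few extra lines. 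Your remark about regularity (that boundedness of $a,b$ makes $y$ Lipschitz, so the a.e.\ differential inequality can be integrated) is exactly the point one has to be careful about and is handled correctly; the alternative formulation via $w=x-y$ with $\frac{d}{dt}(Ew)\ge 0$ is also fine and indeed the shortest path.
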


Lemma~\ref{lem:diffineq} is a direct consequence of~\eqref{eq:odesol} and~\cite[Theorem 4.1 p. 26]{hartmanOrdinaryDifferentialEquations2002}.
Remark that, in contrast to the standard Gr\"onwall's inequality, this estimate holds even if \( a \) and \( b \) are non-positive.

\subsection{Bi-Lipschitz homeomorphisms}
\label{sec:inv}
Here we develop a criterion allowing us to check whether a Lipschitz map is a bi-Lipschitz homeomorphism.
We begin with definitions.

\begin{definition}
We say that a map \( f\colon \mathbb{R}^d\to \mathbb{R}^d \) is compactly supported if it equals the identity outside a compact set.
The smallest compact set with this property will be denoted by \( \spt (f) \).
\end{definition}

\begin{definition}
A map \( f\colon \mathbb{R}^d\to \mathbb{R}^d \) is called \emph{bi-Lipschitz} if there exists \( L>0 \) such that
\[
L^{-1}|x-y|\le |f(x)-f(y)|\le L|x-y|,
\]
for all \( x,y\in \mathbb{R}^d \).
\end{definition}

\begin{definition}
Let \( f\colon \mathbb{R}^d\to \mathbb{R}^d \) be locally Lipschitz. \emph{Clarke's generalized Jacobian}~\cite{clarkeInverseFunctionTheorem1976, clarkeFunctionalAnalysisCalculus2013} is defined as follows:
\[
\partial_{C} f(x):= \co\left\{\lim_{i\to\infty}Df(x_i)\;\colon\; x_i\to x,\; x_{i}\not\in E\cup E_f \right\},
\]
where \( E\subset \mathbb{R}^d \) is any set of measure zero, \( E_{f} \) is the set of points where \( f \) is non-differentiable.
\end{definition}

We have the following result.

\begin{theorem}
  \label{thm:inv}
	A compactly supported Lipschitz map \( f\colon \mathbb{R}^d\to \mathbb{R}^d \) is a bi-Lipschitz homeomorphism if all elements of \( \partial_C f \) are invertible.
\end{theorem}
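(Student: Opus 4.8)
The plan is to prove that a compactly supported Lipschitz map $f$ with all elements of $\partial_C f$ invertible is a bi-Lipschitz homeomorphism by splitting the claim into three parts: (i) $f$ is locally bi-Lipschitz, in the sense that it is injective with a lower Lipschitz bound on some neighborhood of each point; (ii) $f$ is globally injective and surjective, hence a homeomorphism; (iii) the local lower bounds can be upgraded to a global one, using compact support.

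\smallskip\noindent\textbf{Step 1 (local invertibility with uniform constants).} First I would invoke Clarke's inverse function theorem~\cite{clarkeInverseFunctionTheorem1976}: if every element of $\partial_C f(x_0)$ is invertible, then $f$ is a local bi-Lipschitz homeomorphism near $x_0$. Moreover, by upper semicontinuity of the set-valued map $x\mapsto \partial_C f(x)$ and compactness, the set $K:=\spt(f)$ is covered by finitely many balls on each of which $f$ is bi-Lipschitz with a common constant $L_0$; in particular there is $\delta>0$ and $L_0>0$ such that $|f(x)-f(y)|\ge L_0^{-1}|x-y|$ whenever $x,y\in \overline{B}_\delta(K)$ and $|x-y|\le \delta$. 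Outside $\overline{B}_{\delta/2}(K)$ the map $f$ is the identity, so the same kind of bound is trivial there; patching the two regions (and shrinking $\delta$ if necessary so the overlap is controlled) gives a uniform $L_0$ and uniform $\delta$ for which $|f(x)-f(y)|\ge L_0^{-1}|x-y|$ for all $x,y\in\mathbb{R}^d$ with $|x-y|\le\delta$. The upper bound $|f(x)-f(y)|\le \lip(f)\,|x-y|$ holds globally by hypothesis.

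\smallskip\noindent\textbf{Step 2 (global injectivity and surjectivity).} For injectivity, suppose $f(x)=f(y)$ with $x\ne y$. Join $x$ to $y$ by a segment and subdivide it into pieces of length $\le\delta$; telescoping the local lower bound along consecutive points would force $|f(x)-f(y)|>0$ — more carefully, I would argue that $f$ restricted to the segment is a local homeomorphism onto its image, and a standard continuation/monodromy argument on the segment (or: the set of points on the segment mapping to $f(x)$ is open and closed) yields $x=y$. Alternatively, and more cleanly, one can use that a proper local homeomorphism $\mathbb{R}^d\to\mathbb{R}^d$ is a covering map, and $\mathbb{R}^d$ is simply connected, so $f$ is a global homeomorphism; properness is immediate here because $f$ equals the identity outside a compact set, so preimages of bounded sets are bounded. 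Surjectivity follows from the same covering-space argument, or directly: $f(\mathbb{R}^d)$ is open (local homeomorphism) and closed (properness), hence all of $\mathbb{R}^d$.

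\smallskip\noindent\textbf{Step 3 (from local to global lower bound).} It remains to remove the constraint $|x-y|\le\delta$. Given arbitrary $x\ne y$, if $|x-y|\le\delta$ use Step 1. If $|x-y|>\delta$, note that outside $\overline{B}_R(0)$ — where $R$ is chosen so that $K\subset B_R(0)$ — the map $f$ is the identity, so if both $x,y$ lie far out the bound is trivial; the remaining case is handled by compactness of $\overline{B}_{R+\delta}(0)$ together with the already-established global injectivity: the continuous function $(x,y)\mapsto |f(x)-f(y)|/|x-y|$ on the compact set $\{(x,y): |x-y|\ge \delta,\ x,y\in\overline{B}_{R+\delta}(0)\}$ is strictly positive (by injectivity), hence bounded below by some $c>0$. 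Taking $L:=\max\{\lip(f),\,L_0,\,1/c,\,1\}$ gives the desired two-sided bound for all $x,y$.

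\smallskip\noindent\textbf{Main obstacle.} The delicate point is Step 1: translating the pointwise hypothesis ``all elements of $\partial_C f(x)$ are invertible'' into a \emph{uniform} local bi-Lipschitz estimate on a neighborhood of the (compact) set $\spt(f)$. This rests on the upper semicontinuity of Clarke's Jacobian and on a quantitative version of Clarke's inverse function theorem giving the local bi-Lipschitz constant in terms of a lower bound for $\|A^{-1}\|^{-1}$ over $A\in\partial_C f(x)$; once this localization-with-uniform-constants is in hand, Steps 2 and 3 are soft topological/compactness arguments.
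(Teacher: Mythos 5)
Your proof is correct and follows the same overall strategy as the paper: invoke Clarke's inverse function theorem to get local bi-Lipschitz invertibility, show $f$ is a covering map of $\mathbb{R}^d$, conclude from simple connectedness (and the fact that $f=\id$ near infinity) that the covering is trivial, and finally upgrade local to global bi-Lipschitz bounds. The only place where you genuinely diverge is in how you establish the covering property. The paper does this by hand: it covers $\spt(f)$ by finitely many sets on which $f$ is a homeomorphism, and then for each target point constructs an evenly covered neighborhood as a finite intersection of images. You instead cite the classical fact that a \emph{proper} local homeomorphism between manifolds is a covering map, noting that properness is free here because $f=\id$ outside a compact set. This is cleaner and avoids the somewhat delicate ``either $U_i=U_j$ or $U_i\cap U_j=\varnothing$'' case analysis in the paper. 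You also spell out Step 3 (passing from the local lower Lipschitz bound to a global one), which the paper dismisses in one sentence; this is a useful addition, since local bi-Lipschitz plus homeomorphism does not by itself give a global two-sided bound. The one small gap is in your Step 3: your compactness argument covers the case where both $x,y\in\overline B_{R+\delta}(0)$, and the case where both are outside, but not the mixed case where, say, $x\in\overline B_{R+\delta}(0)$ and $y$ is far away. This is easily patched: since $f$ is a bijection equal to the identity outside $B_R(0)$, it maps $\overline B_R(0)$ onto itself, so $|f(x)-f(y)|=|f(x)-y|\ge|y|-R$ while $|x-y|\le|y|+R$, giving a ratio bounded below by $\delta/(2R+\delta)$ once $|y|>R+\delta$.
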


\begin{proof}
  It follows from Clarke's inverse function theorem~\cite{clarkeInverseFunctionTheorem1976} that \( f \) is a local bi-Lipschitz homeomorphism.
  In particular, for each \( x \) there exists an open set \( \mathcal{O}_x \), containing \( x \), and such that \( f\colon \mathcal{O}_x\to f(\mathcal{O}_x) \) is a homeomorphism.
  Notice that \( \spt (f) \) can be covered by a finite number of such sets, say \( (\mathcal{O}_{i})_{i=1}^n \).

  Fix some \( y\in \spt (f) \) and consider the set
  \[
	I:=\left\{ i \;\colon\; y\in f(\mathcal{O}_{i})\right\}.
  \]
  Now, \( U =\bigcap_{i\in I} f(\mathcal{O}_{i}) \) is an open neighborhood of \( y \).
  Therefore each \( \mathcal{O}_{i} \), \( i\in I\), contains a set \( U_{i} \) such that \( f\colon U_i\to U \) is a homeomorphism.
  Clearly, for any \( i,j\in I\) there are only two possibilities: either \( U_i=U_j \) or \( U_i\cap U_j=\varnothing\).
  Thus \( f^{-1}(U) \) is a finite union of open disjoint sets homeomorphic to \( U \).
  This proves that \( f \) is a covering map.
  But for a covering map the number of sheets should be the same for all \( y \); hence it must be \( 1 \), because there are points where \( f =\id\).
  In other words, \( f \) is a homeomorphism.
  Its bi-Lipschitz continuity is another consequence of Clarke's inverse function theorem.
\end{proof}

\subsection{Classical flows and regular perturbations}
\label{subsec:reg_pert}
In this section, we introduce a notion of regular perturbation and discuss its relation with the starting set \( \Gamma_{\varepsilon_0}(\varrho_0) \).

Note that \( \Gamma_{\varepsilon_0}(\varrho_0) \) contains very precise information about the way one perturbs \( \varrho_0 \).
Indeed, it is composed of measures obtained by perturbing \( \varrho_0 \) with (classical) flows of Lipschitz vector fields.
In fact, one may consider more general perturbations than flows.
Such perturbations are described in the following definition.




\begin{definition}[Regular perturbations]
  \label{def:reg_pert}
Fix a measure \( \varrho_0\in \mathcal{P}_c(\mathbb{R}^d) \) and a pair of numbers \( \varepsilon_0>0 \) and \( L>0 \).
  We say that a curve \( \varrho\colon [0,1]\to \mathcal{P}_c(\mathbb{R}^{d}) \) belongs to the class \( \Pi_{\varepsilon_0,L}(\varrho_0) \) of \emph{regular perturbations} of \( \varrho_0 \) if there exists a map \( \Psi\colon [0,\varepsilon_0] \times \mathbb{R}^d\to \mathbb{R}^d \)
such that
\begin{enumerate}[(a)]
  \item for all \( \varepsilon\in [0,\varepsilon_0] \), \( \Psi_{\varepsilon} \) is \( L \)-Lipschitz:
        \begin{equation}
          \label{eq:Psi1}
        \left|\Psi_{\varepsilon}(x)-\Psi_{\varepsilon}(y)\right|\le L|x-y|\quad \forall x,y\in \mathbb{R}^d;
        \end{equation}
  \item for all \( \varepsilon\in [0,\varepsilon_0] \), \( \Psi_{\varepsilon} \) is ``close to the identity'':
        \begin{equation}
          \label{eq:Psi2}
         \left|\Psi_{\varepsilon}(x)-x\right| \le \varepsilon L\quad \forall x\in \mathbb{R}^d;
        \end{equation}
  \item for all \( \varepsilon\in [0,\varepsilon_0] \) and \( x\in \mathbb{R}^d \), the matrix \( D\Psi_{\varepsilon}(x) \) is positive semi-definite as soon as it exists.
  \item one has \( \Psi_{\varepsilon\sharp}\varrho_{\varepsilon}=\varrho_0 \), for all \( \varepsilon\in [0,\varepsilon_0] \).
\end{enumerate}
\end{definition}
While the definition seems technical, it simply lists minimal properties of a perturbation that we require to prove the main theorem.

With a class of admissible perturbation at hand, we can define the set of initial conditions:
\begin{equation}
  \label{eq:Gamma2}
  \Gamma_{\varepsilon_{*}}^{\varepsilon_0,L}(\varrho_0):= \left\{\varrho_{\varepsilon}\text{ where } \varrho\in \Pi_{\varepsilon_0,L}(\varrho_0),\,\varepsilon\in[0,\varepsilon_{*}]\right\},\quad \varepsilon_{*}\in [0,\varepsilon_0].
\end{equation}
The relation between \( \Gamma_{\varepsilon_*}(\varrho_0) \), defined by~\eqref{eq:Gamma}, and \( \Gamma^{\varepsilon_0,L}_{\varepsilon_{*}}(\varrho_0) \) is established in the following lemma.

\begin{lemma}
  \label{lem:flows}
  One has \( \Gamma_{\varepsilon_*}(\varrho_0) \subset \Gamma^{\log 2,2}_{\varepsilon_{*}}(\varrho_0) \), for all \( \varepsilon_{*}\in [0,\log 2] \).
\end{lemma}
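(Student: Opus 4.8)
The plan is to realize every member of \( \Gamma_{\varepsilon_*}(\varrho_0) \) as the value \( \varrho_\varepsilon \) of a regular perturbation belonging to \( \Pi_{\log 2,2}(\varrho_0) \), for some \( \varepsilon\in[0,\varepsilon_*] \). Fix \( \mu\in\Gamma_{\varepsilon_*}(\varrho_0) \); by definition \( \mu=\Phi^v_{0,\varepsilon\sharp}\varrho_0 \) for some \( \varepsilon\in[0,\varepsilon_*] \) and some \( v\in L^\infty([0,\varepsilon_*];\Lip(\mathbb{R}^d;\mathbb{R}^d)) \) with \( \|v\|_\infty\le1 \). Extending \( v \) by zero to \( [0,1] \) (which does not increase \( \|v\|_\infty \)), set \( \varrho_s:=\Phi^v_{0,s\sharp}\varrho_0 \) for \( s\in[0,1] \) and \( \Psi_s:=\Phi^v_{s,0} \) (the inverse flow) for \( s\in[0,\log 2] \). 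Then \( \mu=\varrho_\varepsilon \), and it only remains to verify that \( \Psi \) satisfies conditions (a)--(d) of Definition~\ref{def:reg_pert}.

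Conditions (a), (b) and (d) are routine. Since \( \lip(v_t)\le 1 \) for a.e.\ \( t \) (because \( \|v\|_\infty\le1 \)), Gr\"onwall's inequality (Lemma~\ref{lem:diffineq}, applied to \( t\mapsto|\Phi^v_{0,t}(x)-\Phi^v_{0,t}(y)|^2 \), and similarly for the backward flow) gives \( \lip(\Phi^v_{s,t})\le e^{|t-s|} \); in particular \( \lip(\Psi_s)=\lip(\Phi^v_{s,0})\le e^{s}\le e^{\log 2}=2 \), which is (a). For (b), integrating \( \tfrac{d}{d\tau}\Phi^v_{s,\tau}(x)=v_\tau(\Phi^v_{s,\tau}(x)) \) from \( \tau=s \) to \( \tau=0 \) and using \( \|v_\tau\|_\infty\le1 \) yields \( |\Psi_s(x)-x|\le s\le 2s \). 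Finally, (d) is the flow identity \( \Psi_{s\sharp}\varrho_s=\Phi^v_{s,0\sharp}\Phi^v_{0,s\sharp}\varrho_0=\varrho_0 \).

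The substantive point is (c): \( D\Psi_s(x) \) must be positive semi-definite wherever it exists. The key observation is that a time-\( s \) flow with \( s\le\log 2 \) is a \emph{monotone} map. Fixing \( x,y \), put \( \phi(t):=\langle\Phi^v_{0,t}(x)-\Phi^v_{0,t}(y),\,x-y\rangle \), so \( \phi(0)=|x-y|^2 \); using \( \lip(v_t)\le1 \) and the Lipschitz bound \( |\Phi^v_{0,t}(x)-\Phi^v_{0,t}(y)|\le e^{t}|x-y| \) one gets
\[
\dot\phi(t)=\langle v_t(\Phi^v_{0,t}(x))-v_t(\Phi^v_{0,t}(y)),\,x-y\rangle\ \ge\ -e^{t}|x-y|^2 ,
\]
hence \( \phi(s)\ge(2-e^{s})|x-y|^2\ge0 \) for \( s\le\log 2 \). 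Thus \( \Phi^v_{0,s} \) is monotone, and applying this with \( x,y \) replaced by \( \Phi^v_{s,0}(x'),\Phi^v_{s,0}(y') \) (and using \( \Phi^v_{0,s}\circ\Phi^v_{s,0}=\id \)) shows \( \Psi_s=\Phi^v_{s,0} \) is monotone as well. At a point \( x \) of differentiability of \( \Psi_s \), inserting \( \Psi_s(x+hu)=\Psi_s(x)+hD\Psi_s(x)u+o(h) \) into \( \langle\Psi_s(x+hu)-\Psi_s(x),hu\rangle\ge0 \) and letting \( h\to0^+ \) gives \( \langle D\Psi_s(x)u,u\rangle\ge0 \) for all \( u \), which is (c).

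I expect (c) to be the only delicate step: it is exactly the monotonicity threshold \( s\le\log 2 \), and the attendant Lipschitz constant \( e^{\log 2}=2 \), that force the constants appearing in the statement. Everything else reduces to Gr\"onwall estimates for Carath\'eodory flows of bounded, Lipschitz-in-\( x \) vector fields, together with elementary properties of push-forwards.
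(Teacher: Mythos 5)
Your proof is correct, and for the crucial item (c) of Definition~\ref{def:reg_pert} it takes a genuinely different route from the paper's. The paper works at the infinitesimal level: starting from the identity
\[
\frac{\Phi^v_{\varepsilon,0}(x+th)-\Phi^v_{\varepsilon,0}(x)}{t}
= h - \frac{1}{t}\int_0^{\varepsilon}\bigl[v_{\varepsilon-s}\bigl(\Phi^v_{\varepsilon,\varepsilon-s}(x+th)\bigr)-v_{\varepsilon-s}\bigl(\Phi^v_{\varepsilon,\varepsilon-s}(x)\bigr)\bigr]\,ds ,
\]
it bounds the integral by \( |h|(e^{\varepsilon}-1) \), passes to the limit \( t\to 0 \) to get \( D\Phi^v_{\varepsilon,0}(x) = \id + M(\varepsilon,x) \) with \( \|M(\varepsilon,x)\|\le e^{\varepsilon}-1\le 1 \), and concludes positive semi-definiteness directly from \( \langle(\id+M)v,v\rangle\ge(1-\|M\|)|v|^2 \). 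You instead prove the \emph{macroscopic} statement that \( \Phi^v_{0,s} \) (hence its inverse \( \Psi_s \)) is a monotone map on \( \mathbb{R}^d \) whenever \( s\le\log 2 \), via the differential inequality for \( \phi(t)=\langle\Phi^v_{0,t}(x)-\Phi^v_{0,t}(y),x-y\rangle \), and then deduce positive semi-definiteness of \( D\Psi_s \) wherever it exists by a standard difference-quotient argument. The two arguments produce the identical threshold \( e^{s}\le 2 \), but yours buys a stronger, derivative-free conclusion (monotonicity of the map everywhere, not just PSD of the Jacobian a.e.), which is conceptually cleaner and closer to the optimal-transport intuition behind \( \Pi_{\varepsilon_0,L}(\varrho_0) \); the paper's version is more self-contained in that it never invokes the backward flow or the composition identity \( \Phi^v_{0,s}\circ\Phi^v_{s,0}=\id \). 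Conditions (a), (b), (d) you handle exactly as the paper does.
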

\begin{proof}
  Let \( \mu\in \Gamma_{\varepsilon_*}(\varrho_0) \).
   Then there exists a vector field \( v \in L^\infty\left([0,\varepsilon_*];\Lip(\mathbb{R}^d;\mathbb{R}^d)\right) \) such that \( \|v\|_{\infty}\le 1 \) and the curve \( \varrho\colon [0,\varepsilon_*]\to \mathcal{P}_2(\mathbb{R}^d) \) defined by \( \varrho_{\varepsilon} := (\Phi_{0,\varepsilon})_{\sharp}\varrho_0   \) passes through \( \mu \).

  Recall that
  \[
    \Phi_{s,t}(x) = x + \int_s^t v_{\tau}\left(\Phi_{s,\tau}(x)\right)\,d\tau,\quad \forall s,t\in [0,\varepsilon_*].
  \]
  In particular, the inequality \( \|v\|_{\infty}\le 1 \) implies that
  \[
    \left|\Phi_{s,t}(x)-x\right|\le |s-t|,\quad
    \left|\Phi_{s,t}(x)-\Phi_{s,t}(x)\right|\le |x-y|e^{|s-t|},
  \]
  for all \( x,y\in \mathbb{R}^d \) and \( \varepsilon\in [0,\varepsilon_*] \).

 Take some \( x,h\in \mathbb{R}^d \), \( t\in \mathbb{R} \), and consider the identity
  \begin{equation}
    \label{eq:Phi_diff}
    \frac{\Phi_{\varepsilon,0}(x+th) - \Phi_{\varepsilon,0}(x)}{t} = h - \frac{1}{t}\int^\varepsilon_0\left[v_{\varepsilon-s}\left(\Phi_{\varepsilon,\varepsilon-s}(x+th)\right)-v_{\varepsilon-s}\left(\Phi_{\varepsilon,\varepsilon-s}(x)\right)\right]\,ds.
  \end{equation}
  We estimate the second term on the right-hand side:
  \[ \left|\frac{1}{t}\int_0^{\varepsilon}\left[v_{\varepsilon-s}\left(\Phi_{\varepsilon,\varepsilon-s}(x+th)\right)-v_{\varepsilon-s}\left(\Phi_{\varepsilon,\varepsilon-s}(x)\right)\right]\,ds\right|\le |h|\int_0^{\varepsilon}e^{ s}\,ds = |h|(e^{\varepsilon}-1).
  \]
  If \( D\Phi_{\varepsilon,0}(x) \) exists, by passing to the limits as \( t\to 0 \) in~\eqref{eq:Phi_diff}, it holds
  \[
    D\Phi_{\varepsilon,0}(x)h = h + M(\varepsilon,x)h,
  \]
  for some matrix \( M(\varepsilon,x) \).
  We know that \( \|M(\varepsilon,x)\|\le e^{\varepsilon}-1  \), for any \( \varepsilon\in [0,\varepsilon_*] \) and \( x\in \mathbb{R}^d \).
  Thus, if \( \varepsilon_*\le\log 2 \), then \( \|M(\varepsilon,x)\|\le 1 \) and \( D\Phi_{\varepsilon,0}(x) \) is positive semi-definite.

  Finally, let \( \Psi_{\varepsilon}:=\Phi_{\varepsilon,0} \) and note that~\eqref{eq:Psi1} and~\eqref{eq:Psi2} hold for \( L= 2 \) whenever \( \varepsilon\in [0,\log 2] \), while \( D\Psi_{\varepsilon}(x) \) is positive semi-definite when exists.
  This observation completes the proof.
\end{proof}

We conclude this section by studying the topology of $\Gamma_r(\rho_0)$ for $\rho_0\in  \mathcal{P}^{ac}_c(\R^d)$. As stated above, we will show that it is much smaller than the Wasserstein ball $\overline{\bf B}_r(\rho_0)$ and, in particular, it is not dense. For more general results, see e.g. \cite{gigliInverseImplicationBrenierMccann2011}.
\begin{proposition}
  \label{p-nondense}
  Let $\rho_0\in \mathcal{P}^{ac}_c(\R^d)$ be fixed.
  Then, $\Gamma_\varepsilon(\rho_0)$ defined by \eqref{eq:Gamma} is a compact nowhere dense subset of \( \mathcal{P}_c^{ac}(\mathbb{R}^d)\cap \overline{\bf B}_r(\varrho_0) \).
\end{proposition}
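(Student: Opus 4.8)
\textbf{The plan} is: record the two easy inclusions \(\Gamma_\varepsilon(\varrho_0)\subset\mathcal P_c^{ac}(\mathbb R^d)\) and \(\Gamma_\varepsilon(\varrho_0)\subset\overline{\bf B}_\varepsilon(\varrho_0)\); then prove compactness by a relaxation (Young–measure) argument; then deduce nowhere density from compactness together with the failure of local compactness of \(\mathcal P_c^{ac}\) in the Wasserstein metric. For the easy part, a time rescaling \(w_\tau:=t\,v_{t\tau}\), \(\tau\in[0,1]\), gives \(\Phi^w_{0,1}=\Phi^v_{0,t}\) and \(\|w_\tau\|_{\lip}=t\|v_{t\tau}\|_{\lip}\le t\le\varepsilon\), and conversely every \(w\) with \(\esssup_\tau\|w_\tau\|_{\lip}\le\varepsilon\) arises this way (take \(t=\varepsilon\)); hence
\[
\Gamma_\varepsilon(\varrho_0)=\bigl\{\Phi^w_{0,1\sharp}\varrho_0\ :\ w\in L^\infty\!\left([0,1];\Lip(\mathbb R^d;\mathbb R^d)\right),\ \esssup_\tau\|w_\tau\|_{\lip}\le\varepsilon\bigr\}.
\]
For such \(w\), the flow \(\Phi^w_{0,1}\) is a bi-Lipschitz homeomorphism of \(\mathbb R^d\) (Gr\"onwall, applied to \(w\) and to its time reversal), hence it sends Lebesgue-null sets to Lebesgue-null sets and \(\Phi^w_{0,1\sharp}\varrho_0\ll\mathscr L^d\); moreover \(|\Phi^w_{0,1}(x)-x|\le\int_0^1\|w_s\|_\infty\,ds\le\varepsilon\), so \(\spt(\Phi^w_{0,1\sharp}\varrho_0)\subset\overline B_\varepsilon(\spt\varrho_0)\) is compact, and \((\id,\Phi^w_{0,1})_\sharp\varrho_0\) is a transport plan of cost \(\le\varepsilon^2\), giving \(\mathcal W_2(\Phi^w_{0,1\sharp}\varrho_0,\varrho_0)\le\varepsilon\).

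\emph{Compactness — the heart of the matter.} I would take \(\mu_n=\Phi^{w_n}_{0,1\sharp}\varrho_0\in\Gamma_\varepsilon(\varrho_0)\) and regard the \(w_n\) as relaxed controls (Young measures) valued in \(\mathcal B_\varepsilon:=\{f:\|f\|_{\lip}\le\varepsilon\}\), which is compact in the topology of local uniform convergence by Arzel\`a--Ascoli. Since \(\mathcal B_\varepsilon\) is a compact metric space, the associated Young measures are narrowly sequentially compact, so along a subsequence \(\delta_{w_n(t)}\otimes dt\rightharpoonup\vartheta_t\otimes dt\). Define the barycentric field \(\bar w_t(x):=\int_{\mathcal B_\varepsilon}f(x)\,d\vartheta_t(f)\); by Jensen \(\|\bar w_t\|_\infty\le\varepsilon\) and \(\lip(\bar w_t)\le\varepsilon\), \emph{because an average of \(\varepsilon\)-Lipschitz maps is again \(\varepsilon\)-Lipschitz}, so \(\bar w\) is again admissible. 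The maps \((t,x)\mapsto\Phi^{w_n}_{0,t}(x)\) are equi-Lipschitz and equibounded on \([0,1]\times\spt\varrho_0\), hence (subsequence) converge uniformly there to some \(\gamma\); writing \(\Phi^{w_n}_{0,t}(x)=x+\int_0^t w_n(s,\Phi^{w_n}_{0,s}(x))\,ds\), the \(x\)-Lipschitz bound on \(w_n\) lets me replace \(\Phi^{w_n}_{0,s}(x)\) by \(\gamma(s,x)\) up to a vanishing error, and narrow convergence tested against the continuous bounded map \((s,f)\mapsto f(\gamma(s,x))\) identifies the limit as \(\gamma(t,x)=x+\int_0^t\bar w_s(\gamma(s,x))\,ds\), i.e. \(\gamma(\cdot,x)=\Phi^{\bar w}_{0,\cdot}(x)\). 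Hence \(\Phi^{w_n}_{0,1}\to\Phi^{\bar w}_{0,1}\) uniformly on \(\spt\varrho_0\), and since \(\mathcal W_2(T_\sharp\varrho_0,S_\sharp\varrho_0)\le\|T-S\|_{C(\spt\varrho_0)}\), \(\mu_n\to\Phi^{\bar w}_{0,1\sharp}\varrho_0\in\Gamma_\varepsilon(\varrho_0)\). So \(\Gamma_\varepsilon(\varrho_0)\) is sequentially compact, hence compact in \((\mathcal P_2,\mathcal W_2)\).

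\emph{Nowhere density.} Being compact, \(\Gamma_\varepsilon(\varrho_0)\) is closed in \(X:=\mathcal P_c^{ac}(\mathbb R^d)\cap\overline{\bf B}_\varepsilon(\varrho_0)\), so it suffices to show it has empty interior in \(X\): for each \(\mu\in\Gamma_\varepsilon(\varrho_0)\) and \(\delta>0\) I must produce \(\nu\in X\setminus\Gamma_\varepsilon(\varrho_0)\) with \(\mathcal W_2(\nu,\mu)<\delta\). If \(\mathcal W_2(\mu,\varrho_0)=\varepsilon\), first replace \(\mu\) by a displacement interpolant \(\mu_t\) (\(t<1\), close to \(1\)) between \(\varrho_0\) and \(\mu\): by McCann's theorem \(\mu_t\) is absolutely continuous and compactly supported, \(\mathcal W_2(\mu_t,\varrho_0)=t\varepsilon<\varepsilon\), and \(\mathcal W_2(\mu_t,\mu)\) is as small as we wish; so we may assume \(\mathcal W_2(\mu,\varrho_0)<\varepsilon\). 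Fix \(\eta\in(0,\delta)\) with \(\mathcal W_2(\cdot,\mu)\le\eta\Rightarrow\mathcal W_2(\cdot,\varrho_0)<\varepsilon\), so \(\overline{\bf B}_\eta(\mu)\cap\mathcal P_c^{ac}\subset X\). Pick \(y_0\notin\spt\mu\) and \(\theta>0\) small: the absolutely continuous measures \(\nu_k:=(1-\theta)\mu+\theta\,\sigma_k\), where \(\sigma_k\) is the normalized Lebesgue measure on \(B(y_0,1/k)\), satisfy \(\mathcal W_2(\nu_k,\mu)\le\eta\) for all \(k\), while \(\nu_k\to(1-\theta)\mu+\theta\delta_{y_0}\) in \(\mathcal W_2\), which is not absolutely continuous. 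Since \(\Gamma_\varepsilon(\varrho_0)\) is closed and contained in \(\mathcal P_c^{ac}\), the \(\nu_k\) cannot all lie in \(\Gamma_\varepsilon(\varrho_0)\); any \(\nu_k\notin\Gamma_\varepsilon(\varrho_0)\) is the desired \(\nu\).

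\emph{Main obstacle.} The delicate point is the closedness of \(\Gamma_\varepsilon(\varrho_0)\), that is, showing that a limit of flows of \(\varepsilon\)-Lipschitz fields is again of that form. A naive pointwise limit of the velocity fields \(w_n\) need not be Lipschitz, so one must pass to the barycentric field of the limiting Young measure (equivalently, invoke closedness of the reachable set of the differential inclusion with convex, compact velocity constraint). What rescues admissibility — and is really what makes \(\Gamma_\varepsilon\) a natural object — is precisely that averaging preserves the Lipschitz constant, so no relaxation gap appears; everything else in the argument is routine.
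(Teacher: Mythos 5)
Your proof is correct, and it takes a genuinely different route for the compactness part. The paper handles compactness by invoking an abstract result on reachable sets of differential inclusions in Wasserstein space (\cite[Proposition~2.21]{bonnet-weillViabilityInvarianceProper2023}); you instead prove closedness by hand with a Young-measure relaxation: lift the controls to $\mathcal{P}(\mathcal{B}_\varepsilon)$-valued curves, extract a narrow limit, take the barycentric vector field, and observe that admissibility is preserved because $\mathcal{B}_\varepsilon=\{f:\|f\|_{\lip}\le\varepsilon\}$ is convex. This makes the mechanism explicit (no relaxation gap precisely because the constraint set is convex and compact in local uniform convergence), at the cost of re-proving what the paper cites; the time-rescaling normalization to $[0,1]$ is also a nice simplification not present in the paper. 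One small caveat you gloss over: to land back in the admissible class $L^\infty([0,1];\Lip(\mathbb{R}^d;\mathbb{R}^d))$ you should check the barycentric field $\bar w$ is (strongly) measurable in $t$; since $\Lip$ is non-separable this requires a word — e.g., that Carath\'eodory measurability (measurable in $t$ for each $x$, uniformly $\varepsilon$-Lipschitz in $x$) is what is actually used to define the flow — but this is a technicality that the paper's framework implicitly shares.

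For the nowhere-density step your argument is morally the same as the paper's but organized in the opposite direction: the paper notes that any open set in $\mathcal{P}_c\cap\overline{\bf B}_\varepsilon(\varrho_0)$ contains an atomic measure, approximates it by absolutely continuous measures, and derives a contradiction from compactness; you start from an arbitrary $\mu\in\Gamma_\varepsilon(\varrho_0)$, push a small fraction of mass onto shrinking balls around a fixed $y_0\notin\spt\mu$ to manufacture nearby absolutely continuous measures converging in $\mathcal{W}_2$ to $(1-\theta)\mu+\theta\delta_{y_0}\notin\mathcal{P}^{ac}_c$, and conclude that not all of them can belong to the compact subset $\Gamma_\varepsilon(\varrho_0)\subset\mathcal{P}^{ac}_c$. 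Both work; yours avoids appealing to the full density of atomic measures and the preliminary displacement-interpolation reduction (McCann regularity) is handled correctly. Overall a valid and slightly more self-contained alternative proof.
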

\begin{proof}
  We first prove that $\Gamma_\varepsilon(\rho_0)$ is compact in \( \mathcal{P}_2(\mathbb{R}^d) \).
  First observe that the set
  $$
  V:=\{v\in C(\mathbb{R}^{d};\mathbb{R}^d)
\text{ with } \|v\|_{\lip}\le 1\}
  $$
  is closed in \( C(\mathbb{R}^d;\mathbb{R}^d) \) if this space is endowed with the topology of \emph{local uniform convergence}.
  Note that \( \Gamma_\varepsilon(\varrho_0) \) is the reachable set at time \( r \) of the differential inclusion
  $$
  \partial_t \mu_{t} \in -\nabla\cdot(V\mu_t),\quad \mu_0=\varrho_0,
  $$
  that should be understood in the sense of~\cite{bonnet-weillViabilityInvarianceProper2023}.
  By~\cite[Proposition 2.21]{bonnet-weillViabilityInvarianceProper2023}, \( \Gamma_\varepsilon(\varrho_0) \) is a compact subset of \( \mathcal{P}_2(\mathbb{R}^d) \).
  Since the Wasserstein topology and the weak topology coincide on compact sets, we conclude that \( \Gamma_\varepsilon(\varrho_0) \) is compact in the weak topology as well.

  We now show that \( \Gamma_\varepsilon(\varrho_0)\subset \mathcal{P}_c^{ac}(\mathbb{R}^d)\cap \overline{\bf B}_r(\varrho_0) \).
On the one hand, \(\Gamma_\varepsilon(\varrho_0)\) is completely contained in $\mathcal{P}^{ac}_c(\R^d)$, by regularity of flows of the continuity equation with Lipschitz vector fields starting from $\rho_0\in \mathcal{P}^{ac}_c(\R^d)$.
On the other hand, \( \Gamma_\varepsilon(\varrho_0)\subset \overline{\bf B}_r(\varrho_0) \) by Proposition~\ref{prop:basic} Statement 5 and the second inequality in~\eqref{eq:W2ineq} below.


It remains to show that \( \Gamma_r(\varrho_0) \) has empty interior in $ \mathcal{P}_c^{ac}(\mathbb{R}^d)\cap \overline{\bf B}_r(\varrho_0)$, and therefore, being a compact set, is nowhere dense in \( \mathcal{P}_c^{ac}(\mathbb{R}^d)\cap \overline{\bf B}_r(\varrho_0) \).
Suppose the contrary, i.e., there exists an open subset \( \mathcal{O} \) of \( \mathcal{P}_c(\mathbb{R}^d)\cap \overline{\bf B}_r(\varrho_0) \) such that \( \mathcal{P}_c^{ac}(\mathbb{R}^d)\cap \mathcal{O}\subset \Gamma_r(\varrho_0) \).
It is known that \( \mathcal{O} \) contains an atomic measure \( \mu_d \).
Since any atomic measure can be approximated in the weak topology by an absolutely continuous measure with uniformly compact support, there exists a sequence \( \mu_{k}\in \mathcal{P}_c^{ac}(\mathbb{R}^d)\cap \mathcal{O}\) converging to \( \mu_d \).
Now, by compactness, it holds \( \mu_d\in \Gamma_r(\varrho_0) \), which is contrary to the fact that \( \Gamma_r(\varrho_0)\subset \mathcal{P}_c^{ac}(\mathbb{R}^d) \).
\end{proof}

\subsection{Nonlocal flows}
In this section, we discuss basic properties of the nonlocal continuity equation~\eqref{eq:conteq-uncontrolled} under the regularity assumption \( (A_1) \).

\begin{proposition}
  \label{prop:basic}
  Let Assumption \( (A_1) \) hold.
  Then,
\begin{enumerate}[\rm (1)]
	\item For each \( \varrho_0\in \mathcal{P}_{c}(\mathbb{R}^d) \) there exists a unique solution \( Z^{\varrho_0}\colon [0,T]\times \mathbb{R}^d \to \mathbb{R}^d \) of the Cauchy problem on \( C(\mathbb{R}^d;\mathbb{R}^d) \):
        \begin{equation}
          \label{eq:flowZ}
	\dot Z^{\varrho_0}_t = V_t(Z^{\varrho_0}_t,Z^{\varrho_0}_{t\sharp}\varrho_0),\quad Z^{\varrho_0}_0=\id.
        \end{equation}
	\item For each \( \varrho_0\in \mathcal{P}_{c}(\mathbb{R}^d) \) the curve \( \mu_t := Z^{\varrho_0}_{t\sharp}\varrho_0 \) is the unique weak solution of the nonlocal continuity equation~\eqref{eq:conteq-uncontrolled} with the initial condition \( \mu_0=\varrho_0 \).
	\item The map \( Z^{\varrho_0}_t\colon \mathbb{R}^d\to \mathbb{R}^{d} \), called \emph{the flow of \( V \)}, is a bi-Lipschitz homeomorphism for each \( t\in[0,T] \):
\[
	e^{-Mt}|x-y| \le \left|Z^{\varrho_0}_t(x)-Z^{\varrho_0}_t(y)\right| \le e^{Mt}|x-y|.
				\]
  \item If \( V \) is continuously differentiable in \( x \), then, for each \( t\in [0,T] \), the map \( Z_t^{\varrho_0}\colon \mathbb{R}^d\to \mathbb{R}^d \) is a \( C^1 \)-diffeomorphism.
  \item For all \( t\in [0,T] \) and \( x\in \mathbb{R}^d \), one has
        \[
        \left|Z_t^{\varrho_0}(x)-x\right|\le Mt.
        \]
\end{enumerate}
\end{proposition}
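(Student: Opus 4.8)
The plan is to isolate the only genuinely nonlocal difficulty — the coupling between the flow and the measure it transports — in the proof of statements (1) and (2), and then to \emph{decouple}: once the curve \( t\mapsto\mu_t \) is known, statements (3)--(5) become assertions about the classical nonautonomous flow of the frozen field \( W_t(x):=V_t(x,\mu_t) \), and follow from Gr\"onwall estimates and standard ODE theory.

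\emph{Statements (1)--(2).} For existence and uniqueness of \( Z^{\varrho_0} \) I would run a Banach fixed-point argument on the complete metric space \( \mathcal{X}:=\{Z\in C([0,T];C(\mathbb{R}^d;\mathbb{R}^d))\colon \|Z_t-\id\|_\infty\le MT\ \forall t\} \) with the uniform-in-time sup-metric. The Picard operator
\[
(\mathcal{T}Z)_t := \id + \int_0^t V_s\left(Z_s,Z_{s\sharp}\varrho_0\right)\,ds
\]
maps \( \mathcal{X} \) into itself (since \( |V|\le M \)) and is well posed because \( s\mapsto Z_{s\sharp}\varrho_0 \) is a weakly continuous, compactly supported curve in \( \mathcal{P}_2(\mathbb{R}^d) \); the contraction estimate uses the Lipschitz bounds of \( (A_1) \) together with
\[
\mathcal{W}_2\left(Z_{s\sharp}\varrho_0,\tilde Z_{s\sharp}\varrho_0\right)\le \left(\int |Z_s-\tilde Z_s|^2\,d\varrho_0\right)^{1/2}\le \sup_{\spt\varrho_0}|Z_s-\tilde Z_s|,
\]
so that \( \mathcal{T} \) becomes a contraction for a time-weighted sup-norm; iterating covers \( [0,T] \). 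This is precisely the setting of \cite{piccoliTransportEquationNonlocal2013,ambrosioTransportEquationCauchy2008}, which may simply be cited. For (2), with \( \mu_t=Z^{\varrho_0}_{t\sharp}\varrho_0 \) and \( \phi\in C_c^\infty(\mathbb{R}^d) \), differentiating \( \int\phi\,d\mu_t=\int\phi(Z^{\varrho_0}_t(x))\,d\varrho_0(x) \) and using \eqref{eq:flowZ} gives
\[
\frac{d}{dt}\int\phi\,d\mu_t=\int\nabla\phi(y)\cdot V_t(y,\mu_t)\,d\mu_t(y),
\]
i.e.\ \( \mu_t \) is a weak solution. For uniqueness, if \( \nu_t \) is any weak solution with \( \nu_0=\varrho_0 \), then \( x\mapsto V_t(x,\nu_t) \) is bounded and \( M \)-Lipschitz in \( x \); the superposition principle (\cite{AGS05,ambrosioTransportEquationCauchy2008}) and uniqueness for that Lipschitz characteristic ODE force \( \nu_t \) to be the pushforward by its flow \( X^\nu_t \). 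Comparing with \( X^\mu_t \) through the common coupling \( \varrho_0 \), a Gr\"onwall estimate for \( g(t):=(\int|X^\mu_t-X^\nu_t|^2\,d\varrho_0)^{1/2}\ge\mathcal{W}_2(\mu_t,\nu_t) \), using \( |V_t(X^\mu_t,\mu_t)-V_t(X^\nu_t,\nu_t)|\le M|X^\mu_t-X^\nu_t|+M\mathcal{W}_2(\mu_t,\nu_t) \), yields \( \dot g\le 2Mg \), \( g(0)=0 \), hence \( \mu_t\equiv\nu_t \).

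\emph{Statements (3) and (5).} From now on \( t\mapsto\mu_t \) is the curve just obtained; set \( W_t(x):=V_t(x,\mu_t) \), which is measurable in \( t \), bounded by \( M \), and \( M \)-Lipschitz in \( x \), and note that \( Z^{\varrho_0}_t \) is its classical (Carath\'eodory) nonautonomous flow. Statement (5) is immediate: \( Z^{\varrho_0}_t(x)-x=\int_0^t W_s(Z^{\varrho_0}_s(x))\,ds \) has norm \( \le Mt \). For (3), put \( \gamma(t):=|Z^{\varrho_0}_t(x)-Z^{\varrho_0}_t(y)| \); from the integral form and \( |W_s(Z_s(x))-W_s(Z_s(y))|\le M\gamma(s) \) one gets \( -M\gamma(t)\le\dot\gamma(t)\le M\gamma(t) \) a.e., so Lemma~\ref{lem:diffineq}, applied with coefficient \( +M \) and then with the (admissible) coefficient \( -M \), gives \( e^{-Mt}|x-y|\le\gamma(t)\le e^{Mt}|x-y| \). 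The lower bound shows \( Z^{\varrho_0}_t \) is injective; by invariance of domain its image is open, and by (5) the map is proper (\( (Z^{\varrho_0}_t)^{-1}(K) \) is bounded hence compact), so the image is also closed; since \( \mathbb{R}^d \) is connected, \( Z^{\varrho_0}_t \) is onto, hence a bi-Lipschitz homeomorphism with the stated constants. (Note Theorem~\ref{thm:inv} cannot be used here since \( Z^{\varrho_0}_t \) is not compactly supported, only a bounded perturbation of the identity; the properness argument replaces it.)

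\emph{Statement (4) and the main obstacle.} If in addition \( V \) is \( C^1 \) in \( x \), then \( W_t(\cdot)=V_t(\cdot,\mu_t) \) is \( C^1 \) with \( \|D_xW_t\|\le M \), measurable and bounded in \( t \); the classical theory of differentiable dependence on initial data for Carath\'eodory ODEs (\cite{hartmanOrdinaryDifferentialEquations2002}) gives \( Z^{\varrho_0}_t\in C^1 \), with \( DZ^{\varrho_0}_t(x) \) solving
\[
\frac{d}{dt}DZ^{\varrho_0}_t(x)=D_xW_t\left(Z^{\varrho_0}_t(x)\right)DZ^{\varrho_0}_t(x),\qquad DZ^{\varrho_0}_0(x)=\id,
\]
so by the Liouville formula \( \det DZ^{\varrho_0}_t(x)=\exp\int_0^t\tr D_xW_s(Z^{\varrho_0}_s(x))\,ds>0 \); thus \( DZ^{\varrho_0}_t(x) \) is everywhere invertible, and together with bijectivity from (3) the inverse function theorem makes \( Z^{\varrho_0}_t \) a \( C^1 \)-diffeomorphism. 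The only step that is more than bookkeeping is the uniqueness in (2): one must invoke the superposition principle to know that an \emph{arbitrary} weak solution is transported by the characteristic ODE of its own (Lipschitz) frozen field before the Wasserstein Gr\"onwall estimate can close; the global surjectivity of \( Z^{\varrho_0}_t \) in (3) is the second point needing a small argument, carried by properness rather than by Theorem~\ref{thm:inv}.
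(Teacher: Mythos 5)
Your argument is correct and follows the standard route: the paper itself offers no proof here, deferring entirely to \cite{piccoliTransportEquationNonlocal2013}, and your sketch is a faithful reconstruction of that argument (fixed-point for the nonlocal characteristic system, decoupling via the frozen field $W_t(x)=V_t(x,\mu_t)$, Gr\"onwall for the bi-Lipschitz bounds, superposition for uniqueness, Liouville/variational equation for $C^1$). One small point worth stating explicitly: to get the lower bound $\gamma(t)\ge e^{-Mt}|x-y|$ from $\dot\gamma\ge -M\gamma$ via Lemma~\ref{lem:diffineq}, apply the lemma to $-\gamma$ with $a\equiv -M$, and note that if $\gamma$ ever vanishes then the upper bound run backward in time forces $x=y$, so $\gamma$ is differentiable along the whole interval when $x\neq y$; your remark that Theorem~\ref{thm:inv} does not apply here (since $Z_t$ is only a bounded, not compactly supported, perturbation of the identity) and the properness/connectedness workaround is a correct and sensible observation.
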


Proofs can be found, e.g., in~\cite{piccoliTransportEquationNonlocal2013}.

Slightly abusing the notation, we will refer to \( Z_t^{\varrho_0} \) as the \emph{nonlocal flow}.
When \( \varrho_0 \) is fixed and no confusion is possible, we will write \( Z_t \) instead of \( Z^{\varrho_0}_t \) to simplify the notation.

Remark that~\eqref{eq:flowZ} is a particular case of the ``multi-agent system in the Lagrangian form'':
\begin{equation}
  \label{eq:Lagrange}
    \dot X^{\varrho_0}_t(x) = F_t\left(x,X^{\varrho_0}_t(x), X^{\varrho_0}_{t\sharp}\varrho_0\right),\quad X_0^{\varrho_0}(x)=x.
  \end{equation}
Such equations were considered, for instance, in~\cite{cavagnariLEK}.

\begin{proposition}
  \label{prop:Lagrange}
  Let \( F=F_t(x,y,\mu) \) be measurable in \( t \), \( M \)-Lipschitz in \( x \), \( y \), \(\mu\), for some \( M>0 \), and bounded.
  Then,~\eqref{eq:Lagrange} has a unique solution \( X^{\varrho_0} \).
  Moreover, the map \( X^{\varrho_0}_t \) is \( (1+MT)e^{Mt} \)-Lipschitz for each \( t\in [0,T] \).
\end{proposition}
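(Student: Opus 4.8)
The plan is to prove existence and uniqueness of the solution to the fixed-point problem~\eqref{eq:Lagrange} by a standard Banach contraction argument in a suitable complete metric space, and then establish the Lipschitz bound on $X^{\varrho_0}_t$ by Gr\"onwall's inequality applied pointwise in the two space variables.

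\emph{Step 1: Reformulation as a fixed point.} First I would rewrite~\eqref{eq:Lagrange} in integral form: $X_t(x) = x + \int_0^t F_s(x, X_s(x), X_{s\sharp}\varrho_0)\,ds$. The natural space to work in is $C([0,T]; \Lip(\mathbb{R}^d;\mathbb{R}^d))$, or — since the right-hand side is bounded by $M$ — the closed subset of maps $X$ with $X_0 = \id$ and $|X_t(x) - x| \le Mt$ for all $t,x$, which keeps the pushforward $X_{t\sharp}\varrho_0$ inside a fixed bounded (hence $\mathcal{W}_2$-bounded) subset of $\mathcal{P}_c(\mathbb{R}^d)$. Define the map $\mathcal{T}$ sending $X$ to $(t,x) \mapsto x + \int_0^t F_s(x, X_s(x), X_{s\sharp}\varrho_0)\,ds$.

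\emph{Step 2: Contraction estimate.} Given two curves $X, \tilde X$, I would estimate
\[
\left|\mathcal{T}(X)_t(x) - \mathcal{T}(\tilde X)_t(x)\right| \le \int_0^t M\left(\left|X_s(x) - \tilde X_s(x)\right| + \mathcal{W}_2\!\left(X_{s\sharp}\varrho_0, \tilde X_{s\sharp}\varrho_0\right)\right)ds,
\]
using $M$-Lipschitzianity of $F$ in $y$ and $\mu$. Since $\mathcal{W}_2(X_{s\sharp}\varrho_0, \tilde X_{s\sharp}\varrho_0) \le \|X_s - \tilde X_s\|_{L^2(\varrho_0)} \le \|X_s - \tilde X_s\|_\infty$ (transporting along the obvious coupling), the right-hand side is bounded by $2M\int_0^t \sup_x|X_s(x) - \tilde X_s(x)|\,ds$. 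Taking the sup over $x$ and then a weighted sup-norm $\sup_t e^{-\lambda t}\|\cdot\|_\infty$ with $\lambda$ large (equivalently, iterating $\mathcal{T}$ and getting a factor $(2MT)^n/n!$) makes $\mathcal{T}$ a contraction, so the Banach fixed point theorem yields a unique $X^{\varrho_0}$.

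\emph{Step 3: The Lipschitz bound.} For the quantitative estimate I would fix $x, x' \in \mathbb{R}^d$, set $g(t) := |X_t(x) - X_t(x')|$, and use the integral equation together with $M$-Lipschitzianity of $F$ in all three arguments. The key subtlety — and the main thing to handle carefully — is the $\mu$-dependence: here $\mu = X_{t\sharp}\varrho_0$ does \emph{not} vary with the pair $(x,x')$, so the $\mathcal{W}_2$ term contributes a fixed source term rather than something controlled by $g$. Concretely, to bound $\lip(X_t)$ I would instead take the supremum over all pairs: let $\ell(t) := \lip(X_t) = \sup_{x \ne x'} g(t)/|x-x'|$; from
\[
g(t) \le |x - x'| + \int_0^t M\left(|x-x'| + g(s) + \mathcal{W}_2(X_{s\sharp}\varrho_0, X_{s\sharp}\varrho_0)\right)ds = |x-x'|(1 + Mt) + M\int_0^t g(s)\,ds,
\]
(the $\mathcal{W}_2$ term vanishes identically) dividing by $|x-x'|$ and taking the sup gives $\ell(t) \le 1 + Mt + M\int_0^t \ell(s)\,ds$. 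Applying Lemma~\ref{lem:diffineq} with $a \equiv M$, $b(s) = M$ and initial value $1$ to (an absolutely continuous majorant of) $\ell$ — or integrating the inequality directly — yields $\ell(t) \le e^{Mt}\left(1 + \int_0^t e^{-Ms} M\,ds\right) = e^{Mt}(1 + 1 - e^{-Mt}) = 2e^{Mt} - 1 \le (1 + MT)e^{Mt}$, since $2e^{Mt} - 1 \le (1+MT)e^{Mt}$ is equivalent to $e^{Mt} \ge \tfrac{1}{1 - MT}\cdot\ldots$; more simply, $2e^{Mt}-1 \le 2e^{Mt} \le (1+MT)e^{Mt}$ fails only if $MT<1$, so to be safe I would just bound $1 + Mt + M\int_0^t \ell\,ds$ by Gr\"onwall to get $\ell(t) \le (1+Mt)e^{Mt} \le (1+MT)e^{Mt}$ directly, using that $\ell(t) \le (1+Mt) + M\int_0^t \ell(s)ds$ and the function $t \mapsto (1+Mt)e^{Mt}$ satisfies this with equality. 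This gives the claimed bound. The main obstacle is simply keeping the roles of the three arguments of $F$ straight — the $y$-variable feeds back into the Gr\"onwall loop while the $\mu$-variable does not, once one fixes attention on a single solution curve.
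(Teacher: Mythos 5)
Your approach is essentially correct and arrives at the right bound, but it diverges from the paper's proof in two respects and contains one incorrect intermediate claim. For existence and uniqueness, you give a self-contained Banach contraction argument in a weighted sup-norm, whereas the paper simply cites~\cite[Proposition~4.8]{cavagnariLEK}; your route is more elementary and avoids the external reference, at the cost of spelling out the estimates. For the Lipschitz bound, both arguments rest on the same crucial observation: the $\mu$-argument of $F$ is the same at both points $x$ and $x'$ along a single solution, so the $\mathcal{W}_2$ contribution vanishes. The paper then immediately bounds $Mt|x-x'|\le MT|x-x'|$ and applies the standard constant-coefficient Gr\"onwall inequality to $g(t)\le(1+MT)|x-x'|+M\int_0^t g(s)\,ds$, obtaining $(1+MT)e^{Mt}$ in one line. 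You instead keep $(1+Mt)$ inside and then assert that $t\mapsto(1+Mt)e^{Mt}$ satisfies $\ell(t)=(1+Mt)+M\int_0^t\ell(s)\,ds$ with equality; this is false — differentiating gives $Me^{Mt}(2+Mt)$ on the left but $M+M(1+Mt)e^{Mt}$ on the right. Your stated conclusion $\ell(t)\le(1+Mt)e^{Mt}$ is still correct, via the Gr\"onwall variant with a non-decreasing inhomogeneity $C(t)$ (which yields $\ell(t)\le C(t)e^{Mt}$), and in fact your earlier answer $\ell(t)\le 2e^{Mt}-1$ was already sufficient: $2e^{Mt}-1\le(1+Mt)e^{Mt}$ is equivalent to $1-e^{-Mt}\le Mt$, which holds for all $t\ge 0$, so the worry about $MT<1$ was unfounded. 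The paper's early replacement of $Mt$ by $MT$ sidesteps this whole detour.
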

\begin{proof}
  The existence and uniqueness of \( X^{\varrho_0} \) follows from~\cite[Proposition 4.8]{cavagnariLEK}.
  We now show that \( X_t^{\varrho_0} \) is Lipschitz for all \( t\in [0,T] \).
  Since
  \[
    X^{\varrho_0}_t(x) = x + \int_0^tF_s\left(x,X^{\varrho_0}_s(x), X^{\varrho_0}_{s\sharp}\varrho_0\right)\,ds,
  \]
  we conclude that
  \begin{align*}
    \left|X^{\varrho_0}_t(x)-X^{\varrho_0}_t(y)\right|
    &\le |x-y| + M\int_0^t \left(|x-y| + \left|X^{\varrho_0}_s(x)-X^{\varrho_0}_s(y)\right|\right)\,d s\\
    &\le (1+MT)|x-y| + M\int_0^t \left|X^{\varrho_0}_s(x)-X^{\varrho_0}_s(y)\right|\,d s,
  \end{align*}
  for all \( x,y\in \mathbb{R}^d \) and \( t\in [0,T] \).
  By Gr\"onwall's lemma, we have that \( X_t^{\varrho_0} \) is Lipschitz, with Lipschitz constant \( (1+MT)e^{Mt} \), for each \( t\in [0,T] \).
\end{proof}

In contrast to~\eqref{eq:flowZ}, the solution of~\eqref{eq:Lagrange} is not necessarily a homeomorphism for each \( t \).
As an example, if \( F_t(x,y,\mu) = -x \), then \( X_t^{\varrho_0}(x) = x -tx \).
Hence, for \( t=1 \), we have \( X_{t}^{\varrho_0}\equiv 0 \).

\section{Wasserstein space}
\label{sec:wass}


In this section, we collect several definitions and standard results about the structure of  \emph{Wasserstein spaces}.
For a more detailed discussion, we refer to the books \cite{AGS05,santambrogioOptimalTransportApplied2015,villaniTopicsOptimalTransportation2003,zbMATH05306371}.

We begin by reviewing the standard concepts of pushforward measure and transport plan.

\begin{definition}
  Let \(\mu\) be a Borel probability measure on \(\mathbb R^m\) and  \( f\colon \mathbb{R}^m\to \mathbb{R}^n \) be a Borel map.
  The probability measure \(f_\sharp\mu\) on \(\mathbb R^n\) defined by
\begin{displaymath}
  (f_{\sharp}\mu)(A):= \mu\left(f^{-1}(A)\right),\quad \text{for all Borel sets }A\subset Y,
\end{displaymath}
is called \emph{the pushforward} of \(\mu\) by \(f\).
\end{definition}

\begin{definition}
Let \(\mu\) and  \(\nu\) be any probability measures on \(\mathbb R^d\).
A \emph{transport plan} \( \Pi \) between \(\mu\) and \(\nu\) is a probability measure on \( \mathbb{R}^{d}\times \mathbb{R}^{d} \) whose projections on the first and the second factor are
\( \mu \) and \( \nu \), respectively. In other words, 
\begin{displaymath}
  \pi^{1}_{\sharp}\Pi = \mu,\quad \pi^{2}_{\sharp}\Pi=\nu,
\end{displaymath}
where \( \pi^{1}, \pi^{2} \) are the projection maps on the factors, i.e., \( (x,y)\mapsto x \) and \( (x,y)\mapsto y \), respectfully.
The set of all transport plans  between \( \mu \) and \( \nu \) is denoted by  \( \Gamma(\mu,\nu) \).
\end{definition}


\begin{definition}
The space \( \mathcal{P}_{2}(\mathbb{R}^{d}) \) of probability measures \(\mu\) on \( \mathbb{R}^d \) with finite second moments, i.e., such that \(\int|x|^2\,d\mu(x)<\infty\) is called the quadratic \emph{Wasserstein space}.
\end{definition}

\begin{definition}
  Given a pair of measures \( \mu,\nu\in \mathcal{P}_2(\mathbb{R}^d) \), the corresponding \emph{optimal transportation problem} is the minimization problem
  \[
  \displaystyle \inf_{\Pi\in\Gamma(\mu,\nu)}\int|x-y|^{2}\,d \Pi(x,y).
  \]
  Any transport plan \( \Pi\in \Gamma(\mu,\nu) \) that solves it is called \emph{optimal}.
 \end{definition}




One can use optimal transportation to define a  distance on the space \( \mathcal{P}_{2}(\mathbb{R}^{d}) \), turning it into a metric space.

\begin{definition} The Wasserstein distance is
\begin{equation}
  \label{eq:wasserstein}
  \mathcal{W}_2(\mu,\nu) := \left(\inf_{\Pi\in\Gamma(\mu,\nu)}\int|x-y|^{2}\,d \Pi(x,y)\right)^{1/2}.
\end{equation}
\end{definition}
\begin{proposition} The Wasserstein distance is a distance~ on  \( \mathcal P_{2}(\mathbb{R}^{d}) \).
The space \( \mathcal P_{2}(\mathbb{R}^{d}) \) equipped with this distance is a \emph{complete separable metric space}.
\end{proposition}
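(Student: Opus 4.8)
The plan is to verify the metric axioms one at a time and then establish completeness and separability, relying throughout on two standard tools: the existence of optimal transport plans and the gluing lemma for probability measures.

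First I would check finiteness: for $\mu,\nu\in\mathcal{P}_2(\mathbb{R}^d)$ the product measure $\mu\otimes\nu$ belongs to $\Gamma(\mu,\nu)$, and since $|x-y|^2\le 2|x|^2+2|y|^2$ one gets $\int|x-y|^2\,d(\mu\otimes\nu)\le 2\int|x|^2\,d\mu+2\int|y|^2\,d\nu<\infty$, so $\mathcal{W}_2(\mu,\nu)<\infty$. Symmetry is immediate because $(x,y)\mapsto(y,x)$ maps $\Gamma(\mu,\nu)$ bijectively onto $\Gamma(\nu,\mu)$ while preserving the cost. For the identity of indiscernibles: the plan $(\id\times\id)_\sharp\mu$ has zero cost, so $\mathcal{W}_2(\mu,\mu)=0$; conversely, if $\mathcal{W}_2(\mu,\nu)=0$ then an optimal plan $\Pi$ --- which exists, see below --- satisfies $\int|x-y|^2\,d\Pi=0$, hence is concentrated on the diagonal, forcing $\mu=\pi^1_\sharp\Pi=\pi^2_\sharp\Pi=\nu$. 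Existence of optimal plans, which I use here and again in the next step, follows by the direct method: $\Gamma(\mu,\nu)$ is tight since its elements have fixed marginals, hence weakly sequentially compact by Prokhorov's theorem, and $\Pi\mapsto\int|x-y|^2\,d\Pi$ is weakly lower semicontinuous as the integral of a nonnegative lower semicontinuous function.

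The main step is the triangle inequality, which I expect to be the principal obstacle. Given $\mu_1,\mu_2,\mu_3\in\mathcal{P}_2(\mathbb{R}^d)$, pick optimal plans $\Pi_{12}\in\Gamma(\mu_1,\mu_2)$ and $\Pi_{23}\in\Gamma(\mu_2,\mu_3)$ and invoke the gluing lemma (a disintegration argument) to produce a probability measure $\Pi$ on $\mathbb{R}^d\times\mathbb{R}^d\times\mathbb{R}^d$ whose $(1,2)$- and $(2,3)$-marginals are $\Pi_{12}$ and $\Pi_{23}$. Its $(1,3)$-marginal lies in $\Gamma(\mu_1,\mu_3)$, so applying Minkowski's inequality to the coordinate-difference functions in $L^2(\Pi)$,
\[
\mathcal{W}_2(\mu_1,\mu_3)\le\Big(\int|x-z|^2\,d\Pi\Big)^{1/2}\le\Big(\int|x-y|^2\,d\Pi\Big)^{1/2}+\Big(\int|y-z|^2\,d\Pi\Big)^{1/2},
\]
and the last two terms equal $\mathcal{W}_2(\mu_1,\mu_2)$ and $\mathcal{W}_2(\mu_2,\mu_3)$ by optimality of $\Pi_{12}$ and $\Pi_{23}$.

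For completeness, let $(\mu_n)$ be $\mathcal{W}_2$-Cauchy. Testing against $\delta_0$ gives $\int|x|^2\,d\mu_n=\mathcal{W}_2(\mu_n,\delta_0)^2$, which is bounded in $n$, so $(\mu_n)$ is tight by Markov's inequality; extract a subsequence converging weakly to some $\mu$, which belongs to $\mathcal{P}_2(\mathbb{R}^d)$ by Fatou's lemma. A short argument shows that a $\mathcal{W}_2$-Cauchy sequence has uniformly integrable second moments, i.e. $\lim_{R\to\infty}\sup_n\int_{|x|>R}|x|^2\,d\mu_n=0$; together with weak convergence this upgrades to $\mathcal{W}_2(\mu_{n_k},\mu)\to 0$, and since the whole sequence is Cauchy, $\mathcal{W}_2(\mu_n,\mu)\to 0$. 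Finally, separability follows by a two-step approximation: using uniform integrability of $|x|^2$ under a fixed $\mu\in\mathcal{P}_2(\mathbb{R}^d)$ one first approximates $\mu$ in $\mathcal{W}_2$ by its (renormalized) restriction to a large ball, and then approximates a compactly supported measure by a finite convex combination of Dirac masses at rational points with rational weights, obtained by partitioning the support into small cubes; this countable family is $\mathcal{W}_2$-dense. The delicate ingredients are the gluing lemma and the uniform-integrability estimate behind completeness, both classical and available in the references cited above.
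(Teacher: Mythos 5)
Your proposal is correct and is the standard textbook argument for this statement. The paper itself does not give a proof but simply defers to \cite[Theorem 7.3]{villaniTopicsOptimalTransportation2003} and \cite[Theorem~6.18]{zbMATH05306371}; your argument (product coupling for finiteness, gluing lemma plus Minkowski for the triangle inequality, Prokhorov plus lower semicontinuity for optimal plans, uniform integrability of second moments for completeness, and atomic approximation for separability) is precisely the proof given in those references.
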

\begin{proof}
    See \cite[Theorem 7.3]{villaniTopicsOptimalTransportation2003} and \cite[Theorem~6.18]{zbMATH05306371}.
\end{proof}

\begin{proposition} For any Lipschitz maps \( f,g\colon \mathbb{R}^d\to \mathbb{R}^d \), it holds
\begin{equation}
  \label{eq:W2ineq}
  \mathcal{W}_2(f_{\sharp}\mu,f_{\sharp}\nu)\le \lip(f) \mathcal{W}_2(\mu,\nu),\quad \mathcal{W}_2(f_{\sharp}\mu,g_{\sharp}\mu)\le
  \left(\int |f-g|^2\,d\mu\right)^{1/2},
\end{equation}
\end{proposition}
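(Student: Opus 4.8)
The plan is, in each of the two estimates, to exhibit an explicit admissible transport plan and bound its cost, using only the change-of-variables formula for pushforwards and the pointwise estimates satisfied by $f$ and $g$. (Note that $\mathcal{W}_2(f_{\sharp}\mu,f_{\sharp}\nu)$ and $\mathcal{W}_2(f_{\sharp}\mu,g_{\sharp}\mu)$ are finite whenever $\mu,\nu\in\mathcal{P}_2(\mathbb{R}^d)$: a Lipschitz map has at most linear growth, hence maps $\mathcal{P}_2(\mathbb{R}^d)$ into itself.)

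For the first inequality I would fix $\delta>0$ and pick a plan $\Pi\in\Gamma(\mu,\nu)$ with $\int|x-y|^2\,d\Pi(x,y)\le\mathcal{W}_2^2(\mu,\nu)+\delta$ (an optimal plan exists, but near-optimality suffices). Writing $f\times f\colon(x,y)\mapsto(f(x),f(y))$ and setting $\widetilde\Pi:=(f\times f)_{\sharp}\Pi$, the composition rule $h_{\sharp}(k_{\sharp}\lambda)=(h\circ k)_{\sharp}\lambda$ applied to the coordinate projections $\pi^{1},\pi^{2}$ shows that $\widetilde\Pi\in\Gamma(f_{\sharp}\mu,f_{\sharp}\nu)$. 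Then, by the change-of-variables formula and $|f(x)-f(y)|\le\lip(f)\,|x-y|$,
\[
\mathcal{W}_2^2(f_{\sharp}\mu,f_{\sharp}\nu)\le\int|x'-y'|^2\,d\widetilde\Pi(x',y')=\int|f(x)-f(y)|^2\,d\Pi(x,y)\le\lip(f)^2\bigl(\mathcal{W}_2^2(\mu,\nu)+\delta\bigr);
\]
letting $\delta\to0$ and taking square roots gives the claim.

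For the second inequality I would take the map $G\colon\mathbb{R}^d\to\mathbb{R}^d\times\mathbb{R}^d$, $G(x):=(f(x),g(x))$, and set $\Pi_0:=G_{\sharp}\mu$. The same composition rule gives $\pi^{1}_{\sharp}\Pi_0=f_{\sharp}\mu$ and $\pi^{2}_{\sharp}\Pi_0=g_{\sharp}\mu$, so $\Pi_0\in\Gamma(f_{\sharp}\mu,g_{\sharp}\mu)$, and therefore
\[
\mathcal{W}_2^2(f_{\sharp}\mu,g_{\sharp}\mu)\le\int|x'-y'|^2\,d\Pi_0(x',y')=\int|f(x)-g(x)|^2\,d\mu(x),
\]
which after taking square roots is the second inequality. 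I do not expect any genuine obstacle here: both bounds are immediate once the competitor plans are written down, and the only step requiring (routine) care is the verification of their marginals via the pushforward composition rule.
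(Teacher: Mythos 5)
Your argument is correct and complete. The paper itself does not prove this proposition but simply cites \cite{piccoliTransportEquationNonlocal2013}; your proof is the standard one, constructing competitor plans $(f\times f)_\sharp\Pi$ and $(f,g)_\sharp\mu$ and verifying their marginals via the pushforward composition rule, which is exactly what one finds in the reference. One small simplification you could make: since $\mu,\nu\in\mathcal{P}_2(\mathbb{R}^d)$, an optimal plan $\Pi$ actually exists (the infimum in the Kantorovich problem is attained), so the $\delta$-approximation step, while harmless, is unnecessary.
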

\begin{proof}
    See e.g. \cite{piccoliTransportEquationNonlocal2013}.
\end{proof}




\section{Proof of the main theorem}
\label{sec:proof}
In this section, we prove Theorem~\ref{thm:main}.
We first define in Section~\ref{subsec:cutoff} a specific cut-off function, which we use later to write a candidate stabilizing control.
The construction of this control is discussed in Sections~\ref{subsec:local_stable}, \ref{subsec:nonlocal}, first for the linear and then for the general case.
In these sections, we also show that the obtained control is well-defined and admissible, which is the most technical part of the proof.
In Section~\ref{subsec:stabil} we prove that the chosen control is indeed stabilizing.
Throughout Sections~\ref{subsec:local_stable}--\ref{subsec:stabil} we impose an additional regularity assumption \( (A_3) \) on \( V \); we show in Section~\ref{subsec:A3} that it can be eliminated.
Section~\ref{subsec:corollary} contains a proof of Corollary~\ref{cor:main}.
In Section \ref{s-sharp}, we finally discuss sharpness of the result.

The theorem will be first proved with the following additional assumption:
\begin{tcolorbox}
  \textbf{Temporary assumption \((A_3)\):} \( V \) is continuously differentiable in \( x \).
\end{tcolorbox}

Later, in Section~\ref{subsec:A3}, we will show that this assumption can be omitted.

In fact, we will prove a slightly more general version of Theorem~\ref{thm:main}, that deals with sets \( \Gamma^{\varepsilon_0,L}_{\varepsilon_{*}}(\varrho_0) \), defined by~\eqref{eq:Gamma2}, instead of \( \Gamma_{\varepsilon_*}(\varrho_0) \).
It can be stated as follows.

\begin{theorem}
  \label{thm:main2}
  Let the assumptions of Theorem~\ref{thm:main} hold and \( L>0 \) be fixed.
  Then, there exist $\kappa_{*}>0$ and \( \varepsilon_{*}\in (0,\varepsilon_0) \) such that the set \( \Gamma^{\varepsilon_{0},L}_{\varepsilon_{*}}(\varrho_0) \) is \( \kappa \)-stabilizable around the reference trajectory $\mu_t$ of equation~\eqref{eq:conteq}, for any \( \kappa\in (0,\kappa_{*}) \).

  The numbers \( \kappa_{*} \), \( \varepsilon_{*} \) as well as \( C \) from the definition of \( \kappa \)-stabilizability only depend on \( \varepsilon_0,L,T,M,\omega,\spt(\varrho_0)\).
\end{theorem}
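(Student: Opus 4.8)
The plan is to remove the nonlocality by "freezing" it, build an (essentially exact) stabilizing control for the resulting nonautonomous linear transport problem, and then show that this very control, plugged back into the genuine nonlocal equation \eqref{eq:conteq}, still stabilizes, at the price of an $O(\varepsilon^{1+\kappa})$ error.

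\emph{Setup and geometry.} Fix $\varrho=\varrho_\varepsilon\in\Gamma^{\varepsilon_0,L}_{\varepsilon_*}(\varrho_0)$ with perturbation map $\Psi_\varepsilon$, so $\Psi_{\varepsilon\sharp}\varrho_\varepsilon=\varrho_0$, $\lip(\Psi_\varepsilon)\le L$, $|\Psi_\varepsilon(x)-x|\le\varepsilon L$, and $D\Psi_\varepsilon\ge 0$ whenever defined. From the last property, together with the Lipschitz bound, one checks (via the Cauchy--Schwarz/monotonicity inequality $(1-s)|x-y|^2\le\langle\gamma^s_\varepsilon(x)-\gamma^s_\varepsilon(y),x-y\rangle$) that the convex interpolations $\gamma^s_\varepsilon:=(1-s)\id+s\Psi_\varepsilon$ are bi-Lipschitz for $s<1$; this is exactly what makes the control below well defined, and is where condition (c) of Definition~\ref{def:reg_pert} is used. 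Write $Z_t:=Z^{\varrho_0}_t=\Phi_{0,t}$ for the reference nonlocal flow; under $(A_1)$ and $(A_3)$ it is a bi-Lipschitz $C^1$-diffeomorphism by Proposition~\ref{prop:basic}. Using $(A_2)$, compactness of $\spt(\varrho_0)$ and continuity of $(t,x)\mapsto Z_t(x)$, I extract a finite cover $\spt(\varrho_0)=\bigcup_{i=1}^N V_i$ with open intervals $\overline{J_i}\subset(0,T)$ and a number $\rho_0>0$ such that $\dist(Z_t(V_i),\partial\omega)\ge\rho_0$ for all $t\in\overline{J_i}$, and a Borel partition $E_i\subset V_i$ of $\spt(\varrho_0)$.

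\emph{The frozen (linear) problem.} Replace $V_t(x,\mu)$ by $\bar V_t(x):=V_t(x,\mu_t)$, whose flow is precisely $Z$. On each piece $E_i$ I let the ideal controlled flow $Y_t$ follow $Z$ outside a subinterval $[a_i,b_i]\subset J_i$ of fixed (small) length, and on $[a_i,b_i]$ I steer it along $Y_t(x)=Z_t\big(\gamma^{s_i(t)}_\varepsilon(x)\big)$ with $s_i$ increasing from $0$ to $1$ (or to $1-\varepsilon$, costing only $O(\varepsilon^2)$, if one does not want to assume $\Psi_\varepsilon$ injective), so that $Y_{b_i}|_{E_i}=Z_{b_i}\circ\Psi_\varepsilon|_{E_i}$ and hence $Y_T=Z_T\circ\Psi_\varepsilon$ on $\spt(\varrho)$. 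Differentiating, $Y$ is generated by $\bar V_t+w_t$ with $w_t$ supported near $Z_t(E_i)$ for $t\in[a_i,b_i]$; multiplying $w_t$ by the cut-off $\theta$ of Section~\ref{subsec:cutoff} (equal to $1$ on the $\rho_0/2$-tube around the curves $t\mapsto Z_t(V_i)$, $t\in J_i$, and supported in $\omega$) gives the admissible localized control $u^\varepsilon_t:=\theta w_t$, which is unchanged along the trajectory. Then $Y_{T\sharp}\varrho=Z_{T\sharp}\Psi_{\varepsilon\sharp}\varrho=Z_{T\sharp}\varrho_0=\mu_T$, i.e.\ the frozen problem is stabilized exactly. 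The delicate point here — the "most technical part" announced in Section~\ref{sec:proof} — is to choose the cover and the windows so that the moving sets $Y_t(E_i)$ stay mutually separated inside the tube, making $u^\varepsilon$ a well-defined, globally Lipschitz, localized field with $\|u^\varepsilon_t\|_\infty=O(\varepsilon)$ and $\lip(u^\varepsilon_t)=O(1)$ (here $\Psi_\varepsilon$ being $L$-Lipschitz, $Z_t$ a diffeomorphism, and $\theta$ having Lipschitz constant $\le 2/\rho_0$).

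\emph{The nonlocal correction and the rate — the main obstacle.} Feed $u^\varepsilon$ into \eqref{eq:conteq} and set $\mu^u_t=\Phi^u_{0,t\sharp}\varrho$. With $e(t):=\|\Phi^u_{0,t}-Y_t\|_{L^2(\varrho)}$, the $\mu$-Lipschitz bound for $V$, the estimate $\mathcal{W}_2(\mu^u_t,\mu_t)\le e(t)+\mathcal{W}_2(Y_{t\sharp}\varrho,\mu_t)$ and Lemma~\ref{lem:diffineq}, one first gets the crude bound $\sup_t\mathcal{W}_2(\mu^u_t,\mu_t)=O(\varepsilon)$; taking $\varepsilon_*$ small keeps the true trajectory inside $\{\theta=1\}$, so $u^\varepsilon$ acts on it exactly as designed, and gives $\mathcal{W}_2(\mu^u_T,\mu_T)\le e(T)=O(\varepsilon)$, i.e.\ $\kappa=0$. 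The real work is to upgrade this to $O(\varepsilon^{1+\kappa})$: the naive Grönwall estimate is lossy, and the improvement must come from the observation that any drift that the nonlocal discrepancy $V_t(\cdot,\mu^u_t)-V_t(\cdot,\mu_t)$ inflicts on a piece $E_i$ before the time $a_i$ at which it is corrected inside $\omega$ is completely erased by that correction, so only the post-correction discrepancy survives. I would implement this by re-designing $u^\varepsilon$ in stages: the $k$-th stage adds, inside the very same windows $[a_i,b_i]$, a localized correction pre-compensating for the residual nonlocal feedback left by the first $k-1$ stages. Each stage shrinks the final error by a fixed factor depending on $T,M,\omega,\spt(\varrho_0)$ (and $\varepsilon_0,L$), while inflating $\lip(u^\varepsilon_t)$ only additively; stopping after $\sim\log(1/\varepsilon)$ stages then yields $\mathcal{W}_2(\mu^u_T,\mu_T)=O(\varepsilon^{1+\kappa})$ for every $\kappa<\kappa_*$, with $\kappa_*$ the exponent associated to the per-stage factor — the $\,-1/\log\varepsilon\,$ in Corollary~\ref{cor:main} is the trace of this logarithmic number of stages. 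Since $\Gamma_{\varepsilon_*}(\varrho_0)\subset\Gamma^{\log 2,2}_{\varepsilon_*}(\varrho_0)$ by Lemma~\ref{lem:flows}, Theorem~\ref{thm:main} follows from Theorem~\ref{thm:main2}. Finally, assumption $(A_3)$ is removed in Section~\ref{subsec:A3} by mollifying $V$ in $x$: the mollified fields are $C^1$, converge to $V$ locally uniformly with the same constant $M$, and the associated flows, controls and trajectories converge, so the estimates pass to the limit.
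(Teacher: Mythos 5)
Your proposal takes a genuinely different route from the paper's, but it contains two gaps that I do not think can be repaired along the lines you sketch. The paper does \emph{not} steer each piece of $\spt(\varrho_0)$ exactly to its target inside its own time window and then iterate: instead it introduces a single globally-defined damping control $u_t(x)=\alpha(x)\bigl(Y_t\circ X_t^{-1}(x)-x\bigr)$, where $Y_t=Z_t\circ\Psi_\varepsilon$ and $\alpha$ is a cut-off of \emph{height} $a=-\kappa M\log\varepsilon/r$ (not height $1$), and obtains the rate $\varepsilon^{1+\kappa}$ in a single pass from the damping factor $e^{-\int_0^T\alpha(X_s)\,ds}\le e^{-a r/M}=\varepsilon^\kappa$ accrued during the time of length $r/M$ each trajectory spends in $\omega_0$. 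The difficult steps then become showing that $X_t$ is a bi-Lipschitz homeomorphism (Lemmas~\ref{lem:Xdiff}, \ref{lem:Xinv}, via Clarke's generalized Jacobian and the positivity of $D\Psi_\varepsilon$) so that $u$ is a genuine admissible control, and carrying the $O(\varepsilon)$ a priori bound to the nonlocal case (Section~\ref{subsec:nonlocal}).

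The first gap in your argument is the piecewise steering itself. Your ideal flow $Y_t(x)=Z_t\bigl(\gamma_\varepsilon^{s_i(t)}(x)\bigr)$ for $x\in E_i$ uses a different internal clock $s_i$ on each cell, so at a fixed time $t$ the map $Y_t$ is discontinuous across the boundaries $\partial E_i$ (two adjacent points may be at different stages of the deformation). Consequently $Y_t$ is not a homeomorphism and the generating field $w_t=\partial_t Y_t\circ Y_t^{-1}-\bar V_t$ is not defined, let alone Lipschitz. You acknowledge that one must ``choose the cover and the windows so that the moving sets $Y_t(E_i)$ stay mutually separated,'' but adjacent cells in a finite Borel partition of $\spt(\varrho_0)$ cannot be separated: their closures touch, and the flows of adjacent cells under $Z_t$ are arbitrarily close at the common boundary. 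The paper avoids this entirely by acting on \emph{all} points simultaneously through the spatially continuous damping $\alpha(X_t)(Y_t-X_t)$; no partition into pieces with disjoint windows is ever used.

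The second gap is the iterative pre-compensation. You assert that each stage shrinks the final error by a fixed factor while increasing $\lip(u^\varepsilon_t)$ only additively, so that $\sim\log(1/\varepsilon)$ stages give $O(\varepsilon^{1+\kappa})$, but no argument is given for the geometric decay, and the scheme itself is circular: ``pre-compensating for the residual nonlocal feedback left by the first $k-1$ stages'' requires anticipating the future discrepancy $V_t(\cdot,\mu^{u^{(k)}}_t)-V_t(\cdot,\mu_t)$, which depends on the control being designed (a fixed-point problem whose well-posedness and contraction rate are exactly what needs to be proved). Moreover the nonlocal feedback accumulates precisely \emph{after} each piece leaves its window, when no correction can reach it, and different pieces leave at different times, so the coupling between pieces is nontrivial. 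The paper sidesteps all of this by proving the key pointwise estimate in one shot: from $\frac{d}{dt}|X_t-Y_t|^2\le 2M|X_t-Y_t|^2+2M|X_t-Y_t|\bigl(\int|X_t-Y_t|^2\,d\varrho_\varepsilon\bigr)^{1/2}-2\alpha(X_t)|X_t-Y_t|^2$, integration against $\varrho_\varepsilon$ and a Cauchy--Schwarz step yield $\int|X_T-Y_T|^2\,d\varrho_\varepsilon\le O(\varepsilon^2)\,e^{-2\int_0^T\alpha(X_s)\,ds}\le O(\varepsilon^{2+2\kappa})$ directly, with no iteration and no $\log(1/\varepsilon)$ budget on stages. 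Your reductions at the end (mollifying to remove $(A_3)$, and $\Gamma_{\varepsilon_*}\subset\Gamma^{\log 2,2}_{\varepsilon_*}$ via Lemma~\ref{lem:flows}) are correct and match the paper.
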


\subsection{Cut-off function}
\label{subsec:cutoff}

In this section, we define a cut-off function, that will be used later to choose a stabilizing control.
We first study some properties of the set \( \omega \).

\begin{lemma}
	\label{lem:common_time}
	Let Assumptions \( (A_{1,2}) \) hold. Then, there exist \( r>0 \) and a nonempty compact set \( \omega_0 \) with the following properties:
	\begin{itemize}
		\item \( \overline{B}_{r}(\omega_0)\subset \omega \);
		\item
		      for any \( x\in \spt (\varrho_0) \) the ball \( \overline{B}_{r}\left(Z_{t}(x)\right) \) belongs to \( \omega_0\) for a set of times \( t \) containing an interval of length \( r/M\).
	\end{itemize}
\end{lemma}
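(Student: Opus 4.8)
The idea is to exploit Assumption $(A_2)$ together with the compactness of $\spt(\varrho_0)$ to upgrade a pointwise crossing property into a uniform one, with a margin in space (the radius $r$) and in time (the length $r/M$). First I would observe that the map $(t,x)\mapsto Z_t(x)$ is continuous on $[0,T]\times\mathbb{R}^d$; indeed by Proposition~\ref{prop:basic} each $Z_t$ is $e^{Mt}$-Lipschitz in $x$, and by Statement~5 the curve $t\mapsto Z_t(x)$ satisfies $|Z_t(x)-Z_s(x)|\le M|t-s|$, so $Z$ is jointly continuous. Next, by Assumption $(A_2)$, for each $x\in\spt(\varrho_0)$ there is a time $t(x)\in(0,T)$ with $Z_{t(x)}(x)\in\omega$. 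Since $\omega$ is open and $Z$ is continuous, there are $\delta_x>0$ and $\eta_x>0$ such that $\overline{B}_{3\delta_x}\!\left(Z_t(x')\right)\subset\omega$ for all $x'\in B_{\eta_x}(x)$ and all $t$ in an interval around $t(x)$ of length at least, say, $2\eta_x$; shrinking if necessary we may also assume this interval is compactly contained in $(0,T)$.

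The balls $\{B_{\eta_x}(x)\}_{x\in\spt(\varrho_0)}$ cover the compact set $\spt(\varrho_0)$, so a finite subcover $B_{\eta_{x_1}}(x_1),\dots,B_{\eta_{x_N}}(x_N)$ exists. Set $r_0:=\min_i \delta_{x_i}>0$; then for every $x\in\spt(\varrho_0)$ there is an index $i$ with $x\in B_{\eta_{x_i}}(x_i)$, and hence a time-interval $I_x\subset(0,T)$ of length $\ge 2\eta_{x_i}$ along which $\overline{B}_{3r_0}\!\left(Z_t(x)\right)\subset\omega$. The only remaining mismatch is that the guaranteed interval length $2\eta_{x_i}$ varies with $i$ and need not equal $r/M$ for the same $r$; this is fixed by choosing $r$ small enough. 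Precisely, put $\ell:=\min_i 2\eta_{x_i}>0$ and choose $r:=\min\{r_0,\ M\ell\}$, so that $r\le r_0$ gives $\overline{B}_r(Z_t(x))\subset\overline{B}_{3r_0}(Z_t(x))\subset\omega$ on $I_x$, while $r/M\le\ell\le|I_x|$ ensures $I_x$ contains a subinterval of length exactly $r/M$.

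Finally I would define the compact set $\omega_0$. The second bullet asks that the ball $\overline{B}_r(Z_t(x))$ lie in $\omega_0$, not merely in $\omega$, along those times; so $\omega_0$ must be chosen after $r$. A clean choice is
\[
\omega_0:=\overline{\bigcup_{x\in\spt(\varrho_0)}\ \bigcup_{t\in I_x}\overline{B}_r\!\left(Z_t(x)\right)},
\]
where for definiteness $I_x$ denotes, for each $x$, a fixed closed subinterval of length $r/M$ of the interval produced above. This set is nonempty and bounded (because $\spt(\varrho_0)$ is compact and $|Z_t(x)-x|\le MT$, so everything stays in a fixed large ball), hence compact; by construction $\overline{B}_r(Z_t(x))\subset\omega_0$ for $t\in I_x$; and $\overline{B}_r(\omega_0)\subset\omega$ because a point of $\omega_0$ is a limit of points within distance $r$ of some $Z_t(x)$ with $t\in I_x$, and we arranged $\overline{B}_{2r}(Z_t(x))\subset\omega$ there (using $2r\le 3r_0$), so $\overline{B}_r(\omega_0)$ is contained in the closed set $\{y:\dist(y,\omega^c)\ge 0\}$; to be safe one takes $3r_0$ as above so that $\overline{B}_r(\omega_0)\subset\omega$ holds with room to spare.

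The main obstacle is purely bookkeeping: interleaving the three quantifiers correctly — the radius $r$ must be small enough both to fit inside $\omega$ (after a compactness argument on $\spt(\varrho_0)$) and to make $r/M$ no larger than the worst-case crossing-time window; and $\omega_0$ must be defined only after $r$ is fixed, as a closed union of the relevant tubes, so that the $r$-neighbourhood of $\omega_0$ still sits inside $\omega$. Once the uniform crossing window and the nesting $\overline{B}_r(\omega_0)\subset\omega$ are set up, there is no analytic difficulty.
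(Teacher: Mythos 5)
Your proposal is correct and follows essentially the same route as the paper's own proof: use Assumption $(A_2)$ pointwise, open a ball inside $\omega$ around each crossing point $Z_{t(x)}(x)$, propagate it over a short time window using the bound $|V|\le M$, extract a finite subcover of $\spt(\varrho_0)$, and take $r$ to be the smallest of the finitely many radii. The one cosmetic difference is the definition of $\omega_0$: the paper takes the finite union $\bigcup_i \overline{B}_{\frac{5}{2}R(x_i)}\left(Z_{t(x_i)}(x_i)\right)$, which is immediately compact and makes the inclusion $\overline{B}_r(\omega_0)\subset\omega$ a one-line check via $r\le R(x_i)$ and $\overline{B}_{\frac{7}{2}R(x_i)}(\cdot)\subset\omega$, whereas you close an infinite union of tubes and then verify boundedness and the $r$-buffer separately — both are valid, with the finite-ball choice being marginally tidier.
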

\begin{proof}
  Let \( K:=\spt (\varrho_0) \), that is compact.
	By Assumption \( (A_{2}) \), for any \( x\in K \), there exists \( t(x)>0 \) such that \( Z_{t(x)}(x)\in\omega \).
	Since \( \omega \) is open, we can find a \emph{closed} ball \( \overline{B}_{\frac{7}{2}R(x)}\left(Z_{t(x)}(x)\right)\subset \omega \) with \( R(x)>0 \).
	Since \( Z_t \) is a homeomorphism there is an \emph{open} ball \( B_{\tilde R(x)}(x) \) with \( \tilde R(x)>0 \) such that
	\[
		Z_{t(x)}\left(B_{\tilde R(x)}(x)\right)\subset \overline{B}_{R(x)}\left(Z_{t(x)}(x)\right)\subset \omega.
	\]
	By compactness of \( K \), there exists a finite set of points \( x_i\in K \), \( 1 \le i \le N \), such that
	\[
		K\subset \bigcup_{i=1}^{N}B_{\tilde{R}(x_{i})}(x_i).
	\]
	By boundedness of \( v \), it holds
	\[
		Z_{s}(x) \in \overline{B}_{M|s-t|}\left(Z_t(x)\right)\quad \forall x\in \mathbb{R}^{d}\quad \forall t,s >0.
	\]
	Then
	\[
		Z_{s}\left(B_{\tilde{R}(x_{i})}(x_{i})\right)\subset \overline{B}_{R(x_{i})+M|s-t(x_{i})|}\left(Z_{t(x_{i})}(x_{i})\right)\quad \forall s\in \mathbb{R}^{+}.
	\]
	That also implies
	\[
		\overline{B}_{R(x_{i})}\left(Z_{s}\left(B_{\tilde{R}(x_{i})}(x_{i})\right)\right)\subset \overline{B}_{2R(x_{i})+M|s-t(x_{i})|}\left(Z_{t(x_{i})}(x_{i})\right)\quad \forall s\in \mathbb{R}^{+}.
	\]
	Hence, if
	\[
		2R(x_{i})+M|s-t(x_{i})|\le \frac{5}{2}R(x_{i}),
	\]
	i.e.,
	\[
		s\in \left(t(x_i)-\frac{R(x_{i})}{2M}, t(x_i)+\frac{R(x_{i})}{2M}\right),
	\]
	the ball \( \overline{B}_{R(x_{i})}\left(Z_{s}(x)\right) \) belongs to \( \overline{B}_{\frac{5}{2}R(x_i)}\left(Z_{t(x_{i})}(x_{i})\right) \) for all \( x\in B_{\tilde{R}(x_i)}(x_i) \).
	We choose
	\[
		\omega_0 := \bigcup_{i=1}^{N}\overline{B}_{\frac{5}{2}R(x_i)}\left(Z_{t(x_i)}(x_i)\right),\quad r := \min\left\{R(x_{i})\;\colon\;1 \le i \le N\right\},
	\]
	to complete the proof.
\end{proof}

We now define a useful cut-off function \( \alpha\colon \mathbb{R}^d\to \mathbb{R} \).
First, we define an auxiliary function \( \alpha_0 \colon \omega_{0} \cup \left(\mathbb{R}^d\setminus  B_{r}(\omega_{0}) \right)\to \mathbb{R} \) by the rule
\[
	\alpha_{0}(x) =
	\begin{cases}
		0, & x\in \mathbb{R}^d\setminus B_{r}(\omega_{0}), \\
		a, & x\in \omega_0,
	\end{cases}
\]
where \( a \) is a \emph{positive} number to be defined later. Then, we extend it to the whole space \( \mathbb{R}^d \) as follows:
\begin{equation}
  \label{eq:alpha}
	\alpha(x) := \inf_{y\in\omega_{0} \cup \big(\mathbb{R}^d\setminus B_r(\omega_{0}) \big) }\left\{\alpha_{0}(y) + \frac{a}{r}|x-y|\right\}.
\end{equation}
The resulting map \( \alpha\colon \mathbb{R}^d\to \mathbb{R} \) has Lipschitz constant \( \lip(\alpha) = a/r \), and its range belongs to the segment \( [0,a] \).

\begin{remark}
  \label{rem:r}
Below, we will always assume that the cut-off function \( \alpha \) is \emph{continuously differentiable}.
If this is not the case,  we can always replace \(\alpha\) with its ``mollified'' version \( \alpha^{\varepsilon} = \eta^{\varepsilon}*\alpha \), where \( \eta^{\varepsilon} \) is the standard mollification kernel:
\begin{equation}
\label{eq:mollifier}
\eta^{\varepsilon}(x) = \varepsilon^{-d}\phi(x/\varepsilon),
\quad
\phi(|x|) =
\begin{cases}
\beta e^{-\frac{1}{1-|x|^2}} &|x|\le1\\
0 &|x|>1
\end{cases},
\quad
\int \phi\,dx=1.
\end{equation}
After this change, the upper bound of the cut-off function and its Lipschitz constant do not increase.
On the other hand, mollification changes the support: \( \spt (\alpha^{\varepsilon}) \subset  \overline{B}_{\varepsilon}(\spt (\alpha)) \).
Still, after a suitable adjustment of \( r \), it holds \( \spt (\alpha^{\varepsilon})\subset \omega \) for a sufficiently small \( \varepsilon \).
\end{remark}

\subsection{Admissible control: linear case}
\label{subsec:local_stable}
Here we define a (potentially) stabilizing control for the case in which the vector field \( V_t \) does not depend on the measure, i.e., \( V_t(x,\mu)=V_t(x) \).





Let \( \kappa>0 \) and \( \varepsilon\in(0,1) \).
Recall that the cut-off function \( \alpha \) defined in~\eqref{eq:alpha} has the free parameter \(a\).
We choose \( a \) so that \(e^{-\frac{ar}{M}} = \varepsilon^{\kappa}\), i.e.,
\begin{equation}
\label{eq:a}
a:= - \frac{\kappa M}{r}\log \varepsilon,
\end{equation}
where \( r \) is the constant in Lemma~\ref{lem:common_time}, eventually adjusted with Remark~\ref{rem:r}.

Throughout this section, we consider a measure \( \varrho_0\in \mathcal{P}_c(\mathbb{R}^d) \) and a regular perturbation \( \varrho\in \Pi_{\varepsilon_0,L}(\varrho_0) \) with given \( \varepsilon_0>0 \) and \( L>0 \).
We also denote by \( \Psi\colon [0,\varepsilon_0] \times \mathbb{R}^d \to \mathbb{R}^d  \) the corresponding map from Definition~\ref{def:reg_pert}.

We now consider three Cauchy problems in \( C(\mathbb{R}^d;\mathbb{R}^d) \):
\begin{gather}
\label{eq:initial}
\dot Z_t = V_t(Z_t),\quad Z_0 = \id,\\
\label{eq:target}
\dot Y_t = V_t(Y_t),\quad Y_0 = \Psi_{\varepsilon},\\
\label{eq:controlled}
\dot X_t = V_t(X_t) + \alpha(X_t)(Y_t - X_t),\quad X_0 = \id.
\end{gather}

Equation \eqref{eq:initial} is associated with the evolution of \emph{the perturbed measure} \( \varrho_{\varepsilon} \) under the vector field \( V_t \).
Indeed, the pushforward measure \( \mu^{\varepsilon}_t:=Z_{t\sharp}\varrho_{\varepsilon} \) satisfies the continuity equation
\[
\partial_t\mu^{\varepsilon}_t+ \nabla\cdot(V_t\mu^{\varepsilon}_t)=0,\quad \mu^{\varepsilon}_0=\varrho_{\varepsilon}.
\]

Equation \eqref{eq:target} is associated with the evolution of \emph{the reference measure} \( \varrho_0 \) under \( V_t \).
Indeed, by Definition~\ref{def:reg_pert}(d), we have that \( \mu_t:=Y_{t\sharp}\varrho_{\varepsilon} = Z_{t\sharp}(\Psi_{\varepsilon\sharp}\varrho_{\varepsilon})=Z_{t\sharp}\varrho_0 \), i.e., \( \mu_t \) satisfies
\[
\partial_t\mu_t+ \nabla\cdot(V_t\mu_t)=0,\quad \mu_0=\varrho_0.
\]

Finally, Equation \eqref{eq:controlled} can be regarded as Equation \eqref{eq:target} perturbed by the control function \( \alpha(X_t)(Y_t-X_t) \).
A priori, we cannot say that \eqref{eq:controlled} is related to any continuity equation, since \( \alpha(X_t(x))(Y_t(x)-X_t(x)) \) depends not only on \( X_t(x) \) but also on \( x \).
On the other hand, if we prove that \( X_t \) is invertible and the inverse is Lipschitz, then
\begin{equation}
  \label{eq:control}
u_t(x) := \alpha(x)\left(Y_t\circ X^{-1}_t(x)-x\right)
\end{equation}
would be an admissible control and \( \mu^{u}_t:= X_{t\sharp}\varrho_{\varepsilon} \) would satisfy
\[
\partial_t\mu^{u}_t+ \nabla\cdot\left((V_t+u_t)\mu^{u}_t\right)=0,\quad \mu^u_0=\varrho_{\varepsilon}.
\]

The following proof of Theorem~\ref{thm:main2} consists of two steps.
In the first step, we show that \( u \) given by~\eqref{eq:control} is indeed well-defined and admissible for all sufficiently small \( \varepsilon>0 \).
Since \( u_t \) is defined with the aim of  ``pushing'' towards the reference trajectory while passing through \( \omega \), we will be able to prove, in the second step, that \( \mu_t^{u} \) lies inside a tube of order \( \varepsilon^{1+\kappa} \) around the reference path  \( \mu_t \) for all \( t \) which are close enough to \( T \).
This situation is depicted in Figure~\ref{fig:paths}.


\begin{remark}
  Let us stress that the control \( u_t \) defined by~\eqref{eq:control} depends on \( \varepsilon \), since \( Y \) depends on this variable through the initial condition \( Y_0=\Psi_{\varepsilon} \).
  Below, we do not write explicitly this dependence in order to simplify the notation.
\end{remark}

From now on, we assume that \( (A_{1\text{-}3}) \) hold.
We begin by discussing the differentiability of the maps \( Z_t \), \( Y_t \), \( X_t \), \( t\in [0,T] \).
The first one, \( Z_t \), is continuously differentiable due to Assumptions \( (A_1) \), \( (A_3) \) and Proposition~\ref{prop:basic} Statement 4.
The second one, \( Y_t \), is differentiable at \( x \) when \( x \) is a differentiability point of \( \Psi_{\varepsilon} \).
This follows from the representation \( Y_t = Z_t\circ \Psi_{\varepsilon} \).
The corresponding derivative equals \( DZ_t(\Psi_{\varepsilon}(x))D\Psi_{\varepsilon}(x) \).
The differentiability of \( X_t \) is a more delicate issue, that we solve in the next lemma.
Before passing to the lemma, note that \( X_t \) is Lipschitz for all \( t\in [0,T] \), due to Proposition~\ref{prop:Lagrange}, and thus differentiable for almost all \( x\in \mathbb{R}^d \).

\begin{lemma}
  \label{lem:Xdiff}
  Let Assumptions \( (A_{1\text{-}3}) \) hold.
  Fix a number \( \varepsilon\in (0,\varepsilon_0) \) and a point \( x\in \dom(D \Psi_{\varepsilon}) \).
  Then, \( X_t \) is differentiable at \( x \) and \( DX_t(x)= R_t \), where \( R_t\in \mathcal{L}(\mathbb{R}^d;\mathbb{R}^d) \) is the unique solution of the matrix differential equation
  \[
    \begin{cases}
     \frac{d}{dt}R_t = \left(DV_t(X_t(x)) - \alpha(X_t(x))\id-\left(X_t(x)-Y_t(x)\right)D\alpha(X_t(x))\right)R_t + \alpha(X_t(x))DY_t(x),\\ R_0=\id.
    \end{cases}
  \]
\end{lemma}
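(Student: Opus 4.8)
The plan is to fix $x\in\dom(D\Psi_\varepsilon)$ and show directly that the difference quotient $t\mapsto (X_t(x+sh)-X_t(x))/s$ converges, as $s\to 0$, to the solution $R_t h$ of the proposed linear matrix ODE, uniformly for $t\in[0,T]$ and $h$ in the unit sphere. The first step is to record the integral form of~\eqref{eq:controlled}, namely
\[
X_t(x) = x + \int_0^t \Bigl( V_\tau(X_\tau(x)) + \alpha(X_\tau(x))\bigl(Y_\tau(x)-X_\tau(x)\bigr)\Bigr)\,d\tau,
\]
and to check that the candidate equation for $R_t$ has a unique solution on $[0,T]$: the coefficient matrices are bounded measurable in $t$ (using $(A_1)$, $(A_3)$ for $DV_t$, the $C^1$ cut-off $\alpha$ from Remark~\ref{rem:r}, the bound $|X_t(x)-Y_t(x)|\le$ const from Proposition~\ref{prop:basic}(5) and~\eqref{eq:Psi2}, and the fact that $DY_t(x)=DZ_t(\Psi_\varepsilon(x))D\Psi_\varepsilon(x)$ is bounded because $Z_t$ is a $C^1$-diffeomorphism with $\|DZ_t\|\le e^{Mt}$), so formula~\eqref{eq:odesol} and Lemma~\ref{lem:diffineq} apply.

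The second step is the core estimate. Set $q_s(t):=\bigl(X_t(x+sh)-X_t(x)\bigr)/s - R_t h$. Subtracting the integral equation for $R_t$ from the difference quotient of the integral equation for $X_t$, I would split the integrand into three telescoping pieces: the $V$-term $\frac1s\bigl(V_\tau(X_\tau(x+sh))-V_\tau(X_\tau(x))\bigr) - DV_\tau(X_\tau(x))R_\tau h$; the $\alpha(X_\tau)\cdot(\,\cdot\,)$ product term treated by the Leibniz-type expansion that produces the $-\alpha\,\mathrm{id}$ and $-(X_\tau-Y_\tau)D\alpha$ contributions; and the $\alpha(X_\tau(x))\cdot\frac1s(Y_\tau(x+sh)-Y_\tau(x))$ term against $\alpha(X_\tau(x))DY_\tau(x)h$. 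Each product/composition is handled by adding and subtracting the ``frozen'' linearization and using: $C^1$-ness of $V$ and $\alpha$ (so that, e.g., $\frac1s(V_\tau(X_\tau(x+sh))-V_\tau(X_\tau(x))) = \int_0^1 DV_\tau\bigl(\theta X_\tau(x+sh)+(1-\theta)X_\tau(x)\bigr)\,d\theta\cdot \frac{X_\tau(x+sh)-X_\tau(x)}{s}$, whose kernel converges to $DV_\tau(X_\tau(x))$ by continuity of $DV_\tau$ and Lipschitz continuity of $X_\tau$); the already-established differentiability of $Y_\tau$ at $x$ (since $x\in\dom(D\Psi_\varepsilon)$), which controls the $Y$-term; and Lipschitz bounds on $X_\tau$ from Proposition~\ref{prop:Lagrange}. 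This yields an inequality of the form
\[
|q_s(t)| \le \int_0^t \bigl( c_1 |q_s(\tau)| + \omega_s(\tau)\bigr)\,d\tau,
\]
where $c_1$ is a constant depending only on $M,L,T$ and $\|\alpha\|_{\lip}$, and $\omega_s(\tau)\to 0$ pointwise and is dominated (by the uniform Lipschitz bounds) by an integrable function independent of $s$. Grönwall's inequality then gives $|q_s(t)|\le e^{c_1 T}\int_0^T\omega_s(\tau)\,d\tau$, and dominated convergence forces the right-hand side to $0$ as $s\to 0$. Hence $X_t$ is differentiable at $x$ with $DX_t(x)=R_t$.

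The main obstacle is the control term in~\eqref{eq:controlled}: unlike $V_\tau$, the map $x\mapsto \alpha(X_\tau(x))(Y_\tau(x)-X_\tau(x))$ depends on $x$ \emph{not only through} $X_\tau(x)$, so one must carefully track all three $x$-dependencies (inside $\alpha$, inside $Y_\tau$, inside the $-X_\tau$) and verify that the delicate cancellation producing the coefficient $-(X_\tau(x)-Y_\tau(x))D\alpha(X_\tau(x))$ is legitimate — in particular that $DY_\tau(x)$ genuinely exists at the chosen $x$, which is exactly why the hypothesis $x\in\dom(D\Psi_\varepsilon)$ is imposed. A secondary technical point is ensuring the error terms $\omega_s$ are uniformly integrable so that dominated convergence applies; this follows from the global Lipschitz bound on $X_\tau$ of Proposition~\ref{prop:Lagrange} together with boundedness of $DV$, $\alpha$, $D\alpha$ and $DY_\tau$, but it must be stated explicitly.
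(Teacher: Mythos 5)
Your proposal is correct in its essential ideas, but it takes a genuinely different technical route from the paper's proof. The paper proves this lemma via a \emph{parametrized Banach contraction mapping} argument: it rewrites the right-hand side of~\eqref{eq:controlled} as \( W_t(X_t) + \alpha(X_t)\,Z_t\circ\Psi_\varepsilon(x+\lambda h) \) (with \( W_t(y) = V_t(y)-\alpha(y)y \)), defines a fixed-point operator \( \mathcal{F}(\lambda,\phi) \) on \( C([0,T];\mathbb{R}^d) \) equipped with the weighted norm \( \|\phi\|_\sigma = \max_t e^{-\sigma t}|\phi_t| \), shows \( \mathcal{F}(\lambda,\cdot) \) is a \( 1/2 \)-contraction by choosing \( \sigma \) large, and then exploits the contraction error estimate
\[
\|\phi^0 + \lambda Rh - \phi^\lambda\|_\sigma \le 2\,\|\phi^0 + \lambda Rh - \mathcal{F}(\lambda,\phi^0+\lambda Rh)\|_\sigma
\]
to reduce the claim to showing the right-hand side is \( o(\lambda) \), which follows by Taylor-expanding \( W_s \), \( \alpha \), and \( Z_s\circ\Psi_\varepsilon \) and invoking dominated convergence. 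You instead work directly with the difference quotient, derive an integral inequality of the form \( |q_s(t)|\le\int_0^t(c_1|q_s(\tau)|+\omega_s(\tau))\,d\tau \), and close with Grönwall and dominated convergence. Both routes hinge on exactly the same ingredients — \( C^1 \)-ness of \( V \), \( \alpha \), \( Z_t \) (hence pointwise convergence of the frozen kernels), differentiability of \( Y_t \) at \( x\in\dom(D\Psi_\varepsilon) \), the uniform Lipschitz bound on \( X_t \) from Proposition~\ref{prop:Lagrange}, and the a priori bound \( |X_t-Y_t|\le O(\varepsilon) \) — and both produce the same linearized coefficient after the Leibniz bookkeeping you describe. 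The contraction-mapping route packages the Grönwall iteration into the choice of \( \sigma \) and gives the \( o(\lambda) \) estimate in one stroke; your direct approach is more elementary and perhaps more transparent about where each coefficient in the linearized equation comes from, at the cost of carrying the auxiliary quantity \( q_s \) through the estimates explicitly. Your argument is complete as sketched, and you correctly flag the two genuine delicacies (the triple \( x \)-dependence in the control term, and uniform integrability of the \( \omega_s \)); the only thing I would add is to state explicitly that the uniform bounds are over \( (s,\tau)\in(0,1]\times[0,T] \) and over \( h \) in the unit sphere, so that the final convergence is uniform in \( t \) and \( h \), giving full (Fréchet) differentiability rather than merely directional differentiability.
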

\begin{proof}
  Throughout the proof, the point \( x \in \dom(D \Psi_{\varepsilon}) \) will be fixed.

Equation~\eqref{eq:controlled} can be rewritten as follows:
\[
\dot X_t(x) = W_t(X_t(x))+\alpha(X_t(x))Z_t(\Psi_{\varepsilon}(x)),\quad X_0(x)=x,
\]
Here \( W_t(y) := V_t(y) - \alpha(y)y \).
This vector field is clearly bounded, continuously differentiable in \( x \) and uniformly Lipschitz, i.e., there exists \( L_{W} >0\) such that
\[
\left|W_t(y_1)-W_t(y_2)\right|\le L_{W}|y_1-y_2|,
\]
for all \( t\in[0,T] \) and all \( y_1,y_2\in \mathbb{R}^d \).

The corresponding linearized equation reads as follows
\[
\dot R_t = DW_t(X_t(x))R_t+Z_t(\Psi_{\varepsilon}(x))D\alpha(X_t(x))R_t + \alpha(X_t(x))DZ_t(\Psi_{\varepsilon}(x))D \Psi_{\varepsilon}(x),\quad R_0=\id.
\]
This equation is well-defined because \( x\in \dom(D\Psi_{\varepsilon}) \).
We aim to show that \( X_t \) is differentiable at \( x \) for all \( t\in[0,T] \), with derivative \( DX_t(x) \) coinciding with \( R_t \).

Consider the map \( \mathcal{F}\colon [-1,1] \times C([0,T];\mathbb{R}^d)\to C([0,T];\mathbb{R}^d) \) defined by
\[
\mathcal{F}(\lambda,\phi)_{t} := x+\lambda h +\int_0^t \left(W_t(\phi_s+ \alpha(\phi_{s}Z_{s}\circ \Psi_{\varepsilon}(x+\lambda h)\right)\,ds,\quad x\in \mathbb{R}^d,
\]
where \( h \) is an arbitrary unit vector in \( \mathbb{R}^d \). The space \( C([0,T];\mathbb{R}^d) \) is equipped with the norm\footnote{Remark that this norm is equivalent to the usual supremum norm.}
\[
\|\phi\|_{\sigma} := \max_{t\in [0,T]}\left\{e^{-\sigma t}|\phi_{t}|\right\},
\]
where \( \sigma >0\) will be chosen later.

The continuity of \( \lambda\mapsto \mathcal{F}(\lambda,\phi) \) is obvious.
Now, we will show that \( \phi\mapsto \mathcal{F}(\lambda,\phi) \) is contractive for some \( \sigma \).
We deduce from~\eqref{eq:Psi1} that \( |\Psi_{\varepsilon}(x+\lambda h)|\le |\Psi_{\varepsilon}(x)|+L\lambda h \).
Hence, thanks to Proposition~\ref{prop:basic} Statement 5, there exists \( C>0 \) such that \( \left|Z_s\circ \Psi_{\varepsilon}(x+\lambda h)\right|\le C \) for all \( \lambda\in[-1,1] \) and \( h\in B_1(0) \).
Therefore,
\begin{align*}
  \left|\mathcal{F}(\lambda,\phi)_t - \mathcal{F}(\lambda,\psi)_t\right|
  &\le \int_0^s\left(\left|W_s(\phi_s)-W_s(\psi_s)\right| + \left|\alpha(\phi_s)-\alpha(\psi_s)\right|\cdot \left|Z_s\circ \Psi_{\varepsilon}(x+\lambda h)\right|\right)\,ds\\
  &\le \int_0^s\left(L_{W}+C\lip(\alpha)\right)\left|\phi_s-\psi_s\right|\,ds\\
  &\le \int_0^se^{\sigma t}\left(L_{W}+C\lip(\alpha)\right)\,ds\cdot\left\|\phi-\psi\right\|_{\sigma}\\
  &\le e^{\sigma t}\left(L_{W}+C\lip(\alpha)\right)\sigma^{-1}\left\|\phi-\psi\right\|_{\sigma},
\end{align*}
which implies
\[
  \left\|\mathcal{F}(\lambda,\phi) - \mathcal{F}(\lambda,\psi)\right\|_{\sigma}\le \eta \left\|\phi-\psi\right\|_{\sigma},
\]
where \( \eta = \left(L_{W}+C\lip(\alpha)\right)\sigma^{-1} \).
We choose \( \sigma := 2\left(L_{W}+C\lip(\alpha)\right)\), thus guaranteeing the contraction property with \( \eta=1/2 \).

We now use the parametrized version of Banach's contraction mapping theorem~\cite[Theorem A.2.1]{zbMATH05197323}.
According to this theorem, \( \mathcal{F}(\lambda,\cdot) \) admits a unique fixed point \( \phi^{\lambda} \):
\[
\phi^{\lambda}_{t} = x+\lambda h +\int_0^t \left(W_t(\phi^{\lambda}_s)+ \alpha(\phi^{\lambda}_{s})Z_{s}\circ \Psi_{\varepsilon}(x+\lambda h)\right)\,ds.
\]
Moreover, this fixed point satisfies the inequality
\begin{equation}
  \label{eq:sigmaineq}
\|\phi^0 + \lambda Rh - \phi^{\lambda} \|_{\sigma}\le 2\|\phi^0 + \lambda Rh - \mathcal{F}(\lambda,\phi^0+\lambda Rh)\|_{\sigma}.
\end{equation}

By construction, it holds \( \phi^{\lambda}_t = X_t(x+\lambda h) \).
Therefore, the left-hand side of~\eqref{eq:sigmaineq} is
\[
\|X(x+\lambda h) - X(x)-\lambda Rh\|_{\sigma},
\]
while
\begin{multline}
  \label{eq:fun_rhs}
   \mathcal{F}(\lambda,\phi^0+\lambda Rh)_t-\phi^0_t - \lambda R_th=\\
  x+\lambda h + \int_0^t\left(W_s\left(X_s(x)+\lambda R_sh\right)+\alpha\left(X_s(x)+\lambda R_sh\right)Z_s\circ \Psi_{\varepsilon}(x+\lambda h)\right)\,ds \\
  -
  \left(x+\int_0^t\left(W_s(X_s(x))+\alpha(X_s(x))Z_s\circ \Psi_{\varepsilon}(x)\right)\,ds\right)- \lambda R_th.
\end{multline}
Since \( \alpha \), \( W_s \), \( Z_s \) are differentiable and \( \Psi_{\varepsilon} \) is differentiable at \( x \), we can write
\begin{equation}
  \label{eq:os}
  \begin{cases}
  W_s\left(X_s(x)+\lambda R_sh\right) = W_s(X_s(x)) + \lambda DW_s(X_s(x)) R_sh+o_s(\lambda),\\
  \alpha\left(X_s(x)+\lambda R_sh\right) = \alpha(X_s(x)) + \lambda D\alpha(X_s(x)) R_sh+o_s(\lambda),\\
    Z_s\circ \Psi_{\varepsilon}(x+\lambda h) = Z_s\circ\Psi_{\varepsilon}(x) + \lambda DZ_s(\Psi_{\varepsilon}(x))D\Psi_{\varepsilon}(x)h + o_s(\lambda).
  \end{cases}
\end{equation}
Here we use \( o_s(\lambda) \) as a shorthand for a function which is measurable in \( s \), continuous in \( \lambda \), bounded in the sense that
\begin{equation}
  \label{eq:obound}
|o_s(\lambda)|\le C_1 \quad \forall s\in [0,T]\quad\forall \lambda\in [-1,1],
\end{equation}
for some \( C_1>0 \), and satisfies
\begin{equation}
  \label{eq:osmall}
\lim_{\lambda\to 0}\frac{o_s(\lambda)}{\lambda}=0\quad \forall s\in [0,T].
\end{equation}

By using~\eqref{eq:os}, we can restate the right-hand side of~\eqref{eq:fun_rhs} as follows:
\begin{align*}
  \lambda h+\lambda\int_0^tDW_s(X_s(x))R_s\,ds\cdot h +  \lambda \int_0^t\alpha(X_s(x))DZ_s(\Psi_{\varepsilon}(x))D\Psi_{\varepsilon}(x)\,ds \cdot h \\
  + \lambda \int_0^t Z_s(\Psi_{\varepsilon}(x)) D\alpha(X_s(x))R_s\,ds\cdot h + \int_0^to_s(\lambda)\,ds - \lambda R_th.
\end{align*}
Since
\[
R_t = \id + \int_0^t\left(DW_s(X_s(x))R_s+Z_s(\Psi_{\varepsilon}(x))D\alpha(X_s(x))R_s + \alpha(X_s(x))DZ_s(\Psi_{\varepsilon}(x))D \Psi_{\varepsilon}(x)\right)\,ds,
\]
the inequality~\eqref{eq:sigmaineq} becomes
\[
\|X(x+\lambda h) - X(x)-\lambda Rh\|_{\sigma}\le 2\max_{t\in[0,T]}\Big\{e^{-\sigma t}\Big|\int_0^to_s(\lambda)\,ds\Big|\Big\}.
\]
Now, from~\eqref{eq:obound}, \eqref{eq:osmall} and Lebesgue's dominated convergence theorem it follows that
\[
\lim_{\lambda\to0}\frac{1}{\lambda}\|X(x+\lambda h) - X(x)-\lambda Rh\|_{\sigma} = 0.
\]
For completing the proof, it remains to note that the formula holds for any arbitrary unit vector \( h \).
\end{proof}

\begin{lemma}
  \label{lem:Xinv}
  Let Assumptions \( (A_{1\text{-}3}) \) hold and \( \kappa \) in~\eqref{eq:a} satisfy \( \kappa\in\left(0,\frac{r}{TM}\right) \).
  Then, there exists \( \varepsilon_1\in (0,\varepsilon_0) \) such that, for each \( \varepsilon\in (0,\varepsilon_1) \), the solution \( X_t \) of~\eqref{eq:controlled} is a bi-Lipschitz homeomorphism for each \( t\in [0,T] \).
The constant \( \varepsilon_1 \) above depends on \( M \), \( T \), \( r \), \( \kappa \), \( L \), \( \varepsilon_0 \) only.
\end{lemma}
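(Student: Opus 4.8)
The plan is to deduce the statement from the bi-Lipschitz criterion of Theorem~\ref{thm:inv}, applied not to \( X_t \) directly (which is not compactly supported) but to \( f_t:=Z_t^{-1}\circ X_t \). Two preliminary facts come first. Since \( \alpha\ge0 \), the function \( D_t(x):=|X_t(x)-Y_t(x)| \), where \( Y_t=Z_t\circ\Psi_\varepsilon \) solves~\eqref{eq:target}, satisfies \( \tfrac{d}{dt}D_t^2\le 2M D_t^2 \) with \( D_0=|x-\Psi_\varepsilon(x)|\le\varepsilon L \), so \( |X_t(x)-Y_t(x)|\le \varepsilon Le^{MT} \) for all \( x,t \). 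Moreover \( \spt(\alpha)\subset\overline{B}_r(\omega_0) \) is compact, and outside the compact set \( \bigcup_{s\in[0,T]}Z_s^{-1}(\spt\alpha) \) the curve \( X_\cdot(x) \) never meets \( \spt(\alpha) \) (a connectedness argument in \( [0,T] \), using uniqueness for \( \dot Z=V_t(Z) \)), hence \( X_t=Z_t \) there; thus \( f_t=\id \) off a compact set and is a compactly supported Lipschitz map (\( X_t \) is Lipschitz, being the solution of an ODE with Lipschitz right-hand side, and \( Z_t^{-1} \) is \( e^{MT} \)-Lipschitz by Proposition~\ref{prop:basic}(3)). By \( (A_3) \) the map \( Z_t \) is a \( C^1 \)-diffeomorphism (Proposition~\ref{prop:basic}(4)), so Clarke's chain rule gives \( \partial_C f_t(x)\subset DZ_t^{-1}(X_t(x))\,\partial_C X_t(x) \) with \( DZ_t^{-1} \) everywhere invertible; it therefore suffices to prove that every matrix in \( \partial_C X_t(x) \) is invertible, for all \( x \) and all \( t\in[0,T] \).

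Next I would identify \( \partial_C X_t(x) \). On the full-measure set \( \dom(D\Psi_\varepsilon) \), Lemma~\ref{lem:Xdiff} gives \( DX_t(x)=R_t \); the matrix ODE for \( R_t \) depends continuously on the data \( \big(X_\cdot(x),Y_\cdot(x),D\Psi_\varepsilon(x)\big) \), the first two entries being continuous in \( x \), while any subsequential limit of \( D\Psi_\varepsilon(x_i) \) along \( x_i\to x \) is positive semi-definite (a limit of such matrices) with norm \( \le L \). Denoting by \( R_t[P] \) the solution when \( DY_t(x)=DZ_t(\Psi_\varepsilon(x))P \) and \( R_0=\id \), and noting that \( P\mapsto R_t[P] \) is affine, one gets
\[
\partial_C X_t(x)\ \subset\ \big\{\,R_t[P]\ :\ P\succeq 0,\ \|P\|\le L\,\big\},
\]
so it is enough to show that \( R_t[P] \) is invertible for every such \( P \).

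For the core step, set \( B_t:=DV_t(X_t)-(X_t-Y_t)D\alpha(X_t) \) and \( S_t:=DZ_t(\Psi_\varepsilon(x))P=DY_t(x) \), so that the ODE of Lemma~\ref{lem:Xdiff} becomes the damped linear equation \( \dot R_t=B_tR_t+\alpha(X_t)(S_t-R_t) \), \( R_0=\id \). By the first preliminary fact and \( \lip(\alpha)=a/r \), one has \( \|B_t\|\le M+Le^{MT}\varepsilon\,a/r\le M+1 \) for \( \varepsilon \) small, so the fundamental matrix \( \Phi^B_t \) of \( \dot\Phi=B_t\Phi \) is invertible with \( \|\Phi^B_t\|,\|(\Phi^B_t)^{-1}\|\le e^{(M+1)T} \). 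Substituting \( R_t=\Phi^B_tG_t \) turns the equation into a pure relaxation, \( \dot G_t=\alpha(X_t)\big((\Phi^B_t)^{-1}S_t-G_t\big) \), \( G_0=\id \), whose solution is the convex combination
\[
G_t=\lambda_0(t)\,\id+\int_0^t\lambda_s(t)\,(\Phi^B_s)^{-1}S_s\,ds,\qquad \lambda_0(t)=e^{-\int_0^t\alpha(X_\tau)d\tau},\quad \lambda_s(t)=e^{-\int_s^t\alpha(X_\tau)d\tau}\alpha(X_s),
\]
with nonnegative weights summing to \( 1 \). Since \( \alpha\le a=-\kappa Mr^{-1}\log\varepsilon \) by~\eqref{eq:a} and \( t\le T \), the weight of the identity satisfies \( \lambda_0(t)\ge e^{-aT}=\varepsilon^{\kappa MT/r} \), and the hypothesis \( \kappa<r/(TM) \) forces \( \varepsilon^{\kappa MT/r} \) to decay strictly more slowly than \( \varepsilon \) (hence than \( \varepsilon a \)). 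Now \( R_t[P]=\Phi^B_tG_t \) is invertible as soon as \( G_t \) is, and I would show \( G_t \) has positive-definite symmetric part: writing \( (\Phi^B_s)^{-1}S_s=(\Phi^B_s)^{-1}DZ_s(\Psi_\varepsilon(x))\,P \) and using that \( DZ_s(\Psi_\varepsilon(x)) \) is the fundamental matrix of \( \dot\Phi=DV_s(Y_s)\Phi \) while \( \|B_s-DV_s(Y_s)\|\le|DV_s(X_s)-DV_s(Y_s)|+Le^{MT}\varepsilon\,a/r \), a Gr\"onwall comparison gives \( \|(\Phi^B_s)^{-1}DZ_s(\Psi_\varepsilon(x))-\id\|\le\eta(\varepsilon)\to0 \), so \( (\Phi^B_s)^{-1}S_s=P+E_s \) with \( \|E_s\|\le L\eta(\varepsilon) \) and, since \( P\succeq0 \),
\[
\tfrac12\big(G_t+G_t^{\T}\big)\ \succeq\ \lambda_0(t)\id+(1-\lambda_0(t))\big(P-L\eta(\varepsilon)\id\big)\ \succeq\ \big(\varepsilon^{\kappa MT/r}-L\eta(\varepsilon)\big)\id\ \succ\ 0
\]
for all \( \varepsilon \) below a threshold \( \varepsilon_1 \) where \( \varepsilon^{\kappa MT/r}>L\eta(\varepsilon) \). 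A matrix with positive-definite symmetric part is invertible, so \( G_t \), hence \( R_t[P] \), is invertible for all \( t\in[0,T] \); combined with the first two steps and Theorem~\ref{thm:inv}, \( f_t=Z_t^{-1}\circ X_t \) is a bi-Lipschitz homeomorphism, and composing with the homeomorphism \( Z_t \), so is \( X_t \). The threshold \( \varepsilon_1 \) is fixed by \( Le^{MT}\varepsilon\,a/r\le1 \) and \( \varepsilon^{\kappa MT/r}>L\eta(\varepsilon) \), which involve only \( M,T,r,\kappa,L,\varepsilon_0 \) (and, through \( \eta \), the uniform continuity of \( DV \) on the compact set swept out by the trajectories).

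I expect the last step to be the main obstacle: propagating the positive semi-definiteness of \( D\Psi_\varepsilon \) and the nonnegativity of \( \alpha \) through the strongly damped linearized flow — in particular obtaining a genuine bound on the relaxation defect \( \eta(\varepsilon) \) and checking that it is beaten by the identity weight \( \lambda_0(t)\ge\varepsilon^{\kappa MT/r} \). This is exactly where both structural properties of a regular perturbation (\( \alpha\ge0 \) and \( D\Psi_\varepsilon\succeq0 \)) and the range restriction \( \kappa<r/(TM) \) are used in an essential way; the constant \( \varepsilon_1 \) is then read off from these smallness requirements.
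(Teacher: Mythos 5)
Your overall strategy mirrors the paper's: factor the Jacobian of \(X_t\) as (an invertible fundamental matrix) times (a relaxed convex combination), show the second factor has positive-definite symmetric part by exploiting \(D\Psi_\varepsilon\succeq 0\) and \(\alpha\ge 0\), and then feed the uniform invertibility of the Clarke Jacobian into Theorem~\ref{thm:inv}. Two of your moves are genuine improvements in presentation: working with \(f_t=Z_t^{-1}\circ X_t\) makes the compact-support hypothesis of Theorem~\ref{thm:inv} verifiable without the paper's footnote of truncating \(V\); and the inclusion \(\partial_C X_t(x)\subset\{R_t[P]:P\succeq0,\ \|P\|\le L\}\) using the affine dependence of \(R_t\) on \(P\) is cleaner than the subsequence argument in the paper's Step~5.

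There is, however, a concrete gap in your core step, and it is precisely where you flag the difficulty. Your relaxation defect
\(\eta(\varepsilon)\) controls \(\|(\Phi^B_s)^{-1}DZ_s(\Psi_\varepsilon(x))-\id\|\), where \(\Phi^B\) is generated by \(B_s=DV_s(X_s)-(X_s-Y_s)D\alpha(X_s)\) while \(DZ_s(\Psi_\varepsilon(x))\) is generated by \(DV_s(Y_s)\). The difference of the generators therefore contains \(\|DV_s(X_s)-DV_s(Y_s)\|\). Under \((A_3)\), \(DV\) is merely \emph{continuous}; on the compact set swept out by the trajectories you get a modulus of continuity \(\omega_{DV}\), so \(\eta(\varepsilon)\le C\bigl(\omega_{DV}(Le^{MT}\varepsilon)+O(-\varepsilon\log\varepsilon)\bigr)\), which tends to \(0\) but at an \emph{unspecified} rate. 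Consequently the inequality \(\varepsilon^{\kappa MT/r}>L\eta(\varepsilon)\) is not guaranteed to hold on any interval \((0,\varepsilon_1)\) (a polynomial \(\varepsilon^{\kappa MT/r}\) decays faster than, say, \(1/\log(1/\varepsilon)\)), and even when it does, the threshold \(\varepsilon_1\) would inherit a dependence on \(\omega_{DV}\), contradicting the lemma's claim that \(\varepsilon_1\) depends only on \(M,T,r,\kappa,L,\varepsilon_0\).

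The paper organizes the factorization to avoid exactly this comparison: in its Step~3 the two fundamental matrices \(U_t\) and \(Q_t\) are \emph{both} driven by \(DV_t(X_t)\) (one with, one without the extra \((X_t-Y_t)D\alpha\) term), so their difference is controlled purely by the explicit \(O(-\varepsilon\log\varepsilon)\) coming from \(|X_t-Y_t|\cdot\lip(\alpha)\), with no modulus-of-continuity term. (You may want to double-check, when revisiting, the paper's assertion \(DY_t=U_tD\Psi_\varepsilon\) in its Step~2–3, since \(DY_t\) actually satisfies \(\frac{d}{dt}DY_t=DV_t(Y_t)DY_t\); but the key idea you should import is to compare fundamental matrices whose generators differ only by the \(O(-\varepsilon\log\varepsilon)\) term \((X_t-Y_t)D\alpha(X_t)\), never by \(DV(X_t)-DV(Y_t)\).) As written, your proposal does not close this step and therefore does not establish the stated uniform threshold \(\varepsilon_1\).
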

\begin{proof}

 The proof contains several steps.

\vspace{5pt}
\noindent\emph{Step 1: Show that \( |X_t-Y_t|\le O(\varepsilon) \) and \( |X_t-Z_t|\le O(\varepsilon) \), for all \( t\in [0,T] \), with \( O(\varepsilon) \) determined by \( M \), \( T \), \( L \) only.}
\vspace{5pt}

We have
\[
\frac{d}{dt}(X_t-Y_t) = V_t(X_t)-V_t(Y_t) - \alpha(X_t)(X_t-Y_t).
\]
Therefore,
\[
\frac{d}{dt}|X_t-Y_t|^{2} \le 2M|X_t-Y_t|^{2} - 2\alpha(X_t)|X_t-Y_t|^{2},
\]
which means that
\[
|X_t-Y_t|^2\le |\id - \Psi_{\varepsilon}|^{2}e^{2Mt}\cdot e^{-2\int_0^t\alpha(X_s)ds}
\]
and, thanks to~\eqref{eq:Psi2}, we obtain
\begin{equation}
|X_t-Y_t|\le \varepsilon L e^{Mt}\cdot e^{-\int_0^t\alpha(X_s)ds}\le \varepsilon Le^{MT}.
\label{eq:X-Y}
\end{equation}


We now use the Lipschitz continuity of \( Z_t \) proved in Proposition~\ref{prop:basic} Statement 3.
We also write \( Y_t=Z_t\circ \Psi_{\varepsilon} \).
By~\eqref{eq:Psi2}, we obtain that \(|Y_t-Z_t| \le \varepsilon Le^{MT} \), for all \( t\in[0,T] \).
Therefore, \(|X_t-Z_t| \le 2\varepsilon Le^{MT}\), for all \( t\in[0,T] \).

\vspace{5pt}
\noindent\emph{Step 2: Prove that \( \|DY_t(x)\|\le Le^{Mt} \) and \( \|DZ_t(x)\|\le e^{Mt} \) for all \(x\in \dom(D \Psi_{\varepsilon})\).}
\vspace{5pt}

Recall that \( X_t \) and \( Y_t \) are differentiable for all \( x\in \dom(D\Psi_{\varepsilon})\).
We deduce from
\[
\frac{d}{dt} DY_t(x) = DV_t(X_t(x))DY_t(x),\quad DY_0(x) = D \Psi_{\varepsilon}(x)
\]
and~\eqref{eq:Psi1} that
\[
  \|DY_t(x)\|\le \|D \Psi_{\varepsilon}(x)\|\cdot e^{Mt}\le Le^{Mt}\quad \forall x\in\dom(D \Psi_{\varepsilon}).
\]
To deal with \( \|DZ_t(x)\| \), it suffices to start from \( \id \) instead of \( \Psi_{\varepsilon} \).

\vspace{5pt}
\noindent\emph{Step 3: Derive estimates for auxiliary matrix equations on the set \( \dom(D \Psi_{\varepsilon}) \).}
\vspace{5pt}

These are equations of the form:
\begin{gather*}
  \frac{d}{dt}U_t = DV_t(X_t)U_t,\quad U_0=\id,\\
\frac{d}{dt}Q_t = \left(DV_t(X_t) -(X_t-Y_t)D\alpha(X_t)\right)Q_t,\quad Q_0=\id,
\end{gather*}
considered for all \( x\in\dom(D \Psi_{\varepsilon}) \).
Remark that \( DY_t = U_tD \Psi_{\varepsilon}  \).

First, we estimate \( \|U_t-Q_t\| \).
It follows from \( |X_t-Y_t| \le O(\varepsilon)\), \( |D\alpha(X_t)| \le O(-\log\varepsilon)\) and
\begin{align*}
  \frac{d}{dt}(U_t-Q_t) = \left(DV_t(X_t)-(X_t-Y_t)D\alpha(X_t)\right)(U_t-Q_t) +(X_t-Y_t)D\alpha(X_t)U_t
\end{align*}
that
\begin{align*}
  \frac{1}{2}\frac{d}{dt}\|U_t-Q_t\|^2 \le \left(M +O(-\varepsilon\log\varepsilon)\right)\|U_t-Q_t\|^2 +O(-\varepsilon\log\varepsilon)  \|U_t-Q_t\|.
\end{align*}
Therefore, it holds
\begin{align*}
  \|U_t-Q_t\|^2
  &\le O(-\varepsilon\log\varepsilon)\int_0^t\|U_s-Q_s\|e^{2\int_s^t\left(M+O(-\varepsilon\log\varepsilon)\right)d\tau}\,ds\\
  &\le O(-\varepsilon\log\varepsilon)\sup_{s \in [0,t]} \|U_s-Q_s\| \int_0^te^{2\int_s^t\left(M+O(-\varepsilon\log\varepsilon)\right)d\tau}\,ds.
\end{align*}
By taking supremum over all \( t\in[0,T] \) on the both sides, we obtain
\begin{equation}
  \label{eq:U-Q}
\sup_{t \in [0,T]} \|U_t-Q_t\|\le O(-\varepsilon\log\varepsilon).
\end{equation}
Finally, from \( \left\|DV_t(X_t)+(X_t-Y_t)D\alpha(X_t)\right\|\le M+O(-\varepsilon\log\varepsilon) \) it follows that
\begin{equation}
  \label{eq:invQ}
\sup_{t \in[0,T] } \|Q_t^{-1}\| \le O(1)\quad\text{for all}\quad \varepsilon\in[0,\varepsilon_0].
\end{equation}

By recalling~\eqref{eq:a} and Step 1, we have that \( O(-\varepsilon\log\varepsilon) \) in the right-hand side of~\eqref{eq:U-Q} and \( O(1) \) in the right-hand side of~\eqref{eq:invQ} are determined by \( M \), \( T \), \( \kappa \), \( r \), \( L \) only.

\vspace{5pt}
\noindent\emph{Step 4: Show that there exists \( \varepsilon_1\in (0,\varepsilon_0) \) such that, for any \( \varepsilon\in (0,\varepsilon_1) \), the matrix \( DX_t(x) \) is invertible for any \( x\in \dom(D\Psi_{\varepsilon})\).}
\vspace{5pt}

All functions in this step are restricted to \( \dom(D\Psi_{\varepsilon})\).
It follows from Lemma~\ref{lem:Xdiff} that
\[
\frac{d}{dt}DX_t = \left(DV_t(X_t) - \alpha(X_t)\id- (X_t-Y_t)D\alpha(X_t)\right)DX_t +\alpha(X_t)DY_t.
\]
Therefore,
\begin{align}
  DX_t &= e^{-\int_0^t\alpha(X_s)ds}Q_t\left(\id+\int_0^t\alpha(X_s)e^{\int_0^s\alpha(X_\tau)d\tau}Q_s^{-1}DY_s\,ds\right)\notag\\
 &= Q_t\left(e^{-\int_0^t\alpha(X_s)ds}\id+\int_0^t\alpha(X_s)e^{-\int_s^t\alpha(X_\tau)d\tau}Q_s^{-1}U_s\,ds\cdot D \Psi_{\varepsilon}\right).\label{eq:DX}
\end{align}

Define
\[
S_t = \int_0^t\alpha(X_s)e^{-\int_s^t\alpha(X_\tau)d\tau}Q_s^{-1}U_s\,ds - \int_0^t\alpha(X_s)e^{-\int_s^t\alpha(X_\tau)d\tau}Q_s^{-1}Q_s\,ds,
\]
that satisfies
\begin{align}
  \|S_t\|&\le  \int_0^t\alpha(X_s)e^{-\int_s^t\alpha(X_\tau)d\tau}\left\|Q_s^{-1}U_s-Q_s^{-1}Q_s\right\|\,ds\notag\\
  &\le
  \int_0^t\alpha(X_s)e^{-\int_s^t\alpha(X_\tau)d\tau}\left\|Q_s^{-1}\right\|\cdot\left\|U_s-Q_s\right\|\,ds= O(\varepsilon\log^{2}\varepsilon).
         \label{eq:S_t_est}
\end{align}
Here we used~\eqref{eq:U-Q}, \eqref{eq:invQ} and the fact that
\begin{equation}
  \label{eq:intalpha}
\int_0^t\alpha(X_s)e^{-\int_s^t\alpha(X_\tau)d\tau}\le O(-\log\varepsilon).
\end{equation}

Now, \eqref{eq:DX} yields \( DX_t=Q_tP_t \) with
\begin{align}
  \label{eq:DX_t}
  P_t =e^{-\int_0^t\alpha(X_s)ds}\id+D\Psi_{\varepsilon}\int_0^t\alpha(X_s)e^{-\int_s^t\alpha(X_\tau)d\tau}\,ds+S_t\cdot D \Psi_{\varepsilon}.
\end{align}
The matrix \( Q_t(x) \) is obviously invertible for all \( x\in \dom (D\Psi_{\varepsilon})\).
We claim that the matrix \( P_t(x) \) is positive definite for all \( x\in \dom(D\Psi_{\varepsilon}) \), when \( \varepsilon \) is small.
Since, by Definition~\ref{def:reg_pert}, Statement (c), \( D\Psi_{\varepsilon} \) is positive semi-definite for all \( \varepsilon \in (0,\varepsilon_0) \), we conclude that
\[
  \langle P_tv,v\rangle \ge e^{-\int_0^t\alpha(X_s)ds}\|v\|^2 + \left\langle S_t\cdot (D \Psi_{\varepsilon})v,v\right\rangle
  \ge \left(e^{-\int_0^t\alpha(X_s)ds} - \|S_t\|\cdot \|D\Psi_{\varepsilon}\|\right)\|v\|^2,
\]
for all \( v\in \mathbb{R}^d \) and \( \varepsilon\in (0,\varepsilon_0) \).
It remains to note that
\[
e^{-\int_0^t\alpha(X_s)\,ds}\ge e^{-\int_0^T\alpha(X_s)\,ds} = e^{-Ta} = \varepsilon^{\frac{\kappa MT}{r}},
\]
then use~\eqref{eq:S_t_est} and \( \|D\Psi_{\varepsilon}\| \le L \), to get
\[
  \langle P_tv,v\rangle
  \ge \varepsilon^{\frac{\kappa TM}{r}}\left(1 - O(\varepsilon^{1-\frac{\kappa TM}{r}}\log^2\varepsilon)\right)\|v\|^2.
\]
Therefore, if \( \kappa <\frac{r}{TM} \), then there exist \( \varepsilon_{1},c>0 \) such that
  \begin{equation}
    \label{eq:Ppos}
  \langle P_t(x)v,v\rangle
  \ge \varepsilon^{\frac{\kappa TM}{r}}c\|v\|^2\quad \forall \varepsilon\in (0,\varepsilon_{1})\quad \forall x\in \dom(D\Psi_{\varepsilon})\quad \forall v\in \mathbb{R}^{d}.
  \end{equation}
By analyzing the above estimates, we conclude that \( \varepsilon_1 \) and \( c \) are determined by \( M \), \( T \), \( r \), \(\kappa\), \( L \), \( \varepsilon_0 \) only.
Now, for each \( \varepsilon\in (0,\varepsilon_{1}) \), \( DX_t \) is invertible on \( \dom(D\Psi_{\varepsilon})\cap\overline{B}_{\delta}(0) \) as the product of invertible matrices \( Q_t \) and \( P_t \).

\vspace{5pt}
\noindent\emph{Step 5: Show that \( X_t \) is bi-Lipschitz for all \( \varepsilon\in (0,\varepsilon_1) \) and \( t\in [0,T] \).}
\vspace{5pt}

Fix any \( \varepsilon\in (0,\varepsilon_1)  \). We have already proved the result for $x\in \dom(D\Psi_{\varepsilon})$. We now prove the result for $x\in  \mathbb{R}^d \setminus \dom(D\Psi_{\varepsilon})$, that has zero Lebesgue measure due to the fact that \( \Psi_{\varepsilon} \) is Lipschitz. We prove bi-Lipschitz continuity by studying the Clarke's generalized Jacobian \( \partial_CX_t(x) \) (see Section~\ref{sec:inv}).

Let $x\in  \mathbb{R}^d \setminus \dom(D\Psi_{\varepsilon})$ and a sequence \( x_k\to x \) such that \( x_k\in \dom(D\Psi_{\varepsilon}) \) be given. For each \( x_{k} \), we have \( DX_t(x_k)=Q_t(x_k)P_t(x_k) \)
and for \( P_t(x_k) \) the inequality~\eqref{eq:Ppos} holds.
Since the matrix \( Q_t(y) \) is invertible for each \( y\in \mathbb{R}^d \) and continuous as a function of \( y \), we conclude that \( DX_t(x_k) \) converges if and only if \( P_t(x_k) \) converges, and in this case
\[
\lim_{k\to\infty}DX_t(x_k) = Q_t(x)\lim_{k\to\infty}P_t(x_k).
\]

 Recall now that matrices $DX_t(y)$ are uniformly bounded for $y\in\R^d$, since their norm is bounded by the Lipschitz constant of $X_t$ given in Proposition \ref{prop:Lagrange}. We then consider a subsequence $x_{k_l}$ such that both $DX_t(x_{k_l})$ and $P_t(x_{k_l})$ are converging. We denote with $ P^{\{x_{k_l}\}}_t(x)$ the limit of the matrix $P_t$, highlighting that it depends on the chosen subsequence. Since \( c>0 \) from Step 4 does not depend on \( x \), the limiting matrix satisfies the inequality
\begin{equation}
    \label{e-Pinv}
\langle P^{\{x_{k_l}\}}_t(x)v,v\rangle\ge \varepsilon^{\frac{\kappa TM}{r}}c|v|^2 \mbox{~~ for all~~}  v\in \mathbb{R}^{d} .
\end{equation}
Since this estimate does not depend on the chosen subsequence, it holds for all limits of the bounded sequence $P_t(x_k)$. Given another sequence $y_k\to x$ with $y_k\in \dom(D\Psi_{\varepsilon})$, the same estimate \eqref{e-Pinv} holds too. Finally, the estimate holds for any convex combination \( (1-\lambda)P^{\{x_k\}}_t(x)+\lambda P^{\{y_k\}}_t(x) \) of such matrices, i.e. for all matrices in the Clarke's generalized Jacobian \( \partial_CX_t(x) \) (see Section~\ref{sec:inv}). Since \eqref{e-Pinv} ensures that matrices are invertible, we have proved that \( \partial_CX_t(x) \) consists exclusively of invertible matrices. Now, we use Theorem~\ref{thm:inv}, to complete the proof\footnote{In fact, Theorem~\ref{thm:inv} requires \( X_t \) to be compactly supported, i.e. \( X_t=\id \) outside a compact set \( K \). Since all our measures are compactly supported and \( V_t \) is bounded, we can assume, without loss of generality, that \( V_t=0 \) outside some compact set \( K \). Then Theorem~\ref{thm:inv} can be formally applied.}.
\end{proof}

\subsection{Admissible control: nonlocal case}
\label{subsec:nonlocal}

We now adapt the results of Section~\ref{subsec:local_stable} to the nonlocal case. It is worth noting that neither formula~\eqref{eq:control} nor  the statements and the proofs of Lemmas~\ref{lem:Xdiff} and \ref{lem:Xinv} are affected by the appearance of the nonlocal term (up to the trivial replacement of \( DV(X_t) \) with \( DV(X_t,X_{t\sharp}\varrho_{\varepsilon})\)), except for the first step in the proof of Lemma~\ref{lem:Xinv}.
Below, we provide a detailed proof for this step in the nonlocal case.


\begin{proofof}{Step 1 in Lemma~\ref{lem:Xinv} (nonlocal case)}
As before, we consider the Cauchy problems
\begin{gather*}
  \dot Z_t = V_t(Z_t,Z_{t\sharp}\varrho_{\varepsilon}),\quad Z_0 = \id,\\
  \dot Y_t = V_t(Y_t,Y_{t\sharp}\varrho_{\varepsilon}),\quad Y_0 = \Psi_{\varepsilon},\\
  \dot X_t = V_t(X_t,X_{t\sharp}\varrho_{\varepsilon}) + \alpha(X_t)(Y_t - X_t),\quad X_0 = \id.
\end{gather*}

  Our goal is to show that \( |X_t-Y_t|\le O(\varepsilon) \) and \( |X_t-Z_t|\le O(\varepsilon) \), for all \( t\in [0,T] \), with \( O(\varepsilon) \) determined by \( M \), \( T \), \( L \), \( \varepsilon_0 \) only.

We have
\[
\frac{d}{dt}(X_t-Y_t) = V_t(X_t,X_{t\sharp}\varrho_{\varepsilon})-V_t(Y_t,Y_{t\sharp}\varrho_{\varepsilon}) - \alpha(X_t)(X_t-Y_t).
\]
Therefore,
\begin{align}
  \frac{d}{dt}|X_t-Y_t|^2
  &\le 2\left\langle V_t(X_t,X_{t\sharp}\varrho_{\varepsilon})-V_t(Y_t,Y_{t\sharp}\varrho_{\varepsilon}),X_t-Y_t\right\rangle - 2\alpha(X_t)|X_t-Y_t|^2\notag\\
  &\le 2M|X_t-Y_t|^{2} + 2M\mathcal{W}_2(X_{t\sharp}\varrho_{\varepsilon},Y_{t\sharp}\varrho_{\varepsilon})|X_t-Y_t|-2\alpha(X_t)|X_t-Y_t|^2\notag\\
  &\le 2M|X_t-Y_t|^{2} + 2M|X_t-Y_t|\Big(\!\int\! |X_t-Y_t|^2\,d\varrho_{\varepsilon}\!\Big)^{1/2}\!\!\!\!-2\alpha(X_t)|X_t-Y_t|^2.
    \label{eq:X-Ynl}
\end{align}
We integrate both sides with respect to \( \varrho_{\varepsilon} \) and use the Cauchy–Schwarz inequality
\begin{equation}
\label{eq:L1-L2}
\int|X_t-Y_t|\cdot 1\,d\varrho_{\varepsilon}\le \left(\int|X_t-Y_t|^2\,d\varrho_{\varepsilon}\right)^{1/2}\cdot \left(\int 1^2\,d\varrho_{\varepsilon}\right)^{1/2}.
\end{equation}
This gives
\[
\frac{d}{dt}\int|X_t-Y_t|^2\,d\varrho_{\varepsilon}\le(4M-2\alpha(X_t))\int|X_t-Y_t|^2\,d\varrho_{\varepsilon},
\]
and therefore
\begin{equation}
  \label{eq:X-Ynl2}
\int|X_t-Y_t|^2\,d\varrho_{\varepsilon}\le  e^{4Mt} e^{-2\int_0^t\alpha(X_s)ds} \int|\id-\Psi_{\varepsilon}|^2\,d\varrho_{\varepsilon}.
\end{equation}
We conclude from~\eqref{eq:X-Ynl2} and~\eqref{eq:Psi2} that \( \int|X_t-Y_t|^2\,d\varrho_{\varepsilon} \le O(\varepsilon^2) \).
Because of~\eqref{eq:L1-L2}, we have \( \int|X_t-Y_t|\,d\varrho_{\varepsilon} \le O(\varepsilon) \).

Now, we deduce from~\eqref{eq:X-Ynl} that
\[
\frac{d}{dt}|X_t-Y_t|^2
  \le 2M|X_t-Y_t|^{2} + |X_t-Y_t|O(\varepsilon),
\]
which gives
\[
  |X_t-Y_t|^2\le e^{2Mt}\left(|\id -\Psi_{\varepsilon}|^{2} + O(\varepsilon)\int_0^t|X_s-Y_s|e^{-2Ms}\,ds\right).
\]

Define \( \Delta := \sup_{t \in [0,T]}|X_t-Y_t| \).
We deduce from the previous inequality that
\[
  \Delta^2\le e^{2MT}\varepsilon^2 + O(\varepsilon) \Delta.
\]
This means that \( \Delta \) cannot exceed the greatest root of the quadratic equation \(y^2 + by +c = 0\),
where \( b = - O(\varepsilon)  \), \( c = -e^{2MT}\varepsilon^2 \).
That is, \( \Delta\le O(\varepsilon) \).
As in the linear case, \( O(\varepsilon) \) here is determined by \( M \), \( T \), \( L \), \( \varepsilon_0 \) only.

Finally, note that the inequality \( |X_t-Z_t|\le O(\varepsilon) \) can be proved as in the linear case.
\end{proofof}

\subsection{Stabilization property}
\label{subsec:stabil}
Here we prove that the control \( u \) defined by~\eqref{eq:control} is indeed stabilizing, that is, there exist \( C,\varepsilon_{*}>0 \) such that \( W_2(\varrho_0,\varrho_{\varepsilon})< \varepsilon \) implies
\begin{equation}
  \label{eq:epspower}
  \mathcal{W}_2\left(\mu_T^{u},\mu_T\right)=
  \mathcal{W}_2\left(X_{T\sharp}\varrho_{\varepsilon},Y_{T\sharp}\varrho_{\varepsilon}\right)< C\varepsilon^{1+\kappa},
\end{equation}
for all \( \kappa\in\left(0,\frac{r}{TM}\right) \) and \( \varepsilon\in (0,\varepsilon_{*}) \).
Note that, due to~\eqref{eq:Psi2}, the inequality  \( W_2(\varrho_0,\varrho_{\varepsilon})< \varepsilon \) holds a priori for all \( \varepsilon<1/L \).
Here and below we use the notation of Sections~\ref{subsec:local_stable}, \ref{subsec:nonlocal}.

By the inequality~\eqref{eq:Psi2} and the identity \( \varrho_0=\Psi_{\varepsilon\sharp}\varrho_{\varepsilon} \), it holds
\[
\spt (\varrho_{\varepsilon}) \subset \overline{B}_{L\varepsilon}(\spt(\varrho_{0}))\quad \forall \varepsilon\in [0,\varepsilon_0].
\]
Using this fact and the Lipschitz continuity of \( Z_t \), i.e.,
\[
|Z_t(x)-Z_t(x_0)|\le e^{MT}|x-x_0|\quad \forall t\in [0,T]\quad \forall x,x_0\in \mathbb{R}^d,
\]
we deduce that for all \( \varepsilon\in [0,\varepsilon_0] \) such that \( e^{MT} L\varepsilon<r/2 \) the following holds:
\begin{equation}
  \label{eq:middle}
\forall x\in \spt(\varrho_{\varepsilon})\; \exists\, x_0\in \spt (\varrho_0)\;\text{ such that }\;Z_t(x)\in \overline{B}_{r/2}\left(Z_t(x_0)\right)\quad \forall t\in [0,T].
\end{equation}

On the other hand, we have seen in Step 1 of the proof of Lemma~\ref{lem:Xinv} that \( |X_t- Z_t|\le O(\varepsilon)\) with \( O(\varepsilon) \) determined by \( M \), \( T \), \( L \), \( \varepsilon_0 \) only.
Thus, there exists \( \varepsilon_{*}>0 \) such that for all \( \varepsilon\in (0,\varepsilon_{*}) \), the following four properties hold:
\[
\varepsilon<1/L,\quad e^{MT}L\varepsilon<r/2,\quad \varepsilon\in (0,\varepsilon_1),\quad O(\varepsilon)\le r/2,
\]
where \( \varepsilon_1 \) is the constant from Lemma~\ref{lem:Xinv}.
By construction, such \( \varepsilon_{*} \) depends on \( M \), \( T \), \( r \), \( \kappa \), \( L \), \( \varepsilon_0 \) only.

In the rest of the proof, we assume that \( \varepsilon\in (0,\varepsilon_{*}) \).
In this case, thanks to Lemma~\ref{lem:Xinv}, the map \( X_t \) is a bi-Lipschitz homeomorphism.
Thus, the control \( u_t \) given by~\eqref{eq:control} is well-defined, Lipschitz and satisfies \( \spt(u_t) \subset \omega \) due to the choice of \( \alpha \).
Moreover, by construction, \( \mu^{u}_t=X_{t\sharp}\varrho_{\varepsilon} \) satisfies~\eqref{eq:conteq}.
By the choice of \( \varepsilon_{*} \), we have \( |X_t-Z_t| \le r/2\).
Hence
\[
\forall x\in \spt(\varrho_{\varepsilon})\quad \mbox{~~it holds~~} X_t(x)\in \overline{B}_{r/2}\left(Z_t(x)\right)\quad \forall t\in [0,T].
\]
By combining this statement with~\eqref{eq:middle}, we conclude that
\begin{equation}
  \label{eq:statement}
\forall x\in \spt(\varrho_{\varepsilon})\; \exists\, x_0\in \spt (\varrho_0)\;\text{ such that }\;X_t(x)\in \overline{B}_{r}\left(Z_t(x_0)\right)\quad \forall t\in [0,T].
\end{equation}

Now, it follows from Lemma~\ref{lem:common_time} that
for any \( x\in \spt(\varrho_{\varepsilon}) \) there exists an interval \( I(x)\subset (0,T) \) of length \( r/M \) such that \( X_t(x)\in \omega_0 \) for all \( t\in I(x) \).
By the definition of \( \alpha \), it holds
\[
\alpha(X_t(x))=a= -\kappa M \log \varepsilon /r\quad \forall x\in\spt(\varrho_{\varepsilon})\quad \forall t\in I(x),
\]
and therefore
\[
  e^{-\int_0^T\alpha(X_t(x))}\ge e^{-a\frac{r}{M}} = e^{\kappa\log\varepsilon} = \varepsilon^\kappa\quad \forall x\in\spt(\varrho_{\varepsilon}).
\]
By using \eqref{eq:X-Ynl2}, we have that \( \left(\int|X_T-Y_T|^2\,d\varrho_{\varepsilon}\right)^{1/2} \le C\varepsilon^{1+\kappa}\), for some \( C>0 \).
The estimate~\eqref{eq:epspower} follows from the second inequality in~\eqref{eq:W2ineq}.



We stress that \( C \) in~\eqref{eq:epspower} as well as \( \varepsilon_{*} \) and \( \kappa_{*}:=\frac{r}{TM} \) are determined by \( M \), \( T \), \( r \), \( L\), \( \varepsilon_0 \) only and~\eqref{eq:epspower} holds for each \( \varepsilon\in(0,\varepsilon_{*}) \).
Due to Lemma~\ref{lem:common_time}, the constant \( r \) depends on \( \omega \), \( \spt(\varrho_0) \) and \( M \) only.
Hence \( C \), \( \varepsilon_{*} \) and \( \kappa_{*} \) are determined by \( \omega \), \( \spt(\varrho_0) \), \( M \), \( T \), \( L \), \( \varepsilon_0 \) only.


\subsection{Eliminating Assumption \( (A_{3}) \)}
\label{subsec:A3}


In this section, we show that Assumption \( (A_{3}) \) can be removed.
Suppose that \( (A_{1,2}) \) hold and let \( V^{\sigma}(\cdot,\mu):= \eta^{\sigma}* V(\cdot,\mu) \) for all \( \mu\in \mathcal{P}_c(\mathbb{R}^d) \), where \( \sigma\in[0,1] \) and \(\eta^{\sigma}\) is the standard mollifier \eqref{eq:mollifier}.
For each \( \sigma\in [0,1] \), the vector field \( V^{\sigma} \) satisfies \( (A_{1}) \), with the same constant \( M \), and for each \( \sigma\in(0,1] \) it satisfies \( (A_3) \).

\begin{lemma}
Assumption \( (A_2) \) holds for \( V^{\sigma} \) if \( \sigma \) is sufficiently small.
\end{lemma}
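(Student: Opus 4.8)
\emph{Proof proposal.} The plan is to compare the uncontrolled nonlocal flow $Z^{\sigma}$ of $V^{\sigma}$ issuing from $\varrho_0$ with the nonlocal flow $Z$ of $V$, show that they are uniformly close on $\spt(\varrho_0)$ when $\sigma$ is small, and then deduce $(A_2)$ for $V^{\sigma}$ from $(A_2)$ for $V$ as repackaged by Lemma~\ref{lem:common_time}. Note that replacing $V$ by $V^{\sigma}$ changes \emph{both} the flow and the reference trajectory, so the comparison must be done for the coupled system~\eqref{eq:flowZ}.

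First I would record the elementary bound
\[
\left\|V^{\sigma}_t(\cdot,\mu) - V_t(\cdot,\mu)\right\|_{\infty}\le M\sigma\quad\text{for all }t\in[0,T],\ \mu\in\mathcal{P}_c(\mathbb{R}^d),
\]
which holds because $\eta^{\sigma}$ is supported in $\overline{B}_{\sigma}(0)$ and has unit mass, while $V_t(\cdot,\mu)$ is $M$-Lipschitz by $(A_1)$: indeed $|V^{\sigma}_t(x,\mu)-V_t(x,\mu)|=\big|\int\eta^{\sigma}(y)\big(V_t(x-y,\mu)-V_t(x,\mu)\big)\,dy\big|\le M\int\eta^{\sigma}(y)|y|\,dy\le M\sigma$.

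Next I would set $\mu_t:=Z_{t\sharp}\varrho_0$, $\mu^{\sigma}_t:=Z^{\sigma}_{t\sharp}\varrho_0$, and $g(t):=\sup_{x\in\spt(\varrho_0)}|Z^{\sigma}_t(x)-Z_t(x)|$, which is finite by Proposition~\ref{prop:basic}. For $x\in\spt(\varrho_0)$, subtracting the integral forms of~\eqref{eq:flowZ} for $V^{\sigma}$ and $V$, inserting the intermediate term $V_s(Z^{\sigma}_s(x),\mu^{\sigma}_s)$, and using $(A_1)$ together with the bound above gives
\[
\left|Z^{\sigma}_t(x)-Z_t(x)\right|\le\int_0^t\Big(M\sigma+M\left|Z^{\sigma}_s(x)-Z_s(x)\right|+M\,\mathcal{W}_2(\mu^{\sigma}_s,\mu_s)\Big)\,ds.
\]
By the second inequality in~\eqref{eq:W2ineq}, $\mathcal{W}_2(\mu^{\sigma}_s,\mu_s)\le\big(\int|Z^{\sigma}_s-Z_s|^2\,d\varrho_0\big)^{1/2}\le g(s)$, so $|Z^{\sigma}_t(x)-Z_t(x)|\le M\sigma T+2M\int_0^t g(s)\,ds$. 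Taking the supremum over $x\in\spt(\varrho_0)$ and applying Gr\"onwall's lemma yields $g(t)\le M\sigma T\,e^{2MT}$ for all $t\in[0,T]$.

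Finally I would invoke Lemma~\ref{lem:common_time}, applied to the flow $Z$ of $V$ (which satisfies $(A_{1,2})$), to get $r>0$ and a nonempty compact $\omega_0$ with $\omega_0\subset\overline{B}_r(\omega_0)\subset\omega$ such that for every $x\in\spt(\varrho_0)$ one has $\overline{B}_r(Z_t(x))\subset\omega_0$ for all $t$ in a subinterval $I_x\subseteq[0,T]$ of length $r/M>0$; in particular $I_x\cap(0,T)\ne\varnothing$. Now choose $\sigma$ small enough that $M\sigma T e^{2MT}<r$ (and $\sigma\le1$). Then for every $x\in\spt(\varrho_0)$ and every $t\in I_x$ we have $|Z^{\sigma}_t(x)-Z_t(x)|<r$, hence $Z^{\sigma}_t(x)\in\overline{B}_r(Z_t(x))\subset\omega_0\subset\omega$. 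Since $Z^{\sigma}$ is exactly the flow of $(t,x)\mapsto V^{\sigma}_t(x,\mu^{\sigma}_t)$, choosing any $t$ in $I_x\cap(0,T)$ shows that $(A_2)$ holds for $V^{\sigma}$. There is no genuine obstacle here; the only point requiring care is the Gr\"onwall step, where the error feeds back through the nonlocal argument $\mu^{\sigma}_s$ and must be controlled — which is precisely what the pushforward estimate~\eqref{eq:W2ineq} does by bounding $\mathcal{W}_2(\mu^{\sigma}_s,\mu_s)$ by the same quantity $g(s)$ one is estimating.
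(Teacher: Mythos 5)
Your proof is correct, and it takes a genuinely different route from the paper's. The paper's argument is qualitative and outsources the hard work: it cites \cite[Theorem 1]{pogodaevNonlocalBalanceEquations2022} for continuity of $\sigma\mapsto\mu^\sigma_t$ and \cite[Lemma 3]{pogodaevNonlocalBalanceEquations2022} for pointwise convergence $\Psi^\sigma_t\to\Psi_t$, then upgrades to uniform convergence on $\spt(\varrho_0)$ via the common Lipschitz bound $e^{MT}$ in $x$ and the $M$-Lipschitz bound in $t$, and finally applies Lemma~\ref{lem:common_time}. You instead prove the needed closeness from scratch: the pointwise bound $\|V^{\sigma}_t(\cdot,\mu)-V_t(\cdot,\mu)\|_{\infty}\le M\sigma$ (immediate from $(A_1)$ and $\spt\eta^\sigma\subset\overline{B}_\sigma(0)$), a comparison of the two coupled Lagrangian systems~\eqref{eq:flowZ} with the intermediate term $V_s(Z^\sigma_s,\mu^\sigma_s)$ inserted, the key use of the second inequality in~\eqref{eq:W2ineq} to close the loop created by the nonlocal term (bounding $\mathcal{W}_2(\mu^\sigma_s,\mu_s)$ by the same supremum $g(s)$ you are estimating), and Gr\"onwall to get the explicit rate $g(t)\le M\sigma T e^{2MT}$. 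You correctly identify and handle the one real subtlety, namely that both the flow and the reference measure change when $V$ is replaced by $V^\sigma$. The endgame via Lemma~\ref{lem:common_time} is the same as the paper's. What your version buys is a self-contained, quantitative proof with an explicit linear-in-$\sigma$ error bound and no reliance on the external stability results; what the paper's version buys is brevity and reuse of machinery already cited elsewhere in Section~\ref{subsec:A3}. Both are valid.
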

\begin{proof}
Denote by \( \mu^{\sigma}_t \) the trajectory of~\eqref{eq:conteq-uncontrolled} corresponding to the nonlocal vector field \( V^{\sigma} \) and the initial condition \( \mu^{\sigma}_{0}=\varrho_0 \).
Thanks to~\cite[Theorem 1]{pogodaevNonlocalBalanceEquations2022}, the map \( \sigma\mapsto \mu^{\sigma}_{t} \) is continuous for each \( t\in [0,T] \).
Now, it follows from~\cite[Lemma 3]{pogodaevNonlocalBalanceEquations2022} that \( \Psi_t^{\sigma}\to \Psi_t \) pointwise as \( \sigma\to 0\), where \( \Psi^{\sigma}_t \) denotes the flow of \( (t,x)\mapsto V^{\sigma}_t(x,\mu^{\sigma}_t) \).
Since all \( \Psi^{\sigma}_t\) with \( \sigma\in [0,1] \), have the common Lipschitz constant \( e^{MT} \), the convergence is, in fact, uniform.
More precisely, letting \( K=\spt\varrho_0 \), we can say that \( \Psi_t^{\sigma}\to \Psi_t \) in \( C(K;\mathbb{R}^d) \) for each \( t\in [0,T] \).
Finally, recall that \( |V^{\sigma}|\le M \) due to \( (A_1) \).
Hence \( t\mapsto \Psi^{\sigma}_t(x) \) is \( M \)-Lipschitz for all \( x\in K \) and \( \sigma\in [0,1] \).
Therefore, \( \Psi^{\sigma}\to \Psi \) in \( C\left([0,T];C(K;\mathbb{R}^d)\right) \).
In particular, taking \( r \) from Lemma~\ref{lem:common_time}, we obtain
\
\begin{equation}
  \label{eq:FF}
|\Psi^{\sigma}_t(x) - \Psi_t(x)|\le r\quad \forall t\in [0,T]\quad \forall x\in K,
\end{equation}
if \(\sigma\) is sufficiently small.
Now, by Lemma~\ref{lem:common_time}, each curve \( t\mapsto \Psi^{\sigma}_t(x) \) satisfying~\eqref{eq:FF}, crosses \( \omega \) and thus \( (A_2) \) holds for all small \( \sigma>0 \).
\end{proof}

We now prove the main theorem under Assumption \( (A_{1,2}) \).

\begin{proofof}{Theorem~\ref{thm:main2}}
Before proceeding, we introduce some notation.
Denote by \( \mu_t[u,\vartheta] \) the solution of~\eqref{eq:conteq} corresponding to the admissible control \( u \) and the initial measure \( \vartheta \).
Similarly, \( \mu^{\sigma}_t[u,\vartheta] \) stands for the solution of~\eqref{eq:conteq}, with \( V \) replaced by \( V^{\sigma} \), which corresponds to the control \( u \) and the initial measure \( \vartheta \).

We know that \( V^{\sigma} \) satisfies the assumptions \( (A_{1,2,3}) \) for all small \( \sigma>0 \) and thus the corresponding continuity equation can be stabilized around a reference trajectory.
More precisely, for any perturbation \( \varrho\in \Pi_{\varepsilon_0,L}(\varrho_0) \), there exist \( \varepsilon_*,C>0 \) depending on \( M\), \( T \), \( L \), \( \kappa \), \( r \), \( \varepsilon_0 \), \( \spt(\varrho_0) \) only and such that, for all \( \varepsilon\in [0,\varepsilon_{*}] \), one can find an admissible control \( u^{\sigma,\varepsilon} \) satisfying
\[
\mathcal{W}_2\left(\mu_T^{\sigma}[u^{\sigma,\varepsilon},\varrho_{\varepsilon}],\mu_T^{\sigma}[0,\varrho_0]\right)\le C\varepsilon^{1+\kappa}.
\]
On the other hand,
\begin{align*}
  \mathcal{W}_2\left(\mu_T[u^{\sigma,\varepsilon},\varrho_{\varepsilon}],\mu_T[0,\varrho_0]\right)
  &\le
    \mathcal{W}_2\left(\mu_T[u^{\sigma,\varepsilon},\varrho_{\varepsilon}],\mu^\sigma_T[u^{\sigma,\varepsilon},\varrho_{\varepsilon}]\right)\\
  &+\mathcal{W}_2\left(\mu^{\sigma}_T[u^{\sigma,\varepsilon},\varrho_{\varepsilon}],\mu^\sigma_T[0,\varrho_0]\right)\\
  &+
\mathcal{W}_2\left(\mu^{\sigma}_T[0,\varrho_{0}],\mu_T[0,\varrho_{0}]\right).
\end{align*}
We know that the second term in the right-hand side is smaller than \( C\varepsilon^{1+\kappa} \).
Moreover, the Lipschitz constants of \( V^{\sigma} \) and \( u^{\sigma,\varepsilon} \) do not depend on \(\sigma\): by construction, they depend on \( M\), \( T \), \( L \), \( \varepsilon_0 \), \( \kappa \), \( r \), \( \spt(\varrho_0) \) only.
Therefore, by~\cite[Theorem 1]{pogodaevNonlocalBalanceEquations2022}, the first and the third terms converge to \( 0 \) as \( \sigma\to 0 \).
In particular, for each \( \varepsilon \) one can find \( \sigma=\sigma(\varepsilon) \) such that each of these terms is smaller than \( \varepsilon^{1+\kappa} \).
By letting \( u^{\varepsilon} := u^{\sigma(\varepsilon),\varepsilon} \), we obtain
\[
\mathcal{W}_2\left(\mu_T[u^{\varepsilon},\varrho_{\varepsilon}],\mu_T[0,\varrho_0]\right)\le (C+2)\varepsilon^{1+\kappa}.
\]
This completes the proof of Theorem~\ref{thm:main2} under the assumptions \( (A_{1,2}) \).
\end{proofof}

\subsection{Proof of Corollary~\ref{cor:main}}
\label{subsec:corollary}

In this section, we prove Corollary~\ref{cor:main}.
Our first goal is to understand how \( \lip(X_t) \) depends on \( \varepsilon \).
In view of the results of Section~\ref{subsec:A3}, we may assume, without loss of generality, that \( (A_3) \) holds.
Now, on \( \dom(DX_t) \), the derivative \( DX_t\) has the representation~\eqref{eq:DX_t}, i.e., \( DX_t=Q_tP_t \).
By arguing as in Step 3 from the proof of Lemma~\ref{lem:Xinv}, we deduce that the matrix \( Q_t \) satisfies \( \|Q_t\|\le O(1) \) on \( \dom(DX_t) \).
The inequalities~\eqref{eq:S_t_est} and~\eqref{eq:intalpha} allow us to estimate the second matrix \( P_t \):
\[
\|P_t\|\le 1 + O(-\log \varepsilon) + O(\varepsilon\log^2\varepsilon)=O(-\log \varepsilon) , \quad \forall \varepsilon\in (0,\varepsilon_1),\quad\forall t\in [0,T],
\]
where \( \varepsilon_1 \) is the constant from Lemma~\ref{lem:Xdiff}.
We then obtain
\begin{equation*}
\|DX_{t}\|_{\infty}\le O(-\log \varepsilon), \quad \forall \varepsilon\in (0,\varepsilon_1),\quad\forall t\in [0,T].
\end{equation*}
Since for any Lipschitz function \( f\colon \mathbb{R}^d\to \mathbb{R}^d \) it holds \( \lip(f)\le \|Df\|_{\infty} \), we get
\begin{equation}
  \label{eq:lipX}
\lip(X_t) \le O(-\log\varepsilon),\quad \forall \varepsilon\in (0,\varepsilon_1),\quad \forall t\in [0,T].
\end{equation}
Now, by the first inequality in~\eqref{eq:W2ineq}, it holds that
\begin{equation}
  \label{eq:keyestim}
  \mathcal{W}_2(X_{T\sharp}\nu_0,X_{T\sharp}\nu_1)\le O(-\log\varepsilon) \mathcal{W}_2(\nu_0,\nu_1),\quad \forall \nu_1,\nu_2\in \mathcal{P}_2(\mathbb{R}^{d}),\quad\forall \varepsilon\in (0,\varepsilon_1).
\end{equation}

We fix some \( \varepsilon\in (0,\varepsilon_{*}) \) and \( \kappa\in (0,\kappa_{*}) \), where \(\varepsilon_{*}\) and \(\kappa_{*}\) are the constants from Theorem~\ref{thm:main}, and let \( r(\varepsilon,\kappa)= -\varepsilon^{1+\kappa}/\log \varepsilon \).
By Theorem~\ref{thm:main}, any point of the set \( \Gamma_{\varepsilon}(\varrho_0) \) can be steered into the ball \( {\bf B}_{C\varepsilon^{1+\kappa}}(\mu_{T}) \) by a localized Lipschitz control.
Now, according to~\eqref{eq:keyestim}, there exists \( C_1>C \) such that any point of the open enlargement \({\bf B}_{r(\varepsilon,\kappa)} (\Gamma_{\varepsilon}(\varrho_0))\) can be steered into the ball \( {\bf B}_{C_1\varepsilon^{1+\kappa}}(\mu_T) \).
As before, the constant \( C_1 \) only depends on \( T \), \( M \), \( \omega \) and \( \spt(\varrho_0) \).
This observation completes the proof.

\subsection{Sharpness of the result}
\label{s-sharp}
In this section, we discuss the sharpness of the main results of this article, that are Theorem~\ref{thm:main} and Corollary \ref{cor:main}.


Recall that the sets of initial conditions \( \Gamma_{\varepsilon_{*}}(\varrho_0) \) and \( \Gamma^{\varepsilon_0,L}_{\varepsilon_{*}}(\varrho_0) \) considered before are completely defined by the corresponding perturbation classes.
In the first case, the perturbation class is composed of curves generated by flows of Lipschitz vector fields.
In the second case, it is composed of \emph{regular perturbations}.

Consider the general situation. We define
\[
\Gamma^{\Pi}_{\varepsilon_{*}}(\varrho_0) := \left\{\varrho_{\varepsilon}\text{ where } \varrho\in \Pi(\varrho_0),\; \varepsilon\in [0,\varepsilon_{*}]\right\},
\]
where \( \Pi(\varrho_0) \) is an arbitrary set of curves \( \varrho\colon [0,1]\to \mathcal{P}_c(\mathbb{R}^d) \) with the starting point \( \varrho_0 \).
Now, we can fix \( \Pi(\varrho_0) \) and ask whether the corresponding \( \Gamma^{\Pi}_{\varepsilon_{*}}(\varrho_0) \) can be \( \kappa \)-stabilized for some \( \kappa>0 \) and \( \varepsilon_{*}>0 \)?
In the following example, we show that such \( \Pi(\varrho_0) \) cannot be composed of all absolutely continuous curves originated from \( \varrho_0 \).

  \begin{example}

Fix \( d=1 \), \(V=0\), \(\omega=\mathbb R\), \(\varrho_0=\bm1_{[0,1]}\mathcal L^1\).
We define in \( \mathcal{P}_c(\mathbb{R}^d) \) a specific absolutely continuous curve $\varrho_t$, issuing from \( \varrho_0 \).
We set
\begin{equation}
  \label{eq:re}
        \varepsilon_{n} := \frac{1}{2^{n-1}},\quad
       \varrho_{\varepsilon_n} := \frac{1}{2^{n-1}}\sum_{k=1}^{2^{n-1}}\delta_{\frac{2k-1}{2^{n}}},\quad n\ge 1.
\end{equation}
     Note that \( \varrho_{\varepsilon_n} \) has \( 2^{n-1} \) atoms of mass \( 1/2^{n-1} \), and the distance between the closest atoms is \( 1/2^{n-1} \).     The optimal transport from \( \varrho_{\varepsilon_{n}} \) to \( \varrho_{\varepsilon_{n-1}} \) is given by merging two atoms of \( \varrho_{\varepsilon_{n}} \) into one. This completely describes the geodesic from \( \varrho_{\varepsilon_{n}} \) to \( \varrho_{\varepsilon_{n-1}} \), i.e. the curve realizing the optimal transport. The displacement of each atom is given by a distance \( 1/2^{n} \) (see Figure~\ref{fig:atoms}).
     Therefore,
     \[
       \mathcal{W}_2(\varrho_{\varepsilon_n},\varrho_{\varepsilon_{n-1}}) = \sqrt{2^{n-1}\cdot \frac{1}{2^{n-1}}\cdot\left(\frac{1}{2^{n}}\right)^2} = \frac{1}{2^{n}}.
     \]
     On the other hand, since the optimal transport map between \( \bm 1_{[a,b]} \mathcal{L}^1 \) and \( (b-a)\delta_{\frac{b-a}{2}} \) is \( \mathcal{T}\equiv \frac{b-a}{2} \), it holds
     \[
       \mathcal{W}_2(\varrho_{\varepsilon_n},\varrho_0) =\left(\sum_{k=1}^{2^{n-1}}\int_{\frac{2k-2}{2^{n}}}^{\frac{2k}{2^n}}\left|x-\frac{2k-1}{2^{n}}\right|^{2}\,d x\right)^{1/2}= \frac{1}{2^{n}\sqrt 3}=\frac{\varepsilon_n}{2\sqrt{3}}.
     \]
     We now define a curve \( \varrho\colon [0,1]\to \mathcal{P}_c(\mathbb{R})\ \) that satisfies $\varrho(0)=\varrho_0$, as well as $\varrho(\varepsilon_n)=\varrho_{\varepsilon_n}$, where \( \varrho_{\varepsilon_n} \) are defined by~\eqref{eq:re}. For each $t\in(\varepsilon_n,\varepsilon_{n-1})$, choose the reparametrized geodesic from $\varrho_{\varepsilon_n}$ to $\varrho_{\varepsilon_{n-1}}$ described above.  We stress that the curve $\varrho_t$ is then absolutely continuous as a function of its parameter, but that its points are never absolutely continuous measures, except for $t=0$.

     \begin{figure}[htb]
  \begin{center}
    \includegraphics[width=0.6\textwidth]{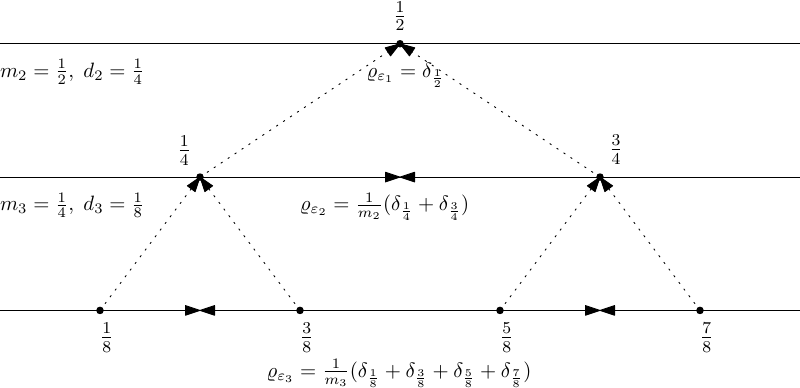}
\caption{Optimal transport between \( \varrho_{\varepsilon_3} \) and \( \varrho_{\varepsilon_2} \), then between \( \varrho_{\varepsilon_2} \) and \( \varrho_{\varepsilon_1} \). Here \( m_n=1/2^{n-1} \) is the mass of each atom of \( \varrho_{\varepsilon_n} \), \( d_n=1/2^n \) is the distance that each atom passes during the optimal mass transfer between \( \varrho_{\varepsilon_n} \) and \( \varrho_{\varepsilon_{n-1}} \).}
\label{fig:atoms}
\end{center}
\end{figure}

     In this way, the absolutely continuous curve $\varrho$ has length
     \[
       L(\varrho) = \sum_{n=1}^{\infty}\frac{1}{2^n} = 1 > \mathcal{W}_2(\varrho_0,\varrho_1)=\frac{1}{2\sqrt 3}.
     \]
    Lipschitz controls can only steer \(\varrho_{\varepsilon_{n}}\) to a measure \(\vartheta_n\) with the same atoms, but located in different positions.
    Such measures \( \vartheta_n \) satisfy \(\mathcal W_2(\varrho_0,\vartheta_n)\ge \frac{\varepsilon_n}{2\sqrt 3}\).
    Therefore, we have \( \lim_{n\to \infty} \varepsilon_n^{-1-\kappa}\mathcal{W}_2(\vartheta_n,\varrho_0) = +\infty  \) for any \( \kappa>0 \).
    Hence, \( \Gamma^{\Pi}_{\varepsilon_{*}}(\varrho_0) \) cannot be \( \kappa \)-stabilized, whatever \( \varepsilon_{*}>0 \) and \( \kappa>0 \) we choose.
    Similarily, any Wasserstein ball ${\bf B}_\eps(\varrho_0)$ can never be $\kappa$-stabilized, since it contains the measures $\varrho_{\varepsilon_n}$ for all sufficiently large $n$.
\end{example}

\subsection*{Acknowledgments:} We thank N. Gigli for helpful discussions about the topology of Wasserstein spaces.

\printbibliography

\end{document}